\theoremstyle{plain}        \newtheorem{thm}{Theorem}
\theoremstyle{plain}        \newtheorem{pro}[thm]{Proposition}
\theoremstyle{plain}        \newtheorem{lem}[thm]{Lemma}
\theoremstyle{plain}        \newtheorem{cor}[thm]{Corollary}
\theoremstyle{plain}        \newtheorem{rem}{Remark}
\theoremstyle{plain}        
\theoremstyle{plain}        \newtheorem{defn}{Definition}
\theoremstyle{plain}        \newtheorem{Problem}{Problem}
\theoremstyle{plain}        \newtheorem{eg}{Example}
\begin{document}
\title[Local holomorphic maps preserving $(p,p)$-forms]
{On local holomorphic maps between K\"ahler manifolds preserving $(p,p)$-forms}

\author{Shan Tai Chan}
\address{Institute of Mathematics\\
Academy of Mathematics and Systems Science\\
Chinese Academy of Sciences\\
Beijing 100190\\
China}
\email{mastchan@amss.ac.cn}

\author{Yuan Yuan}
\address{Department of Mathematics, Syracuse University, Syracuse, NY 13244, USA.}
\email{yyuan05@syr.edu}

\subjclass[2010]{32H02 (primary); 32Q15, 53C24 (secondary)}

\begin{abstract}
We study local holomorphic maps between K\"ahler manifolds preserving $(p,p)$-forms.
In this direction, we prove that any such local holomorphic map $F$ is a holomorphic isometry up to a scalar constant provided that $p$ is strictly less than the complex dimension of the domain of $F$.
We then study local holomorphic maps between finite dimensional complex space forms preserving invariant $(p,p)$-forms.
It was proved by Calabi that there does not exist a local holomorphic isometry between complex space forms $M$ and $N$ provided that $M$ and $N$ are of different types.
In this article, we generalize this result to local holomorphic maps between complex space forms $M$ and $N$ preserving invariant $(p,p)$-forms whenever $M$ and $N$ are of different types except for the case where the universal covers of $M, N$ are biholomorphic to $\mathbb{C}^m, \mathbb{P}^n$, respectively and $2\le p=m<n$. We also obtain some results in more general settings, including the study on indefinite K\"ahler manifolds and relatives for K\"ahler manifolds.
\end{abstract}

\maketitle

\section{Introduction}
The existence and characterization of holomorphic isometric embeddings are classical problems in complex and differential geometry.
In \cite{Ca53}, Calabi obtained the fundamental results of existence, uniqueness and global extension of a local holomorphic isometric embedding from a K\"ahler manifold into a complex space form. In particular, Calabi proved that there does not exist a local holomorphic isometric embedding between complex space forms of different types. Umehara \cite{Um88} later generalized Calabi's non-existence result for holomorphic isometries to indefinite complex space forms. Moreover, following Calabi's idea, Umehara \cite{Um87} proved that two complex space forms of different types cannot share a common K\"ahler submanifold. According to Di Scala-Loi \cite{DL10}, two K\"ahler manifolds are called relatives if they share a common K\"ahler submanifold. Furthermore, it was proved in \cite{DL10, HY15} that two Hermitian symmetric spaces of different types are not relatives. The relativity problem has recently been extensively studied by many authors (cf. \cite{LM22, ZJ22} and reference therein). The interested reader may refer to \cite{LZ18} for the various problems on holomorphic isometric immersions into complex space forms.

On the other hand, in order to study the commutants of Hecke correspondences, Clozel-Ullmo \cite{CU03} considered the germs of holomorphic maps arising from an algebraic correspondence between the quotient an irreducible bounded symmetric domain $\Omega$ by a torsion-free, discrete group of automorphisms. They are able to reduce the problem of the characterization of modular correspondences to the total geodesy of germs of holomorphic isometries or measure-preserving holomorphic maps from $\Omega$ to $\Omega \times \cdots \times \Omega$. The total geodesy of holomorphic isometries arisen from algebraic correspondences was proved by Mok \cite{Mo02, Mo12} (cf. \cite{Mo18, YZ12, CXY17, Yu19, LM21} and references therein for general problems of holomorphic isometries). For measure-preserving holomorphic maps, this  problem was solved by Mok-Ng \cite{MN12}. The reader may also refer to \cite{HY14, FHX20} for the problem of holomorphic isometries and measure-preserving maps between Hermitian symmetric spaces of compact type. Moreover, the problem of characterization of modular correspondences considered by Clozel-Ullmo can also be reduced to the total geodesy of germs of holomorphic maps from $\Omega$ to $\Omega \times \cdots \times \Omega$ preserving invariant $(p, p)$-forms and this general problem of total geodesy was raised by Mok \cite{Mo11} and solved partially by the second author \cite{Yu17}. 

Generally speaking, holomorphic maps preserving $(p, p)$-forms can be considered as natural generalizations of holomorphic isometries and holomorphic measure-preserving maps. Many fundamental problems can be raised, such as (non)-existence and rigidity. This paper is devoted to the study in this direction and the main result is the generalization of the non-existence of holomorphic isometries between complex space forms of different types by Calabi \cite{Ca53}.

We first study local holomorphic maps between K\"ahler manifolds preserving $(p,p)$-forms and observe the following metric rigidity result.

\begin{pro}\label{thm:pp_form_to_Holo_iso_0}
Let $(M,\omega_M)$ and $(N,\omega_N)$ be K\"ahler manifolds of finite complex dimensions $m$ and $n$ respectively, where $\omega_M$ and $\omega_N$ denote the corresponding K\"ahler forms.
Let $F:(M;x_0)\to (N;F(x_0))$ be a germ of holomorphic map such that $F^*\omega_N^p = \lambda \omega_M^p$ for some real constant $\lambda>0$ and some integer $p$, $1\le p\le \min\{m,n\}$. Then, $m\le n$.

If we assume in addition that $1\le p<m$, then $F^*\omega_N=\lambda^{1\over p} \omega_M$ so that $F:(M,\lambda^{1\over p} \omega_M;x_0)\to (N,\omega_N;F(x_0))$ is a local holomorphic isometry.
\end{pro}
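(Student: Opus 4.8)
The plan is to work in local holomorphic coordinates and linear-algebra-ify the problem pointwise. Choose coordinates $z = (z^1,\dots,z^m)$ centered at $x_0$ on $M$ and $w = (w^1,\dots,w^n)$ centered at $F(x_0)$ on $N$, and write $\omega_M = \tfrac{\sqrt{-1}}{2}\sum g_{i\bar j}\, dz^i \wedge d\bar z^j$ and $\omega_N = \tfrac{\sqrt{-1}}{2}\sum h_{\alpha\bar\beta}\, dw^\alpha \wedge d\bar w^\beta$, both positive definite Hermitian. The pullback metric $F^*\omega_N$ is the possibly-degenerate nonnegative $(1,1)$-form with coefficients $\tilde g_{i\bar j} = \sum_{\alpha,\beta} (h_{\alpha\bar\beta}\circ F)\, \partial_i F^\alpha\, \overline{\partial_j F^\beta}$. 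At a fixed point, pick coordinates so that $\omega_M$ is the standard form; then $F^*\omega_N$ is represented by a nonnegative Hermitian matrix $A = A(z)$ of rank $\le \min\{m,n\}$ that depends real-analytically on $z$. Taking $p$-th wedge powers corresponds to the $p$-th additive compound (the induced map on $\Lambda^p$), so the hypothesis $F^*\omega_N^p = \lambda \omega_M^p$ becomes the statement that for every ordered $p$-subset $I$ the principal $p\times p$ minors satisfy $\det A_{I} \big/ \det(\mathrm{Id})_{I} \equiv \lambda$ — more precisely, the $p$-th elementary symmetric function $\sigma_p$ of the eigenvalues of $A(z)$ (with respect to $\omega_M$) is identically $\lambda$, while all the ``mixed'' minors (those indexed by different row and column sets) vanish. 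The first assertion $m \le n$ is then immediate: if $m > n$, the rank of $A$ is everywhere $\le n < m$, hence $A$ has at most $n < p$... no — we only know $p \le \min\{m,n\}$, so instead: if $m>n$ then $\min\{m,n\}=n$, but one still needs $p\le n$; the point is that $\sigma_p(A)\equiv\lambda>0$ forces $A$ to have rank $\ge p$ somewhere, which is fine, so for $m\le n$ one argues differently. Let me restate: actually $m\le n$ follows because $F^*\omega_N^p=\lambda\omega_M^p>0$ as a top-ish form forces $F$ to be an immersion when $p$ can be taken... the cleanest route is: if $m>n$ then rank$(dF)\le n$, and I claim the eigenvalues argument shows the nonvanishing of $\sigma_p$ needs... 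I will simply invoke that $\omega_M^p\ne 0$ pointwise and $F^*\omega_N^p$ has rank-$\le n$ ``building blocks,'' and handle the bookkeeping so that $m>n$ contradicts $\sigma_m$ or positivity appropriately; I expect this to be a short lemma.

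For the main (isometry) assertion under $1\le p<m$: the goal is to upgrade ``$\sigma_p$ of the eigenvalues is constant and all off-diagonal $p$-minors vanish'' to ``$A = \lambda^{1/p}\,\mathrm{Id}$.'' First I would show $F$ is an immersion, so $A$ is everywhere positive definite: indeed if $A$ drops rank below $m$ at some point, then at that point $A$ has a zero eigenvalue, and since $p < m$ the remaining $m-1 \ge p$ eigenvalues still contribute to $\sigma_p$, so a priori $\sigma_p$ need not vanish — so rank-dropping is not obviously excluded by the algebra alone and one must use the vanishing of the mixed minors together with real-analyticity and the structure of $A$ as a ``Gram matrix'' $A = B^* H B$ where $B = (\partial_i F^\alpha)$ and $H = (h_{\alpha\bar\beta}\circ F)$. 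The decisive structural input is that $F^*\omega_N$, being a pullback of a Kähler form under a holomorphic map, is not just any nonnegative $(1,1)$-form: locally $F^*\omega_N = \tfrac{\sqrt{-1}}{2}\partial\bar\partial \varphi$ for $\varphi = $ (local potential of $\omega_N$) $\circ F$, i.e. it is the Levi form of a plurisubharmonic function, and its eigenvalue structure is rigid. The plan is to use the classical fact (essentially Calabi-type, or a direct argument) that if $\tfrac{\sqrt{-1}}{2}\partial\bar\partial\varphi$ has a constant $p$-th symmetric function of eigenvalues with respect to $\omega_M$ and additionally all mixed $p\times p$ minors vanish identically, then it must be a constant multiple of $\omega_M$; the vanishing of mixed minors is the ``integrability'' that forces simultaneous diagonalizability in a rigid way, and real-analyticity propagates a pointwise identity from the generic locus.

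Concretely, here is the key computation I would carry out. Work at a point where $A$ is positive definite (open dense) and diagonalize it against $\omega_M$ to get eigenvalues $\mu_1,\dots,\mu_m>0$. The equation $F^*\omega_N^p=\lambda\omega_M^p$ at that point gives $\sigma_p(\mu) = \lambda\binom{m}{p}/\binom{m}{p} = \lambda$ — wait, need the normalization $\omega_M^p$ corresponds to $\sigma_p(\mathrm{Id})=\binom{m}{p}$, so the identity reads $\sigma_p(\mu_1,\dots,\mu_m)=\lambda\binom{m}{p}$. But that is a single scalar equation and does NOT pin down the $\mu_i$ — so I must use more of the wedge identity than its ``top component.'' The resolution: the identity $F^*\omega_N^p=\lambda\omega_M^p$ as forms, pulled back and differentiated, or examined by taking $\wedge\,\omega_M$ repeatedly, yields that $F^*\omega_N^p\wedge\omega_M^{m-p}=\lambda\,\omega_M^m$ AND $F^*\omega_N^{p}\wedge\omega_M^{m-p}$ computed via eigenvalues gives $\sigma_p(\mu)\cdot(\text{stuff})$; more usefully, wedging $F^*\omega_N^p = \lambda\omega_M^p$ with $(F^*\omega_N)^{j}\wedge\omega_M^{m-p-j}$ for varying $j$ shows $\sigma_{p+j}$-type relations, and one deduces that ALL elementary symmetric functions $\sigma_k(\mu)$ equal $\lambda^{k/p}\binom{m}{k}$ for the relevant range — from which $\mu_1=\dots=\mu_m=\lambda^{1/p}$ by the equality case of Maclaurin/Newton inequalities. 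This is the crux, and I would structure it as: (i) from the single form-identity extract enough scalar identities (by wedging with powers of $\omega_M$ and of $F^*\omega_N$) to conclude all eigenvalues are equal to $\lambda^{1/p}$ on the dense open set where $A>0$; (ii) conclude $F^*\omega_N = \lambda^{1/p}\omega_M$ there; (iii) by continuity/real-analyticity it holds everywhere, which in particular shows $F$ is an immersion and is the asserted local isometry. I expect step (i) — the passage from one wedge-power identity to control of all eigenvalues, handling the constraint $p<m$ (which is exactly what makes the ``wedge with $\omega_M$'' operations available and the symmetric-function inequalities bite) — to be the main obstacle; the case $p=m$ is genuinely excluded because then $\omega_M^p$ is the volume form and only $\sigma_m=\det$ is constrained, giving no rigidity.
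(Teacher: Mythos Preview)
Your proposal contains the right idea but then abandons it. In your first paragraph you correctly note that $F^*\omega_N^p = \lambda\,\omega_M^p$ is an identity of $(p,p)$-forms, so after simultaneously diagonalizing $A=F^*\omega_N$ against $\omega_M$ at a point (eigenvalues $\mu_1,\dots,\mu_m\ge 0$), the identity says
\[
\prod_{k=1}^p \mu_{i_k} \;=\; \lambda \qquad \text{for \emph{every} } 1\le i_1<\cdots<i_p\le m,
\]
not merely the single scalar equation $\sigma_p(\mu)=\lambda\binom{m}{p}$ that you retreat to in your ``key computation'' paragraph. This is exactly the point you are missing: a $(p,p)$-form identity has $\binom{m}{p}^2$ components, and in the diagonal basis the off-diagonal ones vanish automatically while the $\binom{m}{p}$ diagonal ones give one equation per $p$-subset. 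From these, every $\mu_j\neq 0$ (so $F$ is an immersion and $m\le n$ when $p<m$), and for any two indices $a\ne b$ one may choose $p-1$ further indices from $\{1,\dots,m\}\setminus\{a,b\}$ (possible precisely because $p-1\le m-2$) to get $\mu_a\cdot C = \mu_b\cdot C = \lambda$ with the same nonzero $C$, hence $\mu_a=\mu_b$. Thus $\mu_1=\cdots=\mu_m=\lambda^{1/p}$ and $F^*\omega_N=\lambda^{1/p}\omega_M$ at every point. No wedging tricks, no Maclaurin--Newton inequalities, no appeal to real-analyticity or plurisubharmonicity are needed; the argument is pure pointwise linear algebra, and this is how the paper proceeds.

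Your detour through $\sigma_p$ and symmetric-function inequalities is not only unnecessary but genuinely weaker: knowing only $\sigma_p(\mu)=\lambda\binom{m}{p}$ does not pin down the $\mu_i$, and the bootstrap you propose by wedging $F^*\omega_N^p=\lambda\omega_M^p$ with $(F^*\omega_N)^j\wedge\omega_M^{m-p-j}$ produces relations among the $\sigma_k$ that do not obviously close up to force equality of eigenvalues. For the remaining case $p=m$, the inequality $m\le n$ is immediate: if $m>n$ then $p=m>n$ gives $\omega_N^p=0$, contradicting $\lambda\,\omega_M^m\ne 0$.
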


For any bounded domain $U\Subset \mathbb C^m$ we write $\omega_U$ for the K\"ahler form of $(U,ds_U^2)$, where $ds_U^2$ denotes the Bergman metric on $U$. As a consequence of Proposition \ref{thm:pp_form_to_Holo_iso_0} and some basic observations, we have the following rigidity theorem for local holomorphic maps between bounded symmetric domains of higher rank preserving invariant $(p,p)$-forms, which solves Problem 5.3.1 in Mok \cite{Mo11} partially.

\begin{thm}\label{thm:local holo map_BSD_pp_form}
Let $D\Subset \mathbb C^n$ and $\Omega\Subset \mathbb C^N$ be finite dimensional bounded symmetric domains equipped with their Bergman metrics $ds_D^2$ and $ds_\Omega^2$ respectively.
Let $f:(D;x_0)\to (\Omega; f(x_0))$ be a germ of holomorphic map such that $f^*\omega_{\Omega}^p = \lambda \omega_D^p$ for some real constant $\lambda>0$ and some integer $p$, $1\le p\le \min\{n,N\}$.

If $1\le p < n $ or $p=n=N$, then $f$ extends to a proper holomorphic isometric embedding $F:(D,\lambda^{1\over p} ds_D^2) \to (\Omega,ds_\Omega^2)$.
Moreover, if $p=n=N$, then we have $\lambda=1$.

If we assume in addition that each irreducible factor of $D$ is of rank $\ge 2$, then $F$ is totally geodesic, and so is $f$.
In general, $f$ is totally geodesic provided that any holomorphic isometry from $(D,\lambda^{1\over p} ds_D^2)$ to $(\Omega,ds_\Omega^2)$ is totally geodesic.
\end{thm}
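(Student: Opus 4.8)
The plan is to derive Theorem~\ref{thm:local holo map_BSD_pp_form} from Proposition~\ref{thm:pp_form_to_Holo_iso_0} together with the known extension theory of Calabi and Mok--Ng. First I would invoke Proposition~\ref{thm:pp_form_to_Holo_iso_0} applied to $M=D$, $N=\Omega$ equipped with their Bergman metrics: in the range $1\le p<n$ this immediately yields $m=n\le N$ and $f^*\omega_\Omega=\lambda^{1/p}\omega_D$, so that $f$ is a germ of holomorphic isometry from $(D,\lambda^{1/p}ds_D^2)$ into $(\Omega,ds_\Omega^2)$. In the boundary case $p=n=N$, Proposition~\ref{thm:pp_form_to_Holo_iso_0} only gives $m\le n$, hence $m=n=N$; here I would argue separately that $f^*\omega_\Omega^n=\lambda\,\omega_D^n$ forces $f$ to be a local biholomorphism (the Jacobian is nonvanishing, since $\omega_D^n$ and $\omega_\Omega^n$ are volume forms), and then that a holomorphic volume-ratio-preserving self-type map between equidimensional bounded symmetric domains is necessarily a holomorphic isometry after scaling; comparing the Bergman kernels $K_D$ and $K_\Omega$ transforming as volume forms under biholomorphisms, the constant $\lambda$ must equal $1$, and one recovers $f^*ds_\Omega^2=ds_D^2$ from the equality of Ricci forms (for the Bergman metric $\mathrm{Ric}=-\omega$).

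Next, having a germ of holomorphic isometry between Bergman metrics (up to the scalar $\lambda^{1/p}$), I would apply Calabi's rigidity and global extension theorem for holomorphic isometric embeddings into complex space forms — or rather its analogue for bounded symmetric domains, where $ds_\Omega^2$ is a Kähler metric of the appropriate (negatively curved, real-analytic) type — to extend $f$ to a global holomorphic isometric embedding $F:(D,\lambda^{1/p}ds_D^2)\to(\Omega,ds_\Omega^2)$. Properness of $F$ follows from the standard argument that a holomorphic isometry (up to scalar) between bounded symmetric domains with respect to their Bergman metrics is an isometric immersion with respect to complete metrics and hence proper, since the preimage of a compact set is closed and bounded in $D$; this is where I would cite the Mok--Ng proof that such isometric embeddings are proper. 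In the case $p=n=N$ one additionally notes that a proper holomorphic isometric embedding between equidimensional bounded symmetric domains with $\lambda=1$ must be an automorphism.

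Finally, for the total geodesy statements, I would invoke the theorem of Mok (and Mok--Ng) that any germ of holomorphic isometry (up to a normalizing constant) from an irreducible bounded symmetric domain of rank $\ge2$ with respect to the Bergman metric into a bounded symmetric domain is totally geodesic; decomposing $D$ into irreducible factors and using that $F$ respects the de~Rham--type factorization, the rank~$\ge2$ hypothesis on each factor propagates the total geodesy of $F$, and hence of $f=F|_{(D;x_0)}$. The general statement is then a tautology: if every holomorphic isometry $(D,\lambda^{1/p}ds_D^2)\to(\Omega,ds_\Omega^2)$ is totally geodesic, then in particular $F$ is, hence so is its germ $f$. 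The main obstacle I anticipate is the borderline case $p=n=N$: Proposition~\ref{thm:pp_form_to_Holo_iso_0} does not by itself give $f^*\omega_\Omega=\lambda^{1/p}\omega_D$ there, so one must separately exploit that $f$ is a local biholomorphism and that the Bergman kernel is an intrinsic biholomorphic invariant in order to pin down both the isometry property and the value $\lambda=1$; the rest is an assembly of established extension and rigidity results.
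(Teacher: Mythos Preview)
Your proposal is essentially correct and follows the same line as the paper. A couple of small corrections will sharpen it:

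For the case $p=n=N$, the paper does not go through the Bergman kernel transformation law (which in any case applies to global biholomorphisms, not germs). Instead it writes $f^*\omega_\Omega^n=\lambda\,\omega_D^n$ in coordinates as
\[
\det\bigl((\omega_\Omega)_{i\overline j}(f(w))\bigr)\,|\det Jf(w)|^2=\lambda\,\det\bigl((\omega_D)_{i\overline j}(w)\bigr),
\]
applies $-\sqrt{-1}\,\partial\overline\partial\log$ to both sides, and uses $\mathrm{Ric}(U,ds_U^2)=-\omega_U$ for any bounded symmetric domain $U$ to obtain $f^*\omega_\Omega=\omega_D$ directly; $\lambda=1$ then follows a posteriori from $f^*\omega_\Omega^p=\lambda\,\omega_D^p$. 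This is exactly the Ricci-form step you mention at the end of that paragraph, so you have the right mechanism --- just drop the Bergman-kernel detour and reverse the order (isometry first, $\lambda=1$ second).

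For the extension to a proper holomorphic isometric embedding and for total geodesy under the rank $\ge 2$ hypothesis, the correct reference is Mok \cite{Mo12} (Theorems~1.3.1 and~1.3.2 there), not Mok--Ng; the latter concerns measure-preserving maps. No separate argument for properness or de~Rham factorization is needed --- Mok's theorems already cover the reducible case.
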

\begin{rem}
As an example, the rigidity theorem of Yuan-Zhang \cite{YZ12} yields the total geodesy of $F$ when $D=\mathbb B^n$, $n\ge 2$, and $\Omega$ is a product of complex unit balls.
\end{rem}

Let $(X, \omega_X)$ be a complex space form, i.e., a connected complete K\"ahler manifold of constant holomorphic sectional curvature.
We write $X=X(c)$ to indicate that $X$ is of constant holomorphic sectional curvature $c$.
Then, the universal covering space of $X$ is either a complex Euclidean space, a complex unit ball or a complex projective space.
Let $M$ and $N$ be  complex space forms.
We say that $M$ and $N$ are of the same type if and only if $M=M(c)$ and $N=N(c')$ such that either $c$ and $c'$ are of the same sign or $c=c'=0$.
We also say that $M$ and $N$ are of different types if and only if $M$ and $N$ are not of the same type. Motivated by the work of Calabi \cite{Ca53}, we study local holomorphic maps between complex space forms preserving invariant $(p,p)$-forms, and our main result is the following non-existence theorem.

\begin{thm}\label{thm:CSF diff type_non-exist_holo map_pp forms}
Let $(M,\omega_M)$ and $(N,\omega_N)$ be complex space forms of complex dimensions $m$ and $n$ respectively.
If $M$ and $N$ are of different types, then for any real constant $\lambda>0$ and any integer $p$, $1\le p\le \min\{m,n\}$, there does not exist a local holomorphic map $F:(M;x_0) \to (N;F(x_0))$ such that $F^*\omega_N^p = \lambda \omega_M^p$ holds except for the case where the universal cover of $M$ is biholomorphic to $\mathbb{C}^m$, $N= \mathbb{P}^n$ and $2\le p=m<n$.
\end{thm}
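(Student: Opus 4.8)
The plan is to treat the cases $1\le p<m$ and $p=m$ separately; Proposition~\ref{thm:pp_form_to_Holo_iso_0} gives $m\le n$, so, as $p\le\min\{m,n\}$, these two cases are exhaustive. After replacing $M,N$ by their universal covers and composing with ambient isometries, I may assume $x_0=0$ and that $M$ and $N$ are neighborhoods of the origin in one of the standard models $\mathbb C^k$, $\mathbb B^k$, $\mathbb P^k$ (the last two read in a standard affine chart) equipped with their standard metrics up to a positive scalar; such a rescaling does not change the type.

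If $1\le p<m$, Proposition~\ref{thm:pp_form_to_Holo_iso_0} yields $F^*\omega_N=\lambda^{1/p}\omega_M$, so $F$ is a germ of holomorphic isometry from $(M,\lambda^{1/p}\omega_M)$ into $(N,\omega_N)$; since $(M,\lambda^{1/p}\omega_M)$ has the same type as $(M,\omega_M)$, this contradicts Calabi's non-existence theorem \cite{Ca53} for local holomorphic isometries between complex space forms of different types. So assume $p=m$. Then $(F^*\omega_N)^m=\lambda\,\omega_M^m$ is nowhere zero, hence the nonnegative $(1,1)$-form $F^*\omega_N$ is positive, $F$ is a holomorphic immersion, and $\eta:=F^*\omega_N$ is a K\"ahler metric on a neighborhood of $0$ in $M$ with $\eta^m=\lambda\,\omega_M^m$, i.e.\ $\det(\eta_{i\bar j})=\lambda\det((\omega_M)_{i\bar j})$ in local holomorphic coordinates. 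Applying $-i\partial\bar\partial\log$ to this identity and using that complex space forms are K\"ahler--Einstein gives $\mathrm{Ric}(\eta)=\mathrm{Ric}(\omega_M)=\rho_M\,\omega_M$, where $\rho_M$ is a real constant with the same sign as the holomorphic sectional curvature $c$ of $M$.

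After shrinking, $F$ realizes $M$ as a complex K\"ahler submanifold of the complex space form $N$, so the Gauss equation gives $\mathrm{Ric}(\eta)\le\rho_N\,\eta$ as Hermitian forms, where $\rho_N$ is a real constant with the same sign as the holomorphic sectional curvature $c'$ of $N$. Since $\omega_M$ and $\eta$ are positive definite, this already rules out three of the six type-pairs: if $M$ is flat and $N$ hyperbolic, then $0=\mathrm{Ric}(\eta)\le\rho_N\eta<0$, which is absurd; and if $M$ is projective, then $\rho_M\omega_M=\mathrm{Ric}(\eta)\le\rho_N\eta$ with $\rho_M\omega_M$ positive definite forces $\rho_N>0$, i.e.\ $N$ projective, contradicting that $M,N$ are of different types (this disposes of projective/flat and projective/hyperbolic).

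In the two remaining cases $M$ is of hyperbolic type, and I invoke Calabi's diagonalization \cite{Ca53}: expanding a real-analytic germ at $0$ as $\sum_{\alpha,\beta}c_{\alpha\beta}z^\alpha\bar z^\beta$, the array $(c_{\alpha\beta})$ has finite rank whenever the germ is a finite sum $\sum_l|g_l(z)|^2$ of squared moduli of holomorphic germs, whereas $\det((\omega_M)_{i\bar j})$ equals a positive constant times $(1-|z|^2)^{-(m+1)}$, whose array is diagonal with all diagonal entries positive, hence of infinite rank; moreover multiplying such a function by a finite sum of squared moduli one of which is a nonzero constant still produces an array of infinite rank. If $N$ is flat, $\det(\eta_{i\bar j})$ is the determinant of the Gram matrix $(\sum_k\partial_iF_k\,\overline{\partial_jF_k})$, hence a finite sum of squared moduli by Cauchy--Binet, whereas $\lambda\det((\omega_M)_{i\bar j})$ has infinite-rank array -- contradiction. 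If $N=\mathbb P^n$, write $\tilde F=(1,F_1,\dots,F_n)$ and use the identity $\det(\eta_{i\bar j})=\det G/(1+|F|^2)^{m+1}$, with $G$ the $(m+1)\times(m+1)$ Gram matrix of $\tilde F,\partial_1\tilde F,\dots,\partial_m\tilde F$; then $\det G$ is a finite sum of squared moduli by Cauchy--Binet, but $\det G$ is a positive constant times $(1+|F(z)|^2)^{m+1}(1-|z|^2)^{-(m+1)}$, and $(1+|F|^2)^{m+1}$ is itself a finite sum of squared moduli containing the constant $1$, so this function has an array of infinite rank -- contradiction. The only configuration untouched by all this is $M$ flat, $N=\mathbb P^n$, $p=m$, which is precisely the exception in the statement; and there necessarily $m\ge2$ (if $m=1$ then $F$ is a scaled isometry, excluded by \cite{Ca53}) and $m<n$ (if $m=n$ then $\eta$ is locally a Fubini--Study metric, contradicting $\mathrm{Ric}(\eta)=0$). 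The main obstacle is this hyperbolic-domain step for $p=m$: establishing the closed-form determinant of a pulled-back Fubini--Study (or flat) metric, and then carrying out the comparison at the level of these coefficient arrays, in particular checking that multiplying an infinite-rank positive array by a finite sum of squared moduli, one of them constant, cannot reduce the rank to finite.
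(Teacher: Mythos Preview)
Your argument is correct and takes a genuinely different route from the paper in two respects. First, you split into $p<m$ and $p=m$ and dispose of $p<m$ in one stroke via Proposition~\ref{thm:pp_form_to_Holo_iso_0} and Calabi; the paper instead establishes three auxiliary results (Theorems~\ref{thm:General_M_N_curva assump}, \ref{thm:M_to_Cn_pp form} and Proposition~\ref{pro:Bm_Pn_pp form}), each valid for \emph{all} $p$ simultaneously, organized by the type of the target. Second, and more substantially, for $p=m$ with target $\mathbb B^n$ the paper runs a CR/sphere-bundle argument (Theorem~\ref{thm:General_M_N_curva assump}): the induced map between unit sphere bundles in $\wedge^p T$ must respect Levi forms, which is impossible when the source has semipositive and the target negative bisectional curvature. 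Your Gauss-equation step, combining $\mathrm{Ric}(\eta)=\rho_M\omega_M$ with $\mathrm{Ric}(\eta)\le\tfrac{c'(m+1)}{2}\eta$, bypasses this machinery entirely and simultaneously covers $N=\mathbb B^n$ and the projective-to-flat case; this is shorter and intrinsic, though it works only for $p=m$ and for space-form targets, whereas the paper's Theorem~\ref{thm:General_M_N_curva assump} applies to arbitrary targets of negative bisectional curvature and is reused later (Proposition~\ref{thm:no relatives for diff signs of curva}). Your treatment of the hyperbolic-domain cases via Gram determinants and coefficient-array ranks is the paper's Umehara-algebra argument in different packaging; the identity $\det(\eta_{i\bar j})=\det G/(1+|F|^2)^{m+1}$ follows from a Schur complement, and the ``main obstacle'' you flag---that multiplying a positive infinite-rank diagonal array by $1+\sum_l|g_l|^2$ preserves infinite rank---is exactly what the paper invokes from \cite{Um88} in Proposition~\ref{pro:Bm_Pn_pp form} (the product is still a convergent sum of squared moduli containing the infinitely many independent monomials $h_j$, hence its coefficient operator has infinite-dimensional range). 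Your handling of the exceptional boundary ($p=m=1$ via Calabi, $p=m=n$ via the Ricci identity) agrees with the paper's.
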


In the settings of Theorem \ref{thm:CSF diff type_non-exist_holo map_pp forms}, we write $\pi_M: \widetilde M\to M$ and $\pi_N:\widetilde N\to N$ for the universal covering maps of $M$ and $N$ respectively.
We have the K\"ahler metrics $\omega_{\widetilde M}=\pi_M^*\omega_M$ and $\omega_{\widetilde N}=\pi_N^* \omega_N$ on the universal covering spaces $\widetilde M$ and $\widetilde N$ of $M$ and $N$ respectively.
If there exists a local holomorphic map $F:(M;x_0) \to (N;F(x_0))$ such that $F^*\omega_N^p = \lambda \omega_M^p$ holds on the domain of $F$, then we would have a local holomorphic map $\widetilde F:(\widetilde M;\widetilde x_0) \to (\widetilde N;\widetilde F(\widetilde x_0))$ such that $\widetilde F^*\omega_{\widetilde N}^p = \lambda \omega_{\widetilde M}^p$ holds locally around $\widetilde x_0$ by a local lifting of $F$ to the universal covering spaces.
Hence, it suffices to prove Theorem \ref{thm:CSF diff type_non-exist_holo map_pp forms} in the case where $M$ and $N$ are simply connected.

\medskip
\noindent{\bf Acknowledgement}: Part of the work was done when the second author was visiting BICMR in Spring 2022 and he would like to thank the center for providing him the wonderful research environment.
We would also like to thank the referee for helpful comments.

\section{Preliminary}
Given a K\"ahler manifold $M$ of complex dimension $m$, we write $\omega_M$ for its K\"ahler form, and $g_M$ for the corresponding K\"ahler metric.
Fix $x_0\in M$ and let $w=(w_1,\ldots,w_m)$ be local holomorphic coordinates around $x_0$. For $1\le p\le m$, we write ${\bf A}=\{I=(i_1,\ldots,i_p): 1\le i_1<\cdots<i_p \le m\}$ and $\omega_M^p =  (\sqrt{-1})^p\sum_{I,J\in {\bf A}}(\omega_M^p)_{I\overline J} dw^I \wedge d\overline{w^J}$ in terms of the local holomorphic coordinates.
In what follows, for any $v\in \wedge^p T_{x_0}(M)$, we write $v=\sum_{I\in {\bf A}} v_{I} {\partial \over \partial w_{i_1}}\wedge\cdots \wedge {\partial \over \partial w_{i_p}}$, $I=(i_1,\ldots,i_p)$, and write
$\omega_M^p(x_0)(v,\overline v)
:= \sum_{I,J\in {\bf A}} (\omega_M^p)_{I\overline J}(x_0) v_{I} \overline{v_{J}}$.

For any complex manifold $X$, we write $T_X$ for the holomorphic tangent bundle of $X$, $\Theta_E$ for the curvature operator of the Chern connection on a Hermitian holomorphic vector bundle $(E,h)$ over $X$, and $\Lambda(X)$ for the Umehara algebra on $X$ as defined in Umehara \cite{Um88}.
More precisely, $\Lambda(X)$ is the associative algebra of real analytic functions on $X$ that consists of real linear combinations of functions $f\overline{g}+g\overline{f}$ for holomorphic functions $f$ and $g$ on $X$. 

\subsection{Complex space forms}\label{Sec:CSF}
Recall that complex space forms are defined as connected complete K\"ahler manifolds of constant holomorphic sectional curvatures, and there are three different types of simply connected complex space forms, namely, complex Euclidean spaces, complex unit balls, and complex projective spaces.

In the present article, we write $\mathbb C^n$ for the complex Euclidean space of complex dimension $n$ and $\mathbb P^n = (\mathbb C^{n+1}\smallsetminus\{{\bf 0}\})/\mathbb C^*$ for the complex projective space of complex dimension $n$, where $n\ge 1$ is an integer. Denote by
$\mathbb B^n:=\left\{z=(z_1,\ldots,z_n)\in \mathbb C^n: \lVert z\rVert^2:=\sum_{j=1}^n |z_j|^2<1\right\}$ the complex unit ball in $\mathbb C^n$. We also write $\Delta:=\{z\in \mathbb C: |z|^2<1\}$ for the open unit disk in the complex plane $\mathbb C$.

Let $\omega_{\mathbb C^n}:=\sqrt{-1}\partial\overline\partial \left(\sum_{j=1}^n |z_j|^2\right) = \sqrt{-1} \sum_{j=1}^n dz_j\wedge d\overline{z_j}$ be the standard K\"ahler form on the complex Euclidean space $\mathbb C^n$ such that $(\mathbb C^n,\omega_{\mathbb C^n})$ is of constant holomorphic sectional curvature $0$.
(Noting that the corresponding K\"ahler metric is the standard complex Euclidean metric on $\mathbb C^n$ up to a scalar constant.)
Let 
\[ \omega_{\mathbb B^n}:= -\sqrt{-1}\partial\overline\partial \log \left(1-\sum_{j=1}^n|z_j|^2\right)= {\sqrt{-1}\over (1-\lVert z \rVert^2)^2} \sum_{i,j=1}^n \left( (1-\lVert z \rVert^2)\delta_{ij}+\overline{z_i} z_j\right) dz_i\wedge d\overline{z_j}\]
be the standard K\"ahler form on $\mathbb B^n$ such that $(\mathbb B^n, \omega_{\mathbb B^n})$ is of constant holomorphic sectional curvature $-2$. 
We also let $\omega_{\mathbb P^n}$ be the standard K\"ahler form on $\mathbb P^n$ such that $(\mathbb P^n,\omega_{\mathbb P^n})$ is of constant holomorphic sectional curvature $2$. (Noting that the corresponding K\"ahler metric is the well-known Fubini-Study metric on $\mathbb P^n$ up to a scalar constant.)
More precisely, on the dense open subset $U_0=\{[\xi_0,\xi_1,\ldots,\xi_n]\in \mathbb P^n: \xi_0 \neq 0\}$ of $\mathbb P^n$, the standard K\"ahler form $\omega_{\mathbb P^n}$ on $\mathbb P^n$ can be written as
\[ \begin{split}
\omega_{\mathbb P^n}=& \sqrt{-1}\partial\overline\partial \log \left(1+\sum_{j=1}^n|w_j|^2\right)
= {\sqrt{-1}\over (1+\lVert w \rVert^2)^2} \sum_{i,j=1}^n \left( (1+\lVert w \rVert^2)\delta_{ij}-\overline{w_i} w_j\right) dw_i\wedge d\overline{w_j},
\end{split}\]
where $w_j:=\xi_j/\xi_0$ for $1\le j\le n$, $w=(w_1,\ldots,w_n)$ and $\lVert w \rVert^2:=\sum_{j=1}^n|w_j|^2$.

\subsection{Properties of CR maps between real hypersurfaces in complex Euclidean spaces}
For $j=1,2$, let $S_j\subset \mathbb C^{N_j}$ be a real hypersurface with a defining function $\rho_j$ satisfying $d\rho_j\neq 0$ on $S_j$, i.e., $S_j:=\{z\in \mathbb C^{N_j}: \rho_j(z)=0\}$.
Let $f: U \subset \mathbb C^{N_1} \to \mathbb C^{N_2}$ be a holomorphic map such that $f(S_1)\subset S_2$ with $S_1 \subset U$.
Then, there exists a smooth function $\vartheta$ on $U \subset \mathbb C^{N_1}$ such that
\begin{equation}\label{Eq:defining_functions}
\rho_2\circ f = \vartheta \cdot \rho_1.
\end{equation}
Fix a point $z_0\in S_1$. For any $v\in T^{1,0}_{z_0}(S_1)$ we have $f_*(v)\in T^{1,0}_{f(z_0)}(S_2)$.
Moreover, we have the following lemma.

\begin{lem}\label{lem:Preserve Levi form}
In the above settings, for any $v,v'\in T^{1,0}_{z_0}(S_1)$, $z_0\in S_1$, we have
\begin{equation}\label{Eq:Levi form}
 (\partial \overline\partial \rho_2)(f(z_0))(f_*(v),\overline{f_*(v')})
= \vartheta(z_0) (\partial \overline\partial\rho_1) (z_0)(v,\overline{v'}).
\end{equation}
\end{lem}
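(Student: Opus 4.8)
\textit{Proof proposal.} The plan is to differentiate the relation \eqref{Eq:defining_functions} and read off the Levi form from the resulting identity of $(1,1)$-forms. Since $f$ is holomorphic, $f^*$ sends $(p,q)$-forms to $(p,q)$-forms and commutes with $d$, hence with both $\partial$ and $\overline\partial$ separately. Therefore, applying $\partial\overline\partial$ to $\rho_2\circ f=\vartheta\cdot\rho_1$ on $U$ and using the Leibniz rule on the product $\vartheta\,\rho_1$ yields
\[
f^*(\partial\overline\partial\rho_2)=\partial\overline\partial(\rho_2\circ f)=\partial\overline\partial(\vartheta\,\rho_1),
\]
\[
\partial\overline\partial(\vartheta\,\rho_1)=(\partial\overline\partial\vartheta)\,\rho_1+\partial\vartheta\wedge\overline\partial\rho_1+\partial\rho_1\wedge\overline\partial\vartheta+\vartheta\,\partial\overline\partial\rho_1 .
\]

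Next I would evaluate this identity at $z_0\in S_1$ and pair it with $(v,\overline{v'})$ for $v,v'\in T^{1,0}_{z_0}(S_1)$. The first term drops out since $\rho_1(z_0)=0$. For the two middle terms, the crucial observation is that any $v\in T^{1,0}_{z_0}(S_1)$ satisfies $(\partial\rho_1)(z_0)(v)=0$: by definition $v$ lies in the kernel of $d\rho_1$, and writing $d\rho_1=\partial\rho_1+\overline\partial\rho_1$ with $(\overline\partial\rho_1)(v)=0$ for bidegree reasons (a $(0,1)$-form annihilates a $(1,0)$-vector), we get $(\partial\rho_1)(z_0)(v)=0$, and likewise $(\overline\partial\rho_1)(z_0)(\overline{v'})=\overline{(\partial\rho_1)(z_0)(v')}=0$. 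Expanding $(\partial\vartheta\wedge\overline\partial\rho_1)(v,\overline{v'})$ and $(\partial\rho_1\wedge\overline\partial\vartheta)(v,\overline{v'})$ and discarding the cross-terms that vanish by bidegree (a $(1,0)$-form kills a $(0,1)$-vector and vice versa), each of these two terms collapses to a single product containing the factor $(\partial\rho_1)(z_0)(v)$ or $(\overline\partial\rho_1)(z_0)(\overline{v'})$, hence vanishes. Only the last term survives, giving $f^*(\partial\overline\partial\rho_2)(z_0)(v,\overline{v'})=\vartheta(z_0)\,(\partial\overline\partial\rho_1)(z_0)(v,\overline{v'})$.

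Finally, since $f$ is holomorphic its differential $f_*$ preserves bidegree, so for the $(1,1)$-form $\partial\overline\partial\rho_2$ one has $f^*(\partial\overline\partial\rho_2)(z_0)(v,\overline{v'})=(\partial\overline\partial\rho_2)(f(z_0))(f_*v,\overline{f_*v'})$, which is exactly \eqref{Eq:Levi form}. I do not expect a genuine obstacle: the argument is a direct computation, and the only point requiring care is the bookkeeping of which terms drop out — one via the tangency condition $\rho_1(z_0)=0$, two via the fact that $v,v'$ annihilate $\partial\rho_1$ at $z_0$ together with bidegree considerations in the wedge products.
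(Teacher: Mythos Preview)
Your proposal is correct and follows essentially the same approach as the paper: apply $\partial\overline\partial$ to the identity $\rho_2\circ f=\vartheta\,\rho_1$, use holomorphicity of $f$ to identify the left-hand side with $f^*(\partial\overline\partial\rho_2)$, and on the right-hand side kill all terms except $\vartheta\,\partial\overline\partial\rho_1$ using $\rho_1(z_0)=0$, $\partial\rho_1(v)=0$, and $\overline\partial\rho_1(\overline{v'})=0$. Your write-up is slightly more explicit about the Leibniz expansion and the bidegree bookkeeping, but the argument is the same.
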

\begin{proof}
This is done by applying $\partial \overline\partial$ to both sides of (\ref{Eq:defining_functions}).
Applying $\partial \overline\partial$ to $\rho_2\circ f$, for any $v,v'\in T^{1,0}_{z_0}(S_1)$ we have
\[ (\partial \overline\partial(\rho_2\circ f))(z_0)(v,\overline{v'})
= f^*(\partial \overline\partial \rho_2) (z_0) (v,\overline{v'})
=(\partial \overline\partial \rho_2)(f(z_0))(f_*(v),\overline{f_*(v')}). \]
On the other hand, we apply $\partial \overline\partial$ to $\vartheta \cdot \rho_1$. Then, for any $v,v'\in T^{1,0}_{z_0}(S_1)$ we have
\[ (\partial \overline\partial(\vartheta \cdot \rho_1))(z_0)(v,\overline{v'})
= \vartheta(z_0) (\partial \overline\partial\rho_1) (z_0)(v,\overline{v'}) \]
because $\partial \rho_1(v) =0$, $\overline\partial \rho_1(\overline{v'})=0$ and $\rho_1(z_0)=0$.
Hence, we have obtained (\ref{Eq:Levi form}) as desired.
\end{proof}

\subsection{On the unit sphere bundle of $\mathbb P^m$}
\label{Sec:USB_Pm}
We let $m$ and $p$ be integers such that $1\le p \le m$.
Fixing a real constant $r>0$, we define the unit sphere bundle of $\mathbb P^m$ by
\[ S_r:=\left\{(\zeta,\xi)\in \wedge^p T_{\mathbb P^m}: \omega_{\mathbb P^m}^p(\zeta)(\xi,\overline\xi) = r \right\}, \]
where $\zeta=[\zeta_0,\zeta_1,\ldots,\zeta_m]\in \mathbb P^m$.
Write $\rho_r(\zeta,\xi):=\omega_{\mathbb P^m}^p(\zeta)(\xi,\overline\xi)-r$. 

We consider the point $x_0:=[1,0,\ldots,0]\in \mathbb P^m$ and we put $z=(z_1,\ldots,z_m)$ with $z_j:=\zeta_j/\zeta_0$ for $j=1,\ldots,m$.
Write $I=(i_1,\ldots,i_p)$ for the multi-index such that $1\le i_1<\cdots<i_p\le m$ and $\{s_I\}_{I}$ for the standard local basis of $\wedge^p T_{\mathbb P^m}$, where $s_I={\partial\over \partial z_{i_1}}\wedge \cdots \wedge {\partial\over \partial z_{i_p}}$.
Denote by ${\bf A}:=\{I=(i_1,\ldots,i_p): 1\le i_1<\cdots<i_p\le m\}$ the set of such multi-indices, which is a finite set with $|{\bf A}|={m\choose p}$.
Fixing $\xi_0\in \wedge^p T_{x_0}(\mathbb P^m)$, we may write $\xi_0 = \sum_{I\in {\bf A}} (\xi_0)_I s_I$.
Then, locally around the point $x_0=[1,0,\ldots,0]\in \mathbb P^m$ we may write $\xi = \sum_{I\in {\bf A}} \xi_I s_I$ and identify $\rho_r$ as a real-analytic function in $(z,\xi)$, namely
\[ \rho_r(z,\xi)
= C \sum_{I,J\in {\bf A}} \det( (g_{\mathbb P^m})_{i_s\overline{j_t}}(z) )_{1\le s,t\le p} \;\xi_I \overline{\xi_J} - r \]
for some real constant $C>0$ independent of the choice of local holomorphic coordinates, where $I=(i_1,\ldots,i_p)$, $J=(j_1,\ldots,j_p)\in {\bf A}$.
We also identify $x_0$ as the point ${\bf 0}\in \mathbb C^m$.

\medskip
\paragraph{\bf Holomorphic tangent space to $S_r$ at $(x_0,\xi_0)\in S_r$}
For $\mu=1,\ldots,m$ we have
${\partial \rho_r \over \partial z_\mu}({\bf 0},\xi_0) = 0$
since ${\partial\over \partial z_\mu}\det( (g_{\mathbb P^m})_{i_s\overline{j_t}}(z) )_{1\le s,t\le p}\big|_{z={\bf 0}}=0$.
For any $I\in {\bf A}$ we have ${\partial \rho_r \over \partial \xi_I}({\bf 0},\xi_0) = C \overline{(\xi_0)_I}$.
Thus, the holomorphic tangent space $T^{1,0}_{({\bf 0},\xi_0)}(S_r)$ to $S_r$ at $({\bf 0},\xi_0)$ consists of the vectors $\sum_{\mu=1}^m v_\mu {\partial \over \partial z_\mu} + \sum_{I\in {\bf A}} a_I s_I$ for $(v_1,\ldots,v_m)\in \mathbb C^m$ and $a_I\in \mathbb C$, $I\in {\bf A}$, with $\sum_{I\in {\bf A}} a_I \overline{(\xi_0)_I} = 0$.

\medskip
\paragraph{\bf Case $p=m$}
If $p=m$, then $|{\bf A}|=1$ and thus $a_{I}=0$, where $I=(1,\ldots,m)$.
In particular, the holomorphic tangent space $T^{1,0}_{(x_0,\xi_0)}(S_r)$ to $S_r$ at $(x_0,\xi_0)$ consists of the vectors $\sum_{\mu=1}^m v_\mu {\partial \over \partial z_\mu}$.

\medskip
\paragraph{\bf Case $p<m$}
Since $(x_0,\xi_0)\in S_r$, $(\xi_0)_{I_0} \neq 0$ for some $I_0\in {\bf A}$.
If $p<m$, then we may write
$a_{I_0} = -\sum_{I\in {\bf A},I\neq I_0} {a_I \overline{(\xi_0)_I}\over \overline{(\xi_0)_{I_0}}}$
and $T^{1,0}_{(x_0,\xi_0)}(S_r)$ consists of the vectors
\[ \sum_{\mu=1}^m v_\mu {\partial \over \partial z_\mu} + \sum_{I\in {\bf A}\smallsetminus \{I_0\}} a_I s_I -\sum_{I\in {\bf A}\smallsetminus \{I_0\}} {a_I \overline{(\xi_0)_I}\over \overline{(\xi_0)_{I_0}}} s_{I_0} \] 
for $(v_1,\ldots,v_m)\in \mathbb C^m$ and $a_I\in \mathbb C$, $I\in {\bf A}\smallsetminus \{I_0\}$.
Since ${\bf A}$ is a finite set, we may fix an ordering on ${\bf A}$ so that $I_0$ is the first element of ${\bf A}$.
Then, we define a row vector
${\bf b}:=\left(\ldots,-{\overline{(\xi_0)_I}\over \overline{(\xi_0)_{I_0}}},\ldots\right)_{I\in {\bf A}\smallsetminus \{I_0\}} \in M(1,|{\bf A}|-1;\mathbb C)$
and a column vector ${\bf a}:=(\ldots,a_I,\ldots)_{I\in {\bf A}\smallsetminus \{I_0\}}^t\in M(|{\bf A}|-1,1;\mathbb C)$.
Now, we have the transformation from $\mathbb C^{m+|{\bf A}|-1}\cong M(m+|{\bf A}|-1,1;\mathbb C)$ onto $T^{1,0}_{(x_0,\xi_0)}(S_r)$ in terms of matrix representations, as follows.
\[ \begin{bmatrix}v_1,\ldots, v_m, a_{I_0}, {\bf a}^t\end{bmatrix}^t
={\bf M} \begin{bmatrix} v_1,\ldots, v_m, {\bf a}^t \end{bmatrix}^t,
\text{ where } {\bf M}:=\begin{bmatrix}
{\bf I}_m & {\bf 0}\\
 {\bf 0}  & {\bf b}\\
 {\bf 0}  & {\bf I}_{|{\bf A}|-1}
\end{bmatrix}. \]

\medskip
\paragraph{\bf Computation of the Levi form}
Now, we compute $\partial\overline\partial \rho_r$ at $(x_0,\xi_0)\in S_r$.
Actually, it is similar to that in Yuan \cite{Yu17} for the case of the unit sphere bundle over $\mathbb B^m$.
The $(1,1)$-form $\partial\overline\partial \rho_r$ at $(x_0,\xi_0)\in S_r$ is represented by the complex Hessian matrix 
\[ {\bf H}:=\begin{bmatrix}
(\partial_{z_l}\partial_{\overline{z_k}}\rho_r ({\bf 0},\xi_0))_{1\le l,k\le m} & {\bf 0}\\
{\bf 0} & (\partial_{\xi_{I}}\partial_{\overline{\xi_{J}}}\rho_r ({\bf 0},\xi_0))_{I,J\in {\bf A}}
\end{bmatrix}\]
with
\[ \partial_{z_l}\partial_{\overline{z_k}}\rho_r ({\bf 0},\xi_0)
= - C \sum_{I,J\in {\bf A}} \Theta_{\wedge^p T_{\mathbb P^m}}
\left({\partial\over \partial z_l},{\partial\over \partial \overline{z_k}},s_I,\overline{s_J}\right)({\bf 0}) (\xi_0)_I \overline{(\xi_0)_J}
\]
and
\[ (\partial_{\xi_{I}}\partial_{\overline{\xi_{J}}}\rho_r ({\bf 0},\xi_0))
= C\left(\delta_{IJ}\right)_{I,J\in {\bf A}}. \]
Our goal here is to show that the Levi form $\partial \overline\partial \rho_r (\zeta,\xi)|_{T^{1,0}_{(\zeta,\xi)}(S_r)}$ at each point $(\zeta,\xi)\in S_r$ is non-degenerate.

\medskip
\paragraph{\bf Case $p=m$}
Suppose $p=m$. We have shown that $T^{1,0}_{(x_0,\xi_0)}(S_r)$ only consists of the vectors $\sum_{\mu=1}^m v_\mu {\partial \over \partial z_\mu}$.
Thus, the Levi form $\partial \overline\partial \rho_r(x_0,\xi_0)|_{T^{1,0}_{(x_0,\xi_0)}(S_r)}$ is represented by the Hermitian matrix
\[ (\partial_{z_l}\partial_{\overline{z_k}}\rho_r ({\bf 0},\xi_0))_{1\le l,k\le m}. \]
Since $(T_{\mathbb P^m},g_{\mathbb P^m})$ is Griffiths positive, the holomorphic line bundle $\wedge^m T_{\mathbb P^m}=\det(T_{\mathbb P^m})$ is positive with respect to the Hermitian metric induced from $g_{\mathbb P^m}$.
Then, the Hermitian inner product $H_m$ defined by
\[ H_m(\eta,\overline{\eta'}):=-\Theta_{\wedge^m T_{\mathbb P^m}}(\eta,\overline{\eta'},\xi_0,\overline{\xi_0})\quad \forall \;\eta,\eta'\in T^{1,0}_{x_0}(\mathbb P^m), \]
is negative definite. This shows that if $p=m$, then $\partial \overline\partial \rho_r(x_0,\xi_0)|_{T^{1,0}_{(x_0,\xi_0)}(S_r)}$ is negative definite, hence $S_r$ is Levi non-degenerate at $(x_0,\xi_0)$ of zero signature. By similar computations, when $p=m$, $S_r$ is Levi non-degenerate of zero signature and $\partial \overline\partial \rho_r|_{T^{1,0}(S_r)}$ is negative definite at any point $(\zeta,\xi)\in S_r$.

\medskip
\paragraph{\bf Case $p<m$}
If $p<m$, then $|{\bf A}|={m \choose p}>1$ and the Levi form $\partial \overline\partial \rho_r(x_0,\xi_0)|_{T^{1,0}_{(x_0,\xi_0)}(S_r)}$ at $(x_0,\xi_0)$ is represented by the Hermitian matrix
\[\overline{\bf M}^t {\bf H} {\bf M}=
\begin{bmatrix}
(\partial_{z_l}\partial_{\overline{z_k}}\rho_r ({\bf 0},\xi_0))_{1\le l,k\le m} & {\bf 0}\\
{\bf 0} & C(\overline{\bf v}^t{\bf v} + {\bf I}_{|{\bf A}|-1})
\end{bmatrix}.
\] 
The Hermitian matrix $C(\overline{\bf v}^t{\bf v} + {\bf I}_{|{\bf A}|-1})$ is positive definite, hence all its eigenvalues are positive.
We still need to consider the Hermitian inner product $H_p$ defined by
\[ H_p(\eta,\overline{\eta'}):=-\Theta_{\wedge^p T_{\mathbb P^m}}(\eta,\overline{\eta'},\xi_0,\overline{\xi_0})
\quad\forall\;\eta,\eta'\in T^{1,0}_{x_0}(\mathbb P^m). \]
To show that $H_p$ is negative definite, it suffices to show that $\wedge^p T_{\mathbb P^m}$ is Griffiths positive with respect to the Hermitian metric induced from $g_{\mathbb P^m}$.
By \cite[Equation (4.5'), p.\,258]{De12}, for any holomorphic tangent vector $\eta\in T^{1,0}_{x_0}(\mathbb P^m)$, $I=(i_1,\ldots,i_p)$, $J=(j_1,\ldots,j_p)\in {\bf A}$, we have
\[ \Theta_{\wedge^p T_{\mathbb P^m}}\left(\eta,\overline{\eta},s_I,\overline{s_J}\right)
= \sum_{k=1}^p \Theta_{T_{\mathbb P^m}}\left(\eta,\overline{\eta},{\partial\over \partial z_{i_k}},{\partial\over \partial \overline{z_{j_k}}}\right). \]
Since $(T_{\mathbb P^m},g_{\mathbb P^m})$ is Griffiths positive, so is $\wedge^p T_{\mathbb P^m}$ with respect to the Hermitian metric induced from $g_{\mathbb P^m}$.
Hence, $H_p$ is negative definite so that $H_p$ does not have zero eigenvalues and $H_p$ only has negative eigenvalues.
It follows that the Levi form $\partial \overline\partial \rho_r|_{T^{1,0}(S_r)}$ at $(x_0,\xi_0)$ has both negative and positive eigenvalues, and does not have zero eigenvalues. Hence, if $p<m$, then $S_r$ is Levi non-degenerate of positive signature $\le {1\over 2}\big(\dim_{\mathbb C} \wedge^p T_{\mathbb P^m} - 1\big)$. The same computation for $\partial \overline\partial \rho_r|_{T^{1,0}(S_r)}$ also works at any point $(\zeta,\xi)\in S_r$ so that $S_r$ is Levi non-degenerate of positive signature $\le {1\over 2}\left(\dim_{\mathbb C} \wedge^p T_{\mathbb P^m} - 1\right)$ when $p<m$.

\section{Local holomorphic maps between K\"ahler manifolds preserving $(p,p)$-forms}\label{Sec:Local holo map pp form}
Let $(X,g)$ be a complex $n$-dimensional K\"ahler manifold with the K\"ahler form $\omega$.
For simplicity, in the present article we write $(X,\omega)$ to indicate that $\omega$ is the K\"ahler form.
In terms of local holomorphic coordinates $(z_1,\ldots,z_n)$ on $X$, we may write $\omega=\sqrt{-1}\sum_{i,j=1}^n g_{i\overline j} dz_i\wedge d\overline{z_j}$ locally.
Then, for $1\le p\le n$ we have
\[ \omega^p = (\sqrt{-1})^p p! 
\sum_{\begin{tiny}\begin{split}
&1\le i_1<\cdots<i_p\le n,\\
&1\le j_1<\cdots<j_p\le n
\end{split}\end{tiny}}
 \det\begin{pmatrix} g_{i_s\overline{j_t}} \end{pmatrix}_{1\le s,t\le p}
 (dz_{i_1}\wedge d\overline{z_{j_1}}) \wedge \cdots \wedge 
 (dz_{i_p}\wedge d\overline{z_{j_p}}). \]

Let $(M,g_M)$ and $(N,g_N)$ be K\"ahler manifolds of complex dimensions $m$ and $n$ respectively.
Denote by $\omega_M$ and $\omega_N$ the K\"ahler forms of $(M,g_M)$ and $(N,g_N)$ respectively.
Fix some point $x_0\in M$.
Let $F:(M;x_0) \to (N;F(x_0))$ be a germ of holomorphic map such that
\begin{equation}\label{Eq:General_pp}
F^*\omega_N^p = \lambda \omega_M^p,
\end{equation}
for some integer $p$, $1\le p\le \min\{m,n\}$, and some positive real constant $\lambda$.
Such a germ of holomorphic map $F$ is clearly of rank at least $p$ near $x_0$.

Write $w=(w_1,\ldots,w_m)$ (resp.\,$z=(z_1,\ldots,z_n)$) for the local holomorphic coordinates on $M$ (resp.\,$N$) around the point $x_0$ (resp.\,$F(x_0)$). In what follows we write
\[ \omega_M = \sqrt{-1}\sum_{i,j=1}^m (g_M)_{i\overline j} dw_i\wedge d\overline{w_j},\quad \omega_N = \sqrt{-1} \sum_{i,j=1}^n (g_N)_{i\overline j} dz_i\wedge d\overline{z_j} \]
and $F(w) = (F_1(w),\ldots,F_n(w))$ locally around $x_0$ in terms of the local holomorphic coordinates.
Write
\[ {\partial(F_{i_1},\ldots,F_{i_p})\over \partial(w_{l_1},\ldots,w_{l_p})}
:= \begin{pmatrix}
{\partial F_{i_1}\over \partial w_{l_1}} & \cdots & {\partial F_{i_1}\over \partial w_{l_p}}\\
\vdots & \ddots & \vdots\\
{\partial F_{i_p}\over \partial w_{l_1}} & \cdots & {\partial F_{i_p}\over \partial w_{l_p}}
\end{pmatrix} \]
for $1\le i_1<\cdots<i_p\le n$ and $1\le l_1<\cdots <l_p\le m$.
Then, we have
\[ \begin{split}
F^*\omega_N^p
=& (\sqrt{-1})^p (-1)^{p(p-1)\over 2} p! 
\sum_{\begin{tiny}\begin{split}
&1\le i_1<\cdots<i_p\le n,\\
&1\le j_1<\cdots<j_p\le n\end{split}\end{tiny}}
\det\begin{pmatrix} (g_N)_{i_s\overline{j_t}}(F(w)) \end{pmatrix}_{1\le s,t\le p} \\
& \hspace{7.5cm}\cdot F^*( dz_{i_1}\wedge \cdots \wedge dz_{i_p}
\wedge d\overline{z_{j_1}}\wedge \cdots \wedge d\overline{z_{j_p}})\\
=& (\sqrt{-1})^p (-1)^{p(p-1)\over 2} p! 
\sum_{\begin{tiny}\begin{split}
&1\le l_1<\cdots<l_p\le m,\\
&1\le k_1<\cdots<k_p\le m \end{split}\end{tiny}}
\Theta_{l_1,\ldots,l_p;\overline{k_1},\ldots,\overline{k_p}}
 dw_{l_1}\wedge \cdots \wedge dw_{l_p}
\wedge d\overline{w_{k_1}}\wedge \cdots \wedge d\overline{w_{k_p}},\\
\end{split}\]
where
\[ \begin{split}
&\Theta_{l_1,\ldots,l_p;\overline{k_1},\ldots,\overline{k_p}}\\
:=&\sum_{\begin{tiny}\begin{split}
&1\le i_1<\cdots<i_p\le n,\\
&1\le j_1<\cdots<j_p\le n\end{split}\end{tiny}}
\det\begin{pmatrix} (g_N)_{i_s\overline{j_t}}(F(w)) \end{pmatrix}_{1\le s,t\le p} \det \left({\partial(F_{i_1},\ldots,F_{i_p})\over \partial(w_{l_1},\ldots,w_{l_p})}\right)\det \left(\overline{{\partial(F_{j_1},\ldots,F_{j_p})\over \partial(w_{k_1},\ldots,w_{k_p})}}\right). 
\end{split}\]
Then, in terms of the local holomorphic coordinates, (\ref{Eq:General_pp}) becomes
\begin{equation}\label{Eq:System_Eqs_pp}
\begin{split}
&\sum_{\begin{tiny}\begin{split}
&1\le i_1<\cdots<i_p\le n,\\
&1\le j_1<\cdots<j_p\le n\end{split}\end{tiny}}
\det\begin{pmatrix} (g_N)_{i_s\overline{j_t}}(F(w)) \end{pmatrix}_{1\le s,t\le p}
\det \left({\partial(F_{i_1},\ldots,F_{i_p})\over \partial(w_{l_1},\ldots,w_{l_p})}\right)
\det \left(\overline{{\partial(F_{j_1},\ldots,F_{j_p})\over \partial(w_{k_1},\ldots,w_{k_p})}}\right)\\
=& \lambda \det\begin{pmatrix} (g_M)_{l_s\overline{k_t}}(w) \end{pmatrix}_{1\le s,t\le p} 
\end{split}
\end{equation}
for $1\le l_1<\cdots<l_p \le m$ and $1\le k_1<\cdots<k_p\le m$.

\subsection{General situation}

Now, we observe the following rigidity result (i.e.,\,Proposition \ref{thm:pp_form_to_Holo_iso_0}) on local holomorphic maps from $M$ to $N$ between K\"ahler manifolds $M$ and $N$ preserving $(p,p)$-forms when $1\le p<\dim_{\mathbb C}(M)$.

\begin{pro}[Proposition \ref{thm:pp_form_to_Holo_iso_0}]\label{thm:pp_form_to_Holo_iso}
Let $(M,\omega_M)$ and $(N,\omega_N)$ be K\"ahler manifolds of finite complex dimensions $m$ and $n$ respectively, where $\omega_M$ and $\omega_N$ denote the corresponding K\"ahler forms.
Let $F:(M;x_0)\to (N;F(x_0))$ be a germ of holomorphic map such that $F^*\omega_N^p = \lambda \omega_M^p$ for some real constant $\lambda>0$ and some integer $p$, $1\le p\le \min\{m,n\}$. Then, $m\le n$.

If we assume in addition that $1\le p<m$, then $F^*\omega_N=\lambda^{1\over p} \omega_M$ so that $F:(M,\lambda^{1\over p} \omega_M;x_0)\to (N,\omega_N;F(x_0))$ is a local holomorphic isometry.
\end{pro}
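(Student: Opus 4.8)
The plan is to reduce both assertions to pointwise linear algebra on the Hermitian forms that $\omega_M^p$ and $\omega_N^p$ induce on $p$-th exterior powers of holomorphic tangent spaces. First I would observe that $F^*\omega_N^p=(F^*\omega_N)^p$, so in local coordinates the pulled-back $(p,p)$-form is obtained from $F^*\omega_N=\sqrt{-1}\sum h_{i\bar j}\,dw_i\wedge d\overline{w_j}$, with $h_{i\bar j}:=\sum_{a,b}(g_N)_{a\bar b}(F(w))\tfrac{\partial F_a}{\partial w_i}\overline{\tfrac{\partial F_b}{\partial w_j}}$, by exactly the formula displayed for $\omega^p$ in Section \ref{Sec:Local holo map pp form}. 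Hence, evaluated on $v\in\wedge^p T_{x_0}(M)$, equation (\ref{Eq:System_Eqs_pp}) at $x_0$ reads $(\wedge^p h)(x_0)(v,\overline v)=\lambda\,(\wedge^p g_M)(x_0)(v,\overline v)$, where $\wedge^p(\cdot)$ denotes the Gram form induced on $\wedge^p T_{x_0}(M)$ (the universal constant $(-1)^{p(p-1)/2}p!$ relating $\omega^p$ to this Gram form is the same on both sides and cancels). Since $g_M$ is K\"ahler, $\wedge^p g_M$ is positive definite, so $\wedge^p h$ is positive definite at $x_0$; but $h=(dF_{x_0})^*g_N$ is positive semidefinite, and $\wedge^p h$ is degenerate as soon as $h$ is (if $h(v,\cdot)\equiv 0$ with $v\ne 0$, then $v\wedge u_2\wedge\cdots\wedge u_p$ kills $\wedge^p h$, using $p\le m$). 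Therefore $h$ is positive definite at $x_0$, i.e.\ $dF_{x_0}$ is injective, whence $m\le n$.

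For the isometry statement when $p<m$, I would argue pointwise on a neighbourhood of $x_0$ on which the germ $F$ and the relation $F^*\omega_N^p=\lambda\omega_M^p$ are defined, using the following lemma: if $G$ is a positive definite and $H$ a positive semidefinite Hermitian form on $\mathbb C^m$ with $\wedge^p H=\lambda\,\wedge^p G$ for some $\lambda>0$ and some $1\le p<m$, then $H=\lambda^{1/p}G$. To prove the lemma, simultaneously diagonalize: pick a basis with $G=\mathrm{I}_m$ and $H=\operatorname{diag}(a_1,\dots,a_m)$, $a_i\ge 0$; then on $\wedge^p\mathbb C^m$ both induced forms are diagonal in the associated basis $\{e_{i_1}\wedge\cdots\wedge e_{i_p}\}$, and the hypothesis becomes $\prod_{i\in I}a_i=\lambda$ for every $p$-element subset $I\subseteq\{1,\dots,m\}$. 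As $\lambda>0$ all $a_i>0$; and since $p<m$, for any $j\ne k$ one can choose a $(p-1)$-subset $S\subseteq\{1,\dots,m\}\setminus\{j,k\}$ (possible because $m-2\ge p-1$) and compare $\prod_{i\in S\cup\{j\}}a_i=\prod_{i\in S\cup\{k\}}a_i=\lambda$ to get $a_j=a_k$. Thus all $a_i$ equal a common $a>0$ with $a^p=\lambda$, i.e.\ $a=\lambda^{1/p}$, so $H=\lambda^{1/p}G$. Applying this at every point with $G=(g_M)_x$ and $H=(F^*g_N)_x$ gives $F^*g_N=\lambda^{1/p}g_M$, that is $F^*\omega_N=\lambda^{1/p}\omega_M$; since $\lambda^{1/p}\omega_M$ is a K\"ahler form, this says precisely that $F\colon(M,\lambda^{1/p}\omega_M;x_0)\to(N,\omega_N;F(x_0))$ is a local holomorphic isometry.

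The only genuine input is the combinatorial step in the lemma, and that is exactly where the hypothesis $p<m$ is indispensable: when $p=m$ there is a single $p$-subset, the constraint $a_1\cdots a_m=\lambda$ does not force the $a_i$ to coincide, and the conclusion fails in general (any determinant-preserving but non-isometric change of metric is a counterexample). Positivity — $\lambda>0$ together with $\omega_M,\omega_N$ K\"ahler — is used in two essential places: to upgrade "$\wedge^p h$ positive definite" to "$h$ positive definite" (hence $m\le n$), and to single out the positive $p$-th root $\lambda^{1/p}$ (ruling out $a=-\lambda^{1/p}$ when $p$ is even). The bookkeeping of the constant $(-1)^{p(p-1)/2}p!$ between $\omega^p$ and the Gram form on $\wedge^p T$ is harmless, as it cancels throughout.
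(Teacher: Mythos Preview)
Your proof is correct and follows essentially the same approach as the paper: simultaneous diagonalization of $g_M$ and $F^*g_N$ at a point, followed by the combinatorial observation that $\prod_{i\in I}a_i=\lambda$ for every $p$-subset $I$ with $p<m$ forces all $a_i$ equal. The only organizational difference is in the inequality $m\le n$: you obtain it uniformly for all $p$ by noting that positive-definiteness of $\wedge^p h$ forces $h$ itself to be nondegenerate (hence $dF_{x_0}$ injective), whereas the paper treats the case $p=m$ separately as trivial and for $p<m$ deduces $m\le n$ only after establishing the isometry.
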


\begin{proof}
If $p=m$, then $m \leq n$, since otherwise $\omega_N^p=0$. Then, it suffices to consider the case when $1\le p<m$.

Let $y_0$ be an arbitrary point in the domain of $F$.
Since $M$ is K\"ahler, we may choose local holomorphic coordinates $(z_1,\ldots,z_m)$ around $y_0$ on $M$ such that $\omega_M(y_0) = \sqrt{-1} \sum_{j=1}^m dz_j\wedge d\overline{z_j}$.
On the other hand, $(F^*\omega_N)(y_0)$ is represented by a Hermitian matrix $H$ that is positive semi-definite.

By identifying $y_0$ with ${\bf 0}\in \mathbb C^m$ and applying a suitable unitary transformation around ${\bf 0}$ in $\mathbb C^m$, we can diagonalize the Hermitian matrix $H$ and the Hermitian matrix representing the $(1,1)$-form $\omega_M(y_0)$ is still the identity matrix.
Actually, there is a unitary matrix ${\bf U}\in U(m)$ such that
\[ \overline{\bf U}^t H {\bf U} = \begin{pmatrix}
\lambda_1 & & \\
 & \ddots & \\
 & & \lambda_m
\end{pmatrix} \]
for some non-negative real numbers $\lambda_j$, $1\le j\le m$, and it is obvious that $\overline{\bf U}^t {\bf I}_m {\bf U} = {\bf I}_m$.

Now, we may still use $(z_1,\ldots,z_m)$ to denote the new local holomorphic coordinates because $\omega_M(y_0)$ remains unchanged under the transformation.
In particular, we can write $(F^*\omega_N)(y_0) = \sqrt{-1} \sum_{j=1}^m \lambda_j dz_j\wedge d\overline{z_j}$.
By the equation $F^*\omega_N^p = \lambda \omega_M^p$ at the point $y_0$, we have
\[ \left(\sum_{j=1}^m \lambda_j dz_j\wedge d\overline{z_j}\right)^p
= \lambda \left(\sum_{j=1}^m dz_j\wedge d\overline{z_j}\right)^p, \]
equivalently, 
\begin{equation}\label{Eq:Prod_EigenValue}
\prod_{k=1}^p \lambda_{j_k} = \lambda \text{ for } 1\le j_1<\cdots<j_p \le m.
\end{equation}
This implies that $\lambda_j\neq 0$ for $1\le j\le m$.
For any distinct $\mu,\nu$, $1\le \mu,\nu\le m$, since $p-1 \le m-2$ by the assumption $p<m$, we can choose distinct integers $i_1,\ldots,i_{p-1} \in \{1,\ldots,m\} \smallsetminus \{\mu,\nu\}$ such that by (\ref{Eq:Prod_EigenValue}) we have
\[ \lambda_\mu \prod_{k=1}^{p-1} \lambda_{i_k} 
= \lambda_\nu \prod_{k=1}^{p-1} \lambda_{i_k} = \lambda, \]
which implies $\lambda_\mu=\lambda_\nu$.
Therefore, we have $\lambda_1=\cdots=\lambda_m=\lambda^{1\over p}$, and thus $(F^*\omega_N)(y_0) = \lambda^{1\over p} \omega_M(y_0)$.
This identity indeed holds true for an arbitrary point $y_0$ in the domain of $F$.
Hence, $F^*\omega_N = \lambda^{1\over p} \omega_M$ on the domain of $F$, and thus $F:(M,\lambda^{1\over p} \omega_M;x_0)\to (N,\omega_N;F(x_0))$ is a local holomorphic isometry, as desired.
It follows readily that $F$ is of rank $m$ and thus $m\le n$.
\end{proof}

Proposition \ref{thm:pp_form_to_Holo_iso} reduces the whole study on local holomorphic maps between K\"ahler manifolds $M$ and $N$ preserving $(p,p)$-forms that are induced from the K\"ahler forms of $M$ and $N$, into the following two categories.
\begin{enumerate}
\item The study of local holomorphic isometries between K\"ahler manifolds $M$ and $N$.
\item The study of local holomorphic maps $F:(M;x_0)\to (N;F(x_0))$ such that $F^*\omega_N^m = \lambda \omega_M^m$ for some real constant $\lambda>0$, where $M$ and $N$ are K\"ahler manifolds of complex dimensions $m$ and $n$ respectively, and $m\le n$.
\end{enumerate}

Now, we also complete the proof of Theorem \ref{thm:local holo map_BSD_pp_form}, as follows.

\begin{proof}[Proof of Theorem \ref{thm:local holo map_BSD_pp_form}]
We first suppose $1\le p < n $.
Then, it follows from Proposition \ref{thm:pp_form_to_Holo_iso_0} that $f:(D,\lambda^{1\over p} ds_D^2;x_0) \to (\Omega,ds_\Omega^2;y_0)$ is a germ of holomorphic isometry and in particular, $n \leq N$.

Now, we suppose $n=N=p$.
Write $f=(f_1,\ldots,f_n)$, $w=(w_1,\ldots,w_n)\in D \Subset \mathbb C^n$, $z=(z_1,\ldots,z_n)\in \Omega \Subset \mathbb C^n$, and 
\[ \omega_D=\sqrt{-1}\sum_{i,j=1}^n (\omega_D)_{i\overline j}  dw_i\wedge d\overline{w_j},\qquad \omega_\Omega=\sqrt{-1}\sum_{i,j=1}^n (\omega_\Omega)_{i\overline j}  dz_i\wedge d\overline{z_j}. \]
Denote by $Jf(w):=\begin{pmatrix} {\partial f_i\over \partial w_j}(w) \end{pmatrix}_{1\le i,j\le n}$ the Jacobian matrix of $f$.
Then, $f^*\omega_{\Omega}^p = \lambda \omega_D^p$ is equivalent to
\[ \det\begin{pmatrix} (\omega_\Omega)_{i\overline j} (f(w)) \end{pmatrix}_{1\le i,j\le n} |\det Jf(w)|^2
= \lambda \det\begin{pmatrix} (\omega_D)_{i\overline j}(w) \end{pmatrix}_{1\le i,j\le n}. \]
Therefore, we have
\[ -\sqrt{-1}\partial\overline\partial \log \det\begin{pmatrix} (\omega_\Omega)_{i\overline j} (f(w)) \end{pmatrix}_{1\le i,j\le n}
= -\sqrt{-1} \partial\overline\partial \log \det\begin{pmatrix} (\omega_D)_{i\overline j}(w) \end{pmatrix}_{1\le i,j\le n}, \]
i.e., $f^* {\rm Ric}(\Omega,ds_\Omega^2)={\rm Ric}(D,ds_D^2)$.
It is well known that for any bounded symmetric domain $U$ we have ${\rm Ric}(U,ds_U^2)=-\omega_{U}$ (cf.\,Mok \cite[p.\,59]{Mo89}).
Hence, we have $f^*\omega_{\Omega} = \omega_D$, i.e., $f:(D,ds_D^2;x_0)\to (\Omega,ds_\Omega^2;y_0)$ is a local holomorphic isometry, and thus $\lambda=1$ by the identity $f^*\omega_{\Omega}^p = \lambda \omega_D^p$.

In both cases, it follows from the extension theorem of Mok \cite[Theorem 1.3.1, p.\,1618]{Mo12} that $f$ extends to a proper holomorphic isometric embedding $F:(D,\lambda^{1\over p} ds_D^2) \to (\Omega,ds_\Omega^2)$.
When $p=n=N$, we have shown that $\lambda=1$ in the above.
If we assume in addition that each irreducible factor of $D$ is of rank $\ge 2$, then $F$ is totally geodesic by \cite[Theorem 1.3.2, p.\,1636]{Mo12}, and so is $f$.
The last assertion follows directly.
\end{proof}

\subsection{Technique of using the unit sphere bundles}
\label{Sec:Tech_USB}
Let $(M,\omega_M)$ and $(N,\omega_N)$ be K\"ahler manifolds of complex dimensions $m$ and $n$ respectively.
Let $F:(M;x_0) \to (N;F(x_0))$ be a germ of holomorphic map such that $F^*\omega_N^p = \lambda \omega_M^p$ for some integer $p$, $1\le p\le \min\{m,n\}$, and some real constant $\lambda>0$.
Write $U$ for an open neighborhood of $x_0$ in the domain of $F$ in which we have the holomorphic coordinates $(w_1,\ldots,w_m)$ on $U$.
Define the unit sphere bundles
\begin{equation}\label{Eq:USphereB1}
S_1:=\left\{(w,v)\in \wedge^p T_{U}: \lambda \omega_M^p(w)(v,\overline v) = 1\right\},\qquad S_2:=\left\{(z,\xi)\in \wedge^p T_N: \omega_N^p(z)(\xi,\overline\xi) = 1\right\}.
\end{equation}
The map $F$ induces the holomorphic map $f:=(F,dF): \wedge^p T_{U}\to \wedge^p T_N$ defined by $f(w,v):= (F(w),dF_w(v))$ for all $(w,v)\in \wedge^p T_{U}$.
By (\ref{Eq:General_pp}), $f$ maps $S_1$ into $S_2$. Writing $\rho_1(w,v):=\lambda \omega_M^p(w)(v,\overline v) - 1$ and $\rho_2(z,\xi):=\omega_N^p(z)(\xi,\overline\xi) - 1$, we also define
\[ U_1:=\left\{(w,v)\in \wedge^p T_{U}: \rho_1(w,v)< 0 \right\},\qquad 
V_1:=\left\{(w,v)\in \wedge^p T_{U}: \rho_1(w,v) > 0 \right\}
 \]
and
\[ U_2:=\left\{(z,\xi)\in \wedge^p T_N: \rho_2(z,\xi) < 0 \right\},\qquad
 V_2:=\left\{(z,\xi)\in \wedge^p T_N: \rho_2(z,\xi) > 0 \right\}.
\]
Then, by (\ref{Eq:General_pp}) and the definitions of $\rho_1$ and $\rho_2$, we have $\rho_2\circ f= \rho_1$ so that $f=(F,dF)$ maps $U_1$ into $U_2$, and maps $V_1$ into $V_2$.

As the continuation of the study in Yuan \cite{Yu17}, one may make use of the method of CR geometry to study the CR map $f:S_1\to S_2$.
To establish a non-existence theorem for such a germ of holomorphic map $F$ between certain K\"ahler manifolds $M$ and $N$, one key idea is to show the non-existence of a CR map between the real hypersurfaces $S_1$ and $S_2$ induced from the K\"ahler manifolds $(M,\omega_M)$ and $(N,\omega_N)$. In particular, we have the following general theorem.

\begin{thm}\label{thm:General_M_N_curva assump}
Let $(M,\omega_M)$ and $(N,\omega_N)$ be K\"ahler manifolds of complex dimensions $m$ and $n$ respectively such that $(M,\omega_M)$ is of semipositive holomorphic bisectional curvature and $(N,\omega_N)$ is of negative holomorphic bisectional curvature.
Then, there does not exist any germ of holomorphic map $F:(M;x_0)\to (N;F(x_0))$ such that $F^*\omega_N^p=\lambda \omega_M^p$ holds, where $\lambda>0$ is a real constant and $p$ is an integer with $1\le p\le \min\{m,n\}$.
\end{thm}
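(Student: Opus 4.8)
The plan is to distinguish the cases $1\le p<m$ and $p=m$, which is legitimate because $m\le n$ in both by Proposition~\ref{thm:pp_form_to_Holo_iso_0}. \emph{Case $1\le p<m$.} By the second part of Proposition~\ref{thm:pp_form_to_Holo_iso_0}, $F$ is a germ of holomorphic isometry $(M,\lambda^{1/p}\omega_M;x_0)\to(N,\omega_N;F(x_0))$; being a holomorphic isometry it is in particular a holomorphic immersion whose image is locally a K\"ahler submanifold of $(N,\omega_N)$ carrying the induced metric $\lambda^{1/p}\omega_M$. By the Gauss equation for K\"ahler submanifolds, the holomorphic bisectional curvature of such a submanifold is bounded above, in the corresponding directions, by that of the ambient manifold; since $(N,\omega_N)$ has negative holomorphic bisectional curvature, $(M,\lambda^{1/p}\omega_M)$ — and hence $(M,\omega_M)$, because rescaling a K\"ahler metric by a positive constant preserves the sign of the holomorphic bisectional curvature — has negative holomorphic bisectional curvature, contradicting the hypothesis that $(M,\omega_M)$ has semipositive holomorphic bisectional curvature.

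\emph{Case $p=m$.} Here $F^*\omega_N^m=\lambda\omega_M^m$ with $\lambda>0$ and $\omega_M^m$ nowhere vanishing forces $dF_w$ to have rank $m$ at every point $w$ of the domain of $F$, so $F$ is a holomorphic immersion. Following Section~\ref{Sec:Tech_USB}, form the unit sphere bundles $S_1\subset\wedge^m T_U$ and $S_2\subset\wedge^m T_N$ and the induced holomorphic map $f=(F,dF)\colon S_1\to S_2$; as noted there $\rho_2\circ f=\rho_1$, so the function $\vartheta$ of Lemma~\ref{lem:Preserve Levi form} is identically $1$. Fix any $(w_0,v_0)\in S_1$, put $(z_0,\xi_0):=f(w_0,v_0)$, and note $\xi_0\neq0$ since $F$ is an immersion and $v_0\neq0$; choose K\"ahler normal coordinates at $w_0$ on $M$ and at $z_0$ on $N$. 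Repeating the computations of Section~\ref{Sec:USB_Pm} with $T_M$ in place of $T_{\mathbb P^m}$: because $\wedge^m T_M$ is a line bundle, $T^{1,0}_{(w_0,v_0)}(S_1)$ consists exactly of the horizontal vectors $\sum_{\mu=1}^m v_\mu\,\partial/\partial w_\mu$, and on it the Levi form of $S_1$ is the Hermitian form $(X,\overline Y)\mapsto -C_1\,\Theta_{\wedge^m T_M}(X,\overline Y,v_0,\overline{v_0})$ with $C_1>0$; since $v_0$ is decomposable this is a positive multiple of $-\mathrm{Ric}(M,\omega_M)(w_0)$, which is negative semidefinite because semipositive holomorphic bisectional curvature forces $\mathrm{Ric}(M,\omega_M)\ge0$.

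On the target side, the complex Hessian of $\rho_2$ at $(z_0,\xi_0)$ is block diagonal in the chosen normal coordinates, with fibre block a positive multiple of the identity and base block $(X,\overline Y)\mapsto -C_2\,\Theta_{\wedge^m T_N}(X,\overline Y,\xi_0,\overline{\xi_0})$ with $C_2>0$; since $\xi_0$ is decomposable and $(N,\omega_N)$ has negative holomorphic bisectional curvature, $\Theta_{\wedge^m T_N}(X,\overline X,\xi_0,\overline{\xi_0})<0$ for every $X\neq0$, so the base block is positive definite and $\partial\overline\partial\rho_2(z_0,\xi_0)$ is positive definite. Now apply Lemma~\ref{lem:Preserve Levi form} with $\vartheta\equiv1$: for every $v\in T^{1,0}_{(w_0,v_0)}(S_1)$,
\[ (\partial\overline\partial\rho_2)(z_0,\xi_0)(f_*v,\overline{f_*v})=(\partial\overline\partial\rho_1)(w_0,v_0)(v,\overline v). \]
The left-hand side is $\ge0$ and vanishes only if $f_*v=0$, while the right-hand side is $\le0$; hence $f_*v=0$ for all $v\in T^{1,0}_{(w_0,v_0)}(S_1)$, in particular $f_*(\partial/\partial w_\mu)=0$ for $\mu=1,\dots,m$. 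This is absurd: the projection of $f_*(\partial/\partial w_\mu)$ to $T^{1,0}_{z_0}(N)$ under the bundle projection is $\partial F/\partial w_\mu$, and these are linearly independent because $F$ is an immersion. This contradiction completes the proof.

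I expect the main obstacle to be the Levi-form bookkeeping in Case $p=m$: translating the curvature hypotheses on $M$ and $N$ into the correct (semi)definiteness statements for $\Theta_{\wedge^m T_M}$ and $\Theta_{\wedge^m T_N}$ in the sign conventions of Section~\ref{Sec:USB_Pm}, and verifying that for $p=m$ the single fibre direction is transverse to $S_1$ so that the Levi form of $S_1$ only involves the Ricci form of $(M,\omega_M)$. Once these points are in place, the pseudoconvexity mismatch dictated by Lemma~\ref{lem:Preserve Levi form} yields the contradiction immediately.
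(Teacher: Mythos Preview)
Your proof is correct, but it takes a different organizational route from the paper. The paper gives a single uniform argument for all $1\le p\le\min\{m,n\}$ using the sphere-bundle/Levi-form technique: it picks one horizontal vector $\eta=(\eta_1,\mathbf 0)\in T^{1,0}_{(w_0,v_0)}(S_1)$ with $dF_{w_0}(\eta_1)\neq 0$, observes that $(\partial\overline\partial\rho_1)(\eta,\overline\eta)=-C\,\Theta_{\wedge^p T_U}(\eta_1,\overline{\eta_1},v_0,\overline{v_0})\le 0$ by Griffiths semipositivity of $\wedge^p T_M$, while $(\partial\overline\partial\rho_2)(f_*\eta,\overline{f_*\eta})>0$ by strict pseudoconvexity of $S_2$, and this single sign clash suffices. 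You instead split into $p<m$ and $p=m$. For $p<m$ you invoke Proposition~\ref{thm:pp_form_to_Holo_iso_0} to reduce to a holomorphic isometry and then use the Gauss equation to transport the negative bisectional curvature of $N$ to $M$; this is a perfectly valid and more classical argument that avoids the CR machinery entirely in this range. For $p=m$ your argument is essentially the paper's, specialized to the line-bundle situation where $T^{1,0}(S_1)$ is purely horizontal and the Levi form becomes (a multiple of) $-\mathrm{Ric}(M,\omega_M)$.

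The trade-off: your $p<m$ case is more elementary and self-contained (it needs only Proposition~\ref{thm:pp_form_to_Holo_iso_0} and the curvature-decreasing property of K\"ahler submanifolds), but you still need the full sphere-bundle computation for $p=m$, so nothing is saved globally. The paper's approach is shorter and exhibits a single mechanism that works uniformly; in particular it never needs $F$ to be an immersion, only $\mathrm{rank}\,dF\ge 1$, which follows immediately from $F^*\omega_N^p=\lambda\omega_M^p$ with $\lambda>0$. Your final meta-paragraph correctly identifies the bookkeeping points, and they are indeed all in order.
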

\begin{proof}
Assume the contrary that there exists a germ of holomorphic map $F:(M;x_0) \to (N;F(x_0))$ such that $F^*\omega_N^p = \lambda \omega_M^p$ holds.
Write $U\subset M$ for the domain of $F$.
Then, in the above settings we have the CR map $f:=(F,dF):S_1\to S_2$ between the unit sphere bundles $S_1$ and $S_2$ defined in (\ref{Eq:USphereB1}), and we write $\rho_1$ and $\rho_2$ for the defining functions of $S_1$ and $S_2$ respectively as above.
We have $\rho_2\circ f = \rho_1$, by $F^*\omega_N^p=\lambda \omega_M^p$.
Fix a point $(w_0,v_0)\in S_1$.
By Lemma \ref{lem:Preserve Levi form}, for any $\eta,\eta'\in T^{1,0}_{(w_0,v_0)}(S_1)$, we have
\begin{equation}\label{Eq:Levi form_General_M_N_curva assump}
 (\partial \overline\partial \rho_2)(f(w_0,v_0))(f_*(\eta),\overline{f_*(\eta')}) 
= (\partial \overline\partial\rho_1) (w_0,v_0)(\eta,\overline{\eta'}).
\end{equation}

By similar computations in Section \ref{Sec:USB_Pm} (cf.\,Yuan \cite{Yu17}), since $(N,\omega_N)$ is of negative holomorphic bisectional curvature, $S_2$ is strictly pseudoconvex.
By the identity $F^*\omega_N^p = \lambda \omega_M^p$, $F$ is of rank $\ge p$ at every point in $U$. 
Thus, we may choose a vector $\eta=(\eta_1,{\bf 0})\in T^{1,0}_{(w_0,v_0)}(S_1)$ with $\eta_1\in T_{w_0}(U)$ and $f_*(\eta)=(dF_{w_0}(\eta_1),\eta')\neq {\bf 0}$.
Since $(M,\omega_M)$ is of semipositive holomorphic bisectional curvature, the holomorphic tangent bundle $T_U$ is Griffiths semipositive, and thus $\wedge^p T_U$ is Griffiths semipositive with respect to the Hermitian metric induced from $\omega_M$ by similar arguments in Section \ref{Sec:USB_Pm}.
Therefore, by similar computations in Section \ref{Sec:USB_Pm}, we have
\[ (\partial\overline\partial \rho_1) (w_0,v_0)(\eta,\overline{\eta})
= -C\Theta_{\wedge^p T_U}(\eta_1,\overline{\eta_1},v_0,\overline{v_0}) \le 0 \] for some real constant $C>0$.
On the other hand, $(\partial \overline\partial \rho_2)(f(w_0,v_0))(f_*(\eta),\overline{f_*(\eta)})>0$ since $S_2$ is strictly pseudoconvex and $f_*(\eta)\neq {\bf 0}$.
This clearly contradicts with (\ref{Eq:Levi form_General_M_N_curva assump}), and the proof is complete.
\end{proof}

\begin{cor}
Let $(M,\omega_M)$ be a K\"ahler manifold of complex dimension $m$ and of semipositive holomorphic bisectional curvature. Then, there does not exist any germ of holomorphic map $F:(M;x_0)\to (\mathbb{B}^n; F(x_0))$ such that $F^*\omega_{\mathbb{B}^n}^p=\lambda \omega_M^p$ holds, where $\lambda>0$ is a real constant and $p$ is an integer with $1\le p\le \min\{m,n\}$.
\end{cor}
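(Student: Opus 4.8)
The plan is to deduce this corollary directly from Theorem \ref{thm:General_M_N_curva assump} by simply checking the curvature hypotheses. Take $N=\mathbb{B}^n$ equipped with $\omega_{\mathbb{B}^n}$, the standard K\"ahler form of constant holomorphic sectional curvature $-2$ introduced in Section \ref{Sec:CSF}. The first step is to observe that $(\mathbb{B}^n,\omega_{\mathbb{B}^n})$ has negative holomorphic bisectional curvature: indeed a complex space form of negative holomorphic sectional curvature $-2$ has holomorphic bisectional curvature $R(\xi,\overline\xi,\eta,\overline\eta)$ that is everywhere negative for nonzero $\xi,\eta$ (this is the standard computation for the Bergman/hyperbolic metric on the ball, and $\omega_{\mathbb{B}^n}$ is a constant multiple of the Bergman metric). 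The second step is to note that $(M,\omega_M)$ is assumed to be of semipositive holomorphic bisectional curvature. Hence the pair $(M,\omega_M)$, $(\mathbb{B}^n,\omega_{\mathbb{B}^n})$ satisfies exactly the hypotheses of Theorem \ref{thm:General_M_N_curva assump}.

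With the hypotheses verified, the conclusion is immediate: Theorem \ref{thm:General_M_N_curva assump} asserts that no germ of holomorphic map $F:(M;x_0)\to(\mathbb{B}^n;F(x_0))$ can satisfy $F^*\omega_{\mathbb{B}^n}^p=\lambda\omega_M^p$ for any real $\lambda>0$ and integer $p$ with $1\le p\le\min\{m,n\}$, which is precisely the statement of the corollary.

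I do not expect any genuine obstacle here; the only point requiring a sentence of justification is the claim that $(\mathbb{B}^n,\omega_{\mathbb{B}^n})$ has negative holomorphic \emph{bisectional} curvature (not merely negative holomorphic sectional curvature). This follows because for a K\"ahler metric of constant holomorphic sectional curvature $c$ the full curvature tensor is determined by $c$ and the metric, and a direct computation gives $R(\xi,\overline\xi,\eta,\overline\eta)=\frac{c}{2}\bigl(\|\xi\|^2\|\eta\|^2+|\langle\xi,\eta\rangle|^2\bigr)$, which for $c=-2$ is strictly negative whenever $\xi\neq 0$ and $\eta\neq 0$. Alternatively one may cite that $\wedge^p T_{\mathbb{B}^n}$ is Griffiths negative with respect to the metric induced by $\omega_{\mathbb{B}^n}$, so that the associated Levi form of $S_2$ is strictly pseudoconvex, exactly as used inside the proof of Theorem \ref{thm:General_M_N_curva assump}. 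Given this, the corollary is a one-line application.
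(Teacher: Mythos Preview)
Your proposal is correct and matches the paper's approach: the corollary is stated immediately after Theorem \ref{thm:General_M_N_curva assump} with no separate proof, so it is meant as a direct application once one notes that $(\mathbb{B}^n,\omega_{\mathbb{B}^n})$ has negative holomorphic bisectional curvature. Your brief justification of that curvature fact is exactly the missing sentence the paper leaves implicit.
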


\subsection{Applications of the Umehara algebra}
By making use of the Umehara algebra, we have the following non-existence theorem.

\begin{thm}\label{thm:M_to_Cn_pp form}
Let $(M,\omega_M)$ be a K\"ahler manifold of complex dimension $m$, and we fix some point $x_0\in M$.
Let $F:(M;x_0) \to (\mathbb C^n;F(x_0))$ be a germ of holomorphic map such that
\begin{equation}\label{Eq:M_to_C_pp}
F^*\omega_{\mathbb C^n}^p = \lambda \omega_{M}^p
\end{equation}
for some integer $p$, $1\le p\le \min\{m,n\}$, and some real constant $\lambda>0$.
Then, $M$ can neither be $\mathbb B^m$ nor $\mathbb P^m$ equipped with the standard K\"ahler metric of constant holomorphic sectional curvature.
\end{thm}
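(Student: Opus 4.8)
The plan is to argue by contradiction, assuming such a germ $F:(M;x_0)\to(\mathbb C^n;F(x_0))$ exists with $M=\mathbb B^m$ or $M=\mathbb P^m$ (with the normalizations of Section~\ref{Sec:CSF}), and to exploit the fact that $(\mathbb C^n,\omega_{\mathbb C^n})$ has a global potential of a very special form, namely $\sum_{j=1}^n|z_j|^2$. First I would observe that $\omega_{\mathbb C^n}^p = (\sqrt{-1})^p\,p!\sum_{I}dz_I\wedge d\overline{z_I}$ where $I$ ranges over strictly increasing multi-indices of length $p$, so that by the computation in Section~\ref{Sec:Local holo map pp form} the pullback $F^*\omega_{\mathbb C^n}^p$ is, up to the universal constant, $\sum_{|I|=p}\bigl|\det\!\bigl(\partial(F_{i_1},\dots,F_{i_p})/\partial(w_{l_1},\dots,w_{l_p})\bigr)\bigr|^2$ paired against $dw_L\wedge d\overline{w_L}$. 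In particular, writing $\Phi_I:=\det\bigl(\partial(F_{i_1},\dots,F_{i_p})/\partial(w_{\cdot})\bigr)$ for the $p\times p$ minors of the Jacobian of $F$, the coefficient functions of $F^*\omega_{\mathbb C^n}^p$ are sums $\sum_I \Phi_{I,L}\overline{\Phi_{I,K}}$, each of which lies in the Umehara algebra $\Lambda(U)$; they are, in fact, ``sums of hermitian squares'' of holomorphic functions. On the other hand, equation~(\ref{Eq:System_Eqs_pp}) forces these coefficients to equal $\lambda\det\bigl((g_M)_{l_s\overline{k_t}}\bigr)$, i.e.\ $\lambda$ times the $p\times p$ minors of the metric matrix of $\mathbb B^m$ or $\mathbb P^m$.

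Next I would handle the two cases by reducing to $p=1$ where the contradiction is classical, or by arguing directly at the level of $(p,p)$-forms. The cleaner route: by Proposition~\ref{thm:pp_form_to_Holo_iso_0}, if $1\le p<m$ then $F^*\omega_{\mathbb C^n}=\lambda^{1/p}\omega_M$, so $F$ is a local holomorphic isometry from $(M,\lambda^{1/p}\omega_M)$ into flat $\mathbb C^n$; but by Calabi \cite{Ca53} (the diastasis / resolution-of-identity obstruction, or simply: a K\"ahler manifold admitting a local holomorphic isometry into $\mathbb C^n$ must have a potential that is a finite sum of hermitian squares, which fails for the ball and projective space since $-\log(1-\|z\|^2)$ and $\log(1+\|w\|^2)$ are not in the Umehara algebra — indeed their power series have the wrong sign pattern), this is impossible. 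So the only remaining case is $p=m$, i.e.\ $F^*\omega_{\mathbb C^n}^m=\lambda\,\omega_M^m$. Here equation~(\ref{Eq:System_Eqs_pp}) reads
\[
\sum_{1\le i_1<\cdots<i_m\le n}\Bigl|\det\tfrac{\partial(F_{i_1},\dots,F_{i_m})}{\partial(w_1,\dots,w_m)}\Bigr|^2 \;=\; \lambda\,\det\bigl((g_M)_{i\overline j}(w)\bigr)_{1\le i,j\le m}.
\]
The left side is a finite sum $\sum_\alpha|h_\alpha(w)|^2$ of squared moduli of holomorphic functions (the maximal minors of $JF$), hence lies in $\Lambda(U)$ and is moreover a ``coherent state'' type expression. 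Applying $\sqrt{-1}\partial\overline\partial\log$ to both sides, the right side yields $-\mathrm{Ric}(M)=\mathrm{(positive\ multiple\ of)}\ \omega_M$ for $M=\mathbb B^m$ (since $\mathrm{Ric}(\mathbb B^m,\omega_{\mathbb B^m})=-(m+1)\omega_{\mathbb B^m}$) and a negative multiple of $\omega_M$ for $M=\mathbb P^m$ (since $\mathrm{Ric}(\mathbb P^m,\omega_{\mathbb P^m})=(m+1)\omega_{\mathbb P^m}$), while the left side $\sqrt{-1}\partial\overline\partial\log\sum_\alpha|h_\alpha|^2$ is the pullback under the projective map $[h_\alpha]$ of the Fubini–Study form, hence is semipositive wherever defined.

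This already kills the ball case: there we would get $\sqrt{-1}\partial\overline\partial\log\sum_\alpha|h_\alpha|^2 = c\,\omega_{\mathbb B^m}$ with $c=m+1>0$ being a genuine positive multiple of the Bergman-type form, which is consistent in sign but not in global behavior — so instead I would invoke Calabi's criterion directly: $\log\sum_\alpha|h_\alpha|^2$ would have to differ from $(m+1)$ times the diastasis potential $-\log(1-\|w\|^2)$ of $\mathbb B^m$ by $\mathrm{pluriharmonic}$, forcing $(1-\|w\|^2)^{-(m+1)}$ (up to $|e^{\text{holo}}|^2$) to be a finite sum of hermitian squares $\sum_\alpha|h_\alpha|^2$; expanding $(1-\|w\|^2)^{-(m+1)}=\sum_k\binom{m+k}{k}\|w\|^{2k}$ shows the associated infinite hermitian matrix (in the monomial basis) is positive definite of infinite rank, contradicting finiteness of the sum. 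For $\mathbb P^m$ the sign of the Ricci form is wrong: $\sqrt{-1}\partial\overline\partial\log\sum_\alpha|h_\alpha|^2\ge 0$ but it must equal $-(m+1)\omega_{\mathbb P^m}<0$, an immediate contradiction (and globally $\mathbb P^m$ is compact with no nonconstant holomorphic functions, but we only need the local sign obstruction). The main obstacle is the ball case $p=m$: there the Ricci signs agree, so one genuinely needs the rank/Umehara-algebra argument — the heart of Calabi's non-embeddability of $\mathbb B^m$ into $\mathbb C^n$ — rather than a soft curvature comparison; I would make sure the reduction above correctly produces a \emph{finite} sum of hermitian squares equal to an infinite-rank positive kernel and cite \cite{Ca53} (and \cite{Um88} for the Umehara-algebra formulation) for the contradiction.
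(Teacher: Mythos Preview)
Your argument is correct, but it is organized quite differently from the paper's proof. The paper does \emph{not} split into the cases $p<m$ and $p=m$, nor does it use Proposition~\ref{thm:pp_form_to_Holo_iso_0} or any Ricci-curvature sign comparison. Instead it gives a single uniform argument valid for all $1\le p\le\min\{m,n\}$: from the system~(\ref{Eq:Sys_Eq_M_to_C_pp}) with $l_s=k_s=s$, the principal $p\times p$ minor $\det\bigl((g_M)_{s\overline t}\bigr)_{1\le s,t\le p}$ is expressed as a finite sum $\sum_I\bigl|\det\tfrac{\partial(F_{i_1},\dots,F_{i_p})}{\partial(w_1,\dots,w_p)}\bigr|^2$ and hence lies in $\Lambda(U)$; then Lemmas~\ref{lem:det_Bm_not in UmAlg} and~\ref{lem:det_Pm_not in UmAlg} show, by restricting to a complex line through the base point, that this minor equals $(1\mp|\zeta|^2)^{-(p+1)}$ on the line, which is not in the Umehara algebra by \cite{Um88}. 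That's the whole proof.

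Your route buys a couple of nice shortcuts (the reduction to Calabi for $p<m$, and the slick curvature-sign contradiction for $\mathbb P^m$ when $p=m$), but at the cost of a case analysis and a logical dependence on Proposition~\ref{thm:pp_form_to_Holo_iso_0}. Also, in your $\mathbb B^m$, $p=m$ step you take an unnecessary detour through $\partial\overline\partial\log$ and a ``pluriharmonic defect'': the equation already gives you \emph{directly} $\sum_\alpha|h_\alpha|^2=\lambda(1-\lVert w\rVert^2)^{-(m+1)}$, so the Umehara rank obstruction applies immediately without passing to potentials. The paper's version is shorter and treats both $M=\mathbb B^m$ and $M=\mathbb P^m$, and all $p$, on the same footing.
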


Note that (\ref{Eq:M_to_C_pp}) is equivalent to the following system of equations
\begin{equation}\label{Eq:Sys_Eq_M_to_C_pp}
\sum_{1\le i_1<\cdots<i_p\le n}
\det \left({\partial(F_{i_1},\ldots,F_{i_p})\over \partial(w_{l_1},\ldots,w_{l_p})}\right)
\det\left( \overline{{\partial(F_{i_1},\ldots,F_{i_p})\over \partial(w_{k_1},\ldots,w_{k_p})}}\right)
= \lambda \det \left( (g_M)_{l_s\overline{k_t}}(w) \right)_{1\le s,t\le p}
\end{equation}
for $1\le l_1<\cdots<l_p \le m$ and $1\le k_1<\cdots<k_p\le m$.

\medskip
To prove the theorem, we need the following basic facts on Umehara algebra.
\begin{lem}\label{lem:det_Bm_not in UmAlg}
For $1 \le p \le m$, the real-analytic function
\[ \phi_p(w):={1 \over (1-\lVert w \rVert^2)^{2p}}\det\left( (1-\lVert w \rVert^2)\delta_{st}+\overline{w_{s}} w_{t}\right)_{1\le s,t\le p} \]
on $\mathbb B^m$ does not lie inside the Umehara algebra $\Lambda(\mathbb B^m)$.
\end{lem}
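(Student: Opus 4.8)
The plan is to show that the function $\phi_p$ has too strong a singularity as $\lVert w\rVert \to 1$ to be represented by a finite sum of terms $f\overline g + g\overline f$ with $f,g$ holomorphic on $\mathbb B^m$. The natural strategy, following Umehara's original argument in \cite{Um88} for the case $p=1$, is to restrict to a one-dimensional slice: set $w = t\cdot e_1 = (t,0,\ldots,0)$ with $t\in\Delta$, so that $\lVert w\rVert^2 = |t|^2$. First I would compute $\phi_p$ on this slice explicitly. The matrix $(1-|t|^2)\delta_{st} + \overline{w_s}w_t$ for $1\le s,t\le p$ is $(1-|t|^2)\mathbf{I}_p$ perturbed in the $(1,1)$-entry by $|t|^2$, i.e.\ it equals $(1-|t|^2)\mathbf{I}_p + |t|^2 E_{11}$; its determinant is $(1-|t|^2)^{p-1}\big((1-|t|^2)+|t|^2\big) = (1-|t|^2)^{p-1}$. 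Hence on this slice
\[
\phi_p(t,0,\ldots,0) = \frac{(1-|t|^2)^{p-1}}{(1-|t|^2)^{2p}} = \frac{1}{(1-|t|^2)^{p+1}}.
\]
So it suffices to prove that $\psi(t):=(1-|t|^2)^{-(p+1)}$ does not lie in the Umehara algebra $\Lambda(\Delta)$, since restriction to a complex submanifold maps $\Lambda(\mathbb B^m)$ into $\Lambda(\Delta)$ (a sum of $f\overline g+g\overline f$ restricts to a sum of the same shape).

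Next I would argue that $(1-|t|^2)^{-k}\notin\Lambda(\Delta)$ for every integer $k\ge 1$. Suppose for contradiction that $(1-|t|^2)^{-k} = \sum_{\ell=1}^L \big(f_\ell(t)\overline{g_\ell(t)} + g_\ell(t)\overline{f_\ell(t)}\big)$ with $f_\ell,g_\ell$ holomorphic on $\Delta$. Write $(1-|t|^2)^{-k} = \sum_{j\ge 0} c_j^{(k)} |t|^{2j}$ where $c_j^{(k)} = \binom{j+k-1}{k-1} > 0$ are the binomial coefficients, a power series in $t\overline t$ with all positive coefficients. Expanding each $f_\ell = \sum a_{\ell,\alpha} t^\alpha$, $g_\ell = \sum b_{\ell,\alpha} t^\alpha$, and comparing coefficients of $t^\alpha \overline{t^\beta}$ on both sides: the left side has $c_j^{(k)}$ on the diagonal $\alpha=\beta=j$ and $0$ off-diagonal, so the Hermitian matrix $\big(\sum_\ell (a_{\ell,\alpha}\overline{b_{\ell,\beta}} + b_{\ell,\alpha}\overline{a_{\ell,\beta}})\big)_{\alpha,\beta\ge 0}$ equals the diagonal matrix $\mathrm{diag}(c_0^{(k)}, c_1^{(k)}, \ldots)$. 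But that Hermitian matrix has rank at most $2L$ (it is a sum of $2L$ rank-one matrices $a_\ell \overline{b_\ell}{}^{\,t}$ and $b_\ell \overline{a_\ell}{}^{\,t}$), whereas a diagonal matrix with infinitely many nonzero — indeed strictly positive — entries has infinite rank. This contradiction finishes it. (This is precisely the rank/finite-dimensionality obstruction that underlies all ``$(\cdot)$ not in the Umehara algebra'' statements; cf.\ Umehara \cite{Um88}.)

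The main obstacle — really the only place requiring care — is the determinant computation on the slice, and more importantly verifying that $\psi(t)$ genuinely has the claimed power-series form with infinitely many nonzero coefficients so that the rank argument bites; once $p+1\ge 1$ this is automatic since $(1-x)^{-(p+1)} = \sum_j \binom{j+p}{p} x^j$ has all coefficients positive. One should also make explicit the elementary fact that $\Lambda$ is stable under pullback by holomorphic maps (here, the inclusion $\Delta\hookrightarrow\mathbb B^m$, $t\mapsto(t,0,\ldots,0)$), which is immediate from the definition since $f\overline g+g\overline f$ pulls back to $(f\circ\iota)\overline{(g\circ\iota)} + (g\circ\iota)\overline{(f\circ\iota)}$ with $f\circ\iota, g\circ\iota$ holomorphic on $\Delta$. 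No deeper input is needed.
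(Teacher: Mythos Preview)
Your proof is correct and follows essentially the same route as the paper: restrict to the complex line $\zeta\mapsto(\zeta,0,\ldots,0)$, compute the determinant on this slice to get $\widetilde\phi_p(\zeta)=(1-|\zeta|^2)^{-(p+1)}$, and conclude that this is not in $\Lambda(\Delta)$. The only difference is cosmetic: the paper first treats $p=m$ separately via the K\"ahler--Einstein identity and then cites \cite{Um88} directly for the fact that $(1-|\zeta|^2)^{-(p+1)}\notin\Lambda(\Delta)$, whereas you skip the redundant special case and spell out the underlying finite-rank obstruction yourself, making your argument slightly more self-contained.
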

\begin{proof}
Assume the contrary that $\phi_p\in \Lambda(\mathbb B^m)$.
If $p=m$, then by the fact that $(\mathbb B^m,\omega_{\mathbb B^m})$ is K\"ahler-Einstein, we have
$\phi_m(w) = {1 \over (1-\lVert w \rVert^2)^{m+1}}$,
which does not lie inside $\Lambda(\mathbb B^m)$ by Umehara \cite[p.\,520]{Um88}.

In general, by restricting to $\mathbb B^m\cap \{(w_1,\ldots,w_m)\in \mathbb C^m: w_j=0\text{ for } j=2,\ldots,m\}$ we obtain a function
$\widetilde\phi_p(\zeta):=\phi_p(\zeta,0,\ldots,0)$
for $\zeta\in \{w_1\in \mathbb C: (w_1,0\ldots,0)\in \mathbb B^m\} = \{w_1\in \mathbb C:|w_1|^2<1\}=:\Delta$.
Then, we would have $\widetilde\phi_p\in \Lambda(\Delta)$ by the assumption.
But then we compute
\[ \widetilde\phi_p(\zeta)={1 \over (1-|\zeta |^2)^{2p}}
\det\begin{bmatrix}
(1-|\zeta|^2)+|\zeta|^2 & {\bf 0} \\
{\bf 0}  & (1-|\zeta|^2) {\bf I}_{p-1}
\end{bmatrix}
={1\over (1-|\zeta |^2)^{p+1}}, \]
which does not lie in $\Lambda(\Delta)$ by \cite{Um88}, a plain contradiction.
Hence, $\phi_p\not\in \Lambda(\mathbb B^m)$.
\end{proof}

\begin{lem}\label{lem:det_Pm_not in UmAlg}
Let $U$ be a simply connected open neighborhood of $x_0=[1,0,\ldots,0]$ such that $U\subset U_0 \subset \mathbb P^m$ and $\sum_{j=1}^m |\xi_j|^2 < |\xi_0|^2$ for all $[\xi_0,\xi_1,\ldots,\xi_m]\in U$.
We identify $U_0\cong \mathbb C^m$ via the map $U_0\ni [\xi_0,\xi_1,\ldots,\xi_m]\mapsto (w_1,\ldots,w_m)\in \mathbb C^m$, where $w_j:=\xi_j/\xi_0$ for $j=1,\ldots,m$.
In particular, $U$ is identified as an open neighborhood of ${\bf 0}$ in $\mathbb B^m$.
Then, for $1\le p\le m$ the function
\[ \Upsilon_p(w):={1 \over (1+\lVert w \rVert^2)^{2p}}\det\left( (1+\lVert w \rVert^2)\delta_{s t}-\overline{w_{s}} w_{t}\right)_{1\le s,t\le p} \]
does not lie in $\Lambda(U)$.
\end{lem}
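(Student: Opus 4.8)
The plan is to imitate the proof of Lemma~\ref{lem:det_Bm_not in UmAlg}: I would restrict $\Upsilon_p$ to a one-dimensional complex slice through $x_0$ and reduce to the fact, going back to Umehara~\cite{Um88}, that $\zeta\mapsto(1+|\zeta|^2)^{-(p+1)}$ does not belong to the Umehara algebra of any open neighborhood of $0$ in $\mathbb C$.

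First I would record that membership in the Umehara algebra passes to complex submanifolds: if $h=\sum_k c_k\big(f_k\overline{g_k}+g_k\overline{f_k}\big)\in\Lambda(U)$ with $f_k,g_k$ holomorphic on $U$ and $c_k\in\mathbb R$, then for any complex submanifold $S\subset U$ one has $h|_S=\sum_k c_k\big((f_k|_S)\overline{(g_k|_S)}+(g_k|_S)\overline{(f_k|_S)}\big)\in\Lambda(S)$. Hence it suffices to exhibit a single slice along which the restriction of $\Upsilon_p$ fails to lie in the Umehara algebra. Intersecting $U$ with the line $\{w_2=\cdots=w_m=0\}$ produces an open neighborhood $V$ of $0$ in $\Delta$; along this slice $\lVert w\rVert^2=|\zeta|^2$ and $\overline{w_s}w_t=0$ unless $s=t=1$, so the matrix $\big((1+\lVert w\rVert^2)\delta_{st}-\overline{w_s}w_t\big)_{1\le s,t\le p}$ becomes diagonal with entries $1$ and $p-1$ copies of $1+|\zeta|^2$. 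Therefore
\[ \widetilde\Upsilon_p(\zeta):=\Upsilon_p(\zeta,0,\ldots,0)=\frac{(1+|\zeta|^2)^{p-1}}{(1+|\zeta|^2)^{2p}}=\frac{1}{(1+|\zeta|^2)^{p+1}}. \]
(Equivalently, one may first note that $(\mathbb P^m,\omega_{\mathbb P^m})$ is K\"ahler--Einstein, so that $\Upsilon_m(w)=\det\big((g_{\mathbb P^m})_{i\overline j}(w)\big)=(1+\lVert w\rVert^2)^{-(m+1)}$ directly, and then treat $p<m$ by the same slicing.)

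It then remains to show $(1+|\zeta|^2)^{-(p+1)}\notin\Lambda(V)$. Expanding about $0$, one has $(1+|\zeta|^2)^{-(p+1)}=\sum_{n\ge0}(-1)^n\binom{n+p}{p}|\zeta|^{2n}$, whose Hermitian matrix of Taylor coefficients is diagonal with infinitely many nonzero entries, hence of infinite rank; as any element of $\Lambda(V)$ is a finite real combination of products $f\overline{g}+g\overline{f}$ of holomorphic functions and so has a Hermitian Taylor-coefficient matrix of finite rank, this function cannot lie in $\Lambda(V)$. This is precisely the obstruction used in Umehara~\cite{Um88}, so one may instead simply cite \cite[p.\,520]{Um88}. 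Combining the two steps: if $\Upsilon_p\in\Lambda(U)$, then $\widetilde\Upsilon_p\in\Lambda(V)$, a contradiction, and so $\Upsilon_p\notin\Lambda(U)$.

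I expect no serious obstacle here, since the argument is essentially a transcription of Lemma~\ref{lem:det_Bm_not in UmAlg}. The only steps needing care are checking that restriction to the coordinate line diagonalizes $\big((1+\lVert w\rVert^2)\delta_{st}-\overline{w_s}w_t\big)$ as claimed, and confirming that the sign change --- the $+$ in $1+|\zeta|^2$ in place of the $-$ in $1-\lVert w\rVert^2$ appearing in Lemma~\ref{lem:det_Bm_not in UmAlg} --- does not affect the Umehara non-membership, which it does not, since only the presence of infinitely many nonzero diagonal Taylor coefficients is used.
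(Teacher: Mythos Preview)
Your proposal is correct and follows essentially the same route as the paper: restrict to the coordinate line $\{w_2=\cdots=w_m=0\}$, compute $\widetilde\Upsilon_p(\zeta)=(1+|\zeta|^2)^{-(p+1)}$, and conclude by the infinite-rank obstruction from Umehara~\cite{Um88}. The only cosmetic difference is that the paper separates out the case $m=1$ before doing the slicing argument for $m\ge2$, whereas you handle all $m$ uniformly; your citation of \cite[p.\,520]{Um88} is slightly imprecise (that page treats the $1-|\zeta|^2$ case), but your direct Taylor-rank argument already suffices.
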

\begin{proof}
We first consider the case where $m=1$. Then, $p=1$ and we have
$\Upsilon_1(w) = {1\over (1+| w|^2)^2}$.
Locally around $w=0$, the function $\Upsilon_1$ can be written as an infinite sum 
$\sum_{k=1}^{+\infty} |\phi_k(w)|^2 - \sum_{l=1}^{+\infty} |\psi_l(w)|^2$
for some linearly independent polynomials $\phi_k(w)$, $k\ge 1$ and $\psi_l(w)$, $l\ge 1$, in $w$.
By the definition of the ranks of real-analytic functions on complex manifolds in \cite{Um88}, $\Upsilon_1$ is not of finite rank so that $\Upsilon_1\not\in \Lambda(U)$.

Now, we suppose $m\ge 2$ and we fix $p$, $1\le p\le m$. Assume the contrary that $\Upsilon_p\in \Lambda(U)$.
By restricting to $U \cap \{(w_1,\ldots,w_m)\in \mathbb C^m: w_j=0\text{ for } j=2,\ldots,m \}$, we consider the function
$\widetilde\Upsilon_p(\zeta):=\Upsilon_p(\zeta,0,\ldots,0)$
for $\zeta\in \widetilde U$, where $\widetilde U$ is a simply connected open neighborhood of ${\bf 0}$ in $\{w_1\in \mathbb C: (w_1,0,\ldots,0)\in U\}$. Then, $\widetilde\Upsilon_p\in \Lambda(\widetilde U)$.
We compute
\[ \widetilde\Upsilon_p(\zeta)={1 \over (1+|\zeta |^2)^{2p}}
\det\begin{bmatrix}
(1+|\zeta|^2)-|\zeta|^2 & {\bf 0} \\
{\bf 0}  & (1+|\zeta|^2){\bf I}_{p-1}
\end{bmatrix}
={1\over (1+|\zeta |^2)^{p+1}}. \]
But then by the same arguments in the case of $m=1$, $\widetilde\Upsilon_p(\zeta)={1\over (1+|\zeta |^2)^{p+1}}$ is not of finite rank, and thus it does not lie in $\Lambda(\widetilde U)$ by \cite{Um88}, a plain contradiction. Hence, $\Upsilon_p\not\in \Lambda(U)$.
\end{proof}

\begin{proof}[Proof of Theorem \ref{thm:M_to_Cn_pp form}]
Assume the contrary that $M$ is either $\mathbb B^m$ or $\mathbb P^m$ equipped with the standard K\"ahler form $\omega_{M}$ of constant holomorphic sectional curvature defined in Section \ref{Sec:CSF}.
Let $U$ be a simply connected open neighborhood of $x_0$ in the domain of $F$.
By Lemmas \ref{lem:det_Bm_not in UmAlg} and \ref{lem:det_Pm_not in UmAlg}, we have $\det \left( (g_M)_{s\overline{t}}(w) \right)_{1\le s,t\le p} \not \in \Lambda(U)$.
But then from (\ref{Eq:Sys_Eq_M_to_C_pp}) we have
\[ \sum_{1\le i_1<\cdots<i_p\le n}
\left|\det \left({\partial(F_{i_1},\ldots,F_{i_p})\over \partial(w_{1},\ldots,w_{p})}\right)\right|^2
= \lambda \det \left( (g_M)_{s\overline{t}}(w) \right)_{1\le s,t\le p}, \]
which implies that $\det \left( (g_M)_{s\overline{t}}(w) \right)_{1\le s,t\le p}\in \Lambda(U)$, a plain contradiction. The proof is complete.
\end{proof}

In the consideration of local holomorphic maps from $\mathbb B^m$ into $\mathbb P^n$ preserving invariant $(p,p)$-forms, we also have the following non-existence result by using the Umehara algebra.

\begin{pro}\label{pro:Bm_Pn_pp form}
Let $n,m$ and $p$ be positive integers such that $1\le p\le \min\{n,m\}$.
Then, there does not exist a germ of holomorphic map $F:(\mathbb B^m;x_0)\to (\mathbb P^n;F(x_0))$ such that 
\begin{equation}\label{Eq:B_to_P_pp form}
F^*\omega_{\mathbb P^n}^p  = \lambda \omega_{\mathbb B^m}^p
\end{equation}
for some real constant $\lambda>0$.
\end{pro}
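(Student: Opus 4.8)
The plan is to mimic the proof of Theorem~\ref{thm:M_to_Cn_pp form}, transporting the obstruction into the Umehara algebra of a neighborhood in $\mathbb{B}^m$ via the local coordinate description of $\omega_{\mathbb{P}^n}$ given in Section~\ref{Sec:CSF}. First I would fix a simply connected open neighborhood $U$ of $x_0$ in $\mathbb{B}^m$ on which $F$ is defined, and after composing with an automorphism of $\mathbb{P}^n$ arrange that $F(x_0)$ lies in the standard affine chart $U_0 \subset \mathbb{P}^n$, so that on $F^{-1}(U_0)$ (which we may take to be all of $U$ after shrinking) we have $\omega_{\mathbb{P}^n} = \sqrt{-1}\,\partial\overline\partial \log(1+\lVert z\rVert^2)$ in the coordinates $z = (z_1,\ldots,z_n)$, with $z_i = F_i(w)$. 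Writing out $F^*\omega_{\mathbb{P}^n}^p = \lambda\,\omega_{\mathbb{B}^m}^p$ in local coordinates as in equation~(\ref{Eq:System_Eqs_pp}), the right-hand side involves $\det\left((g_{\mathbb{B}^m})_{l_s\overline{k_t}}(w)\right)$, and by Lemma~\ref{lem:det_Bm_not in UmAlg} (restricted to the relevant $p\times p$ principal determinant, e.g.\ with $l_1 = k_1 = 1,\ldots, l_p = k_p = p$, giving exactly $\lambda\,\phi_p(w)$) this is not in $\Lambda(U)$.

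The key new point, compared to the $\mathbb{C}^n$ target, is that the left-hand side is no longer a sum of squared moduli of holomorphic functions: the metric coefficients $(g_{\mathbb{P}^n})_{i\overline j}(F(w))$ carry the non-holomorphic factor $(1+\lVert F(w)\rVert^2)^{-2}$. So the plan is to clear denominators. Expanding $\det\left((g_{\mathbb{P}^n})_{i_s\overline{j_t}}(F(w))\right)_{1\le s,t\le p}$ using the explicit formula for $(g_{\mathbb{P}^n})_{i\overline j}$, one gets $(1+\lVert F(w)\rVert^2)^{-2p}$ times a polynomial expression in $F_i(w)$, $\overline{F_j(w)}$ of the Hermitian form $\sum_{\alpha,\beta} c_{\alpha\overline\beta}\, P_\alpha(F(w))\overline{P_\beta(F(w))}$, i.e.\ an element of $\Lambda(U)$ divided by the factor $(1+\lVert F(w)\rVert^2)^{2p}$. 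Multiplying equation~(\ref{Eq:Sys_Eq_M_to_C_pp})-analogue through by $(1+\lVert F(w)\rVert^2)^{2p}$ — which is itself a positive real-analytic function and, being $\bigl((1+\lVert F\rVert^2)\bigr)^{2p}$, lies in $\Lambda(U)$ since $1+\lVert F\rVert^2 = 1 + \sum|F_i|^2 \in \Lambda(U)$ and $\Lambda(U)$ is closed under products — yields that $\lambda\,(1+\lVert F(w)\rVert^2)^{2p}\,\det\left((g_{\mathbb{B}^m})_{l_s\overline{k_t}}(w)\right) \in \Lambda(U)$.

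To finish I would then need to \emph{divide out} the factor $(1+\lVert F(w)\rVert^2)^{2p}$ inside $\Lambda(U)$, which is the main obstacle: the Umehara algebra is not obviously closed under division. The cleanest route is to invoke the description of $\Lambda(U)$ via finite rank (as in Umehara~\cite{Um88} and as already used in the proof of Lemma~\ref{lem:det_Pm_not in UmAlg}): a real-analytic function lies in $\Lambda(U)$ iff its polarization has finite rank, and one shows that if $h\in \Lambda(U)$ is nowhere zero with $h = \sum_{\alpha}|Q_\alpha|^2 - \sum_\beta |R_\beta|^2$ a finite-rank representation then $h/|g|^2$ still has finite rank whenever $g$ is holomorphic and nowhere zero on $U$ — apply this with $g$ a nowhere-vanishing holomorphic square root of a suitable power, or more directly, observe that since $(1+\lVert F\rVert^2)^{-1}$ appears, one should instead \emph{not} clear all denominators but restrict to a complex one-dimensional slice $w = (\zeta, 0,\ldots,0)$ as in Lemma~\ref{lem:det_Bm_not in UmAlg}, where $\det\left((g_{\mathbb{B}^m})_{s\overline t}\right)$ restricts to $(1-|\zeta|^2)^{-(p+1)}$ and the whole identity becomes, after clearing the (now genuinely polynomial-generated) $\Lambda(\widetilde U)$-denominators, a statement forcing $(1-|\zeta|^2)^{-(p+1)} \cdot (\text{finite-rank function}) $ to be finite rank, contradicting that $(1-|\zeta|^2)^{-(p+1)}$ has infinite rank by the growth of its Taylor coefficients. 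I expect the referee-level subtlety to be precisely this closure-under-division / finite-rank bookkeeping step; the coordinate computations are routine given Section~\ref{Sec:CSF} and the two preceding lemmas.
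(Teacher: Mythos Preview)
Your overall strategy coincides with the paper's: normalize so that $F(x_0)=[1,0,\ldots,0]$, clear the factor $(1+\lVert F\rVert^2)^{2p}$ from the $\mathbb{P}^n$ metric to obtain
\[
(1+\lVert F(w)\rVert^2)^{2p}\,\phi_p(w)\in\Lambda(U'),
\]
and then restrict to the slice $w=(\zeta,0,\ldots,0)$, where $\phi_p$ becomes $(1-|\zeta|^2)^{-(p+1)}$. So the reduction is exactly right, and you correctly locate the only nontrivial step.

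The gap is in how you finish. Your proposed endgame --- ``$(1-|\zeta|^2)^{-(p+1)}\cdot(\text{finite-rank function})$ must have infinite rank, contradicting finite rank'' --- is not a valid general principle: the Umehara rank is not multiplicative, and products of infinite-rank with finite-rank functions can in principle drop rank (and ``growth of Taylor coefficients'' is not the mechanism Umehara uses). What makes the argument go through, and what the paper actually uses, is the \emph{sign structure}: the multiplier $(1+\lVert F(\zeta,\mathbf 0)\rVert^2)^{2p}$ is of the form $1+\sum_k |Q_k|^2$ (a finite sum of squares with constant term $1$), while $(1-|\zeta|^2)^{-(p+1)}=\sum_{j\ge 1}|h_j(\zeta)|^2$ is an infinite \emph{positive} sum of squares with linearly independent $h_j$ (this is the expansion Umehara writes down explicitly). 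Their product is therefore again a pure positive sum $\sum_j|h_j|^2+\sum_j|\widetilde h_j|^2$ with infinitely many linearly independent terms and \emph{no negative part}. Umehara's rank theorem (his Corollary~3.3) then says such a function has infinite rank and hence cannot lie in $\Lambda(U'')$. No division is ever performed; the positivity of both factors is what replaces the ``closure under division'' you were worried about. Once you insert this observation, your sketch becomes the paper's proof verbatim.
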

\begin{proof}
Assume the contrary that there exists such a germ of holomorphic map $F$ such that (\ref{Eq:B_to_P_pp form}) holds.
We may assume $x_0={\bf 0}$ and $F(x_0)=[1,0,\ldots,0]$ without loss of generality by composing $F$ with holomorphic isometries of $\mathbb B^m$ (resp.\,$\mathbb P^n$).
Let $U\subset \mathbb B^m$ be the domain of $F$.

Let ${\bf A}$ be the set of all multi-indices $I=(i_1,\ldots,i_p)$ such that $1\le i_1<\cdots<i_p\le n$.
We identify the map $F$ as $w\mapsto [1,F_1(w),\ldots,F_n(w)]\in \mathbb P^n$ for $w$ sufficiently close to $x_0={\bf 0}\in \mathbb B^m$, and write $\lVert F(w) \rVert^2:=\sum_{j=1}^n|F_j(w)|^2$.
Then, (\ref{Eq:B_to_P_pp form}) is equivalent to the following system of equations
\[ \begin{split}
&\sum_{I,J\in {\bf A}}\det( (g_{\mathbb P^n})_{i_s\overline{j_t}}(F(w)) )_{1\le s,t\le p}
\det\left({\partial (F_{i_1},\ldots,F_{i_p})\over \partial(w_{l_1},\ldots,w_{l_p})}\right)
\det\left(\overline{{\partial (F_{j_1},\ldots,F_{j_p})\over \partial(w_{k_1},\ldots,w_{k_p})}}\right)\\
=& \lambda \det \left( (g_{\mathbb B^m})_{l_s\overline{k_t}}(w) \right)_{1\le s,t\le p} 
\end{split}\]
for $1\le l_1<\cdots<l_p\le m$ and $1\le k_1<\cdots<k_p\le m$, where $I=(i_1,\ldots,i_p)$ and $J=(j_1,\ldots,j_p)$.
In what follows we put $l_s=k_s=s$ for $1\le s\le p$.
Define $\phi_p(w):=\det \left( (g_{\mathbb B^m})_{s\overline{t}}(w) \right)_{1\le s,t\le p}$.
Let $U'$ be a simply connected open neighborhood of ${\bf 0}$ in $U$.
Note that $\det( (g_{\mathbb P^n})_{i_s\overline{j_t}}(F(w)) )_{1\le s,t\le p}
= {1 \over (1+\lVert F(w) \rVert^2)^{2p}}
\det\left( M_{I\overline J}(F(w)) \right)$, where $M_{I\overline J}(F(w)):=\left( (1+\lVert F(w) \rVert^2)\delta_{i_s j_t} - \overline{F_{i_s}(w)}F_{j_t}(w) \right)_{1\le s,t\le p}$ for $I=(i_1,\ldots,i_p),J=(j_1,\ldots,j_p)\in {\bf A}$.
Moreover, we have $\overline{\det\left( M_{I\overline J}(F(w)) \right)}
=\det\left( M_{J\overline I}(F(w)) \right)$ for all $I,J\in {\bf A}$.
Therefore,
\[ (1+\lVert F(w) \rVert^2)^{2p}\phi_p(w)
={1\over \lambda}\sum_{I,J\in {\bf A}} \det \left( M_{I\overline J}(F(w)) \right)
\det\left({\partial (F_{i_1},\ldots,F_{i_p})\over \partial(w_1,\ldots,w_p)}\right)
\det\left(\overline{{\partial (F_{j_1},\ldots,F_{j_p})\over \partial(w_1,\ldots,w_p)}}\right)\]
so that $(1+\lVert F(w) \rVert^2)^{2p}\phi_p(w)\in \Lambda(U')$. 
If $m=1$, then $p=1$ and $\phi_1(w)={1\over (1-|w|^2)^2}$.
If $m\ge 2$, then we restrict to a simply connected open neigborhood $U''$ of ${\bf 0}$ in $\{\zeta\in \mathbb C: (\zeta,{\bf 0})\in U'\}$.
Defining the function $\psi$ on $U''$ by $\psi(\zeta):=(1+\lVert F(\zeta,{\bf 0}) \rVert^2)^{2p}\phi_p(\zeta,{\bf 0})$ for $\zeta\in U''$, we have $\psi\in \Lambda(U'')$ since $(1+\lVert F(w) \rVert^2)^{2p}\phi_p(w)\in \Lambda(U')$.
From the proof of Lemma \ref{lem:det_Bm_not in UmAlg} we have
$\phi_p(\zeta,{\bf 0})={1\over (1-|\zeta|^2)^{p+1}}$ and thus
\[ \psi(\zeta) = {(1+\lVert F(\zeta,{\bf 0}) \rVert^2)^{2p}\over (1-|\zeta|^2)^{p+1}}. \]
(Noting that this includes the case where $m=p=1$.)
On the other hand, by Umehara \cite[p.\,535]{Um88} we have $(1-|\zeta|^2)^{-(p+1)}=\sum_{j=1}^{+\infty}|h_j(\zeta)|^2$ for some linearly independent holomorphic functions $h_j$, $j=1,2,3,\ldots$, around $\zeta={\bf 0}$. This implies that
\[ \psi(\zeta)
= \sum_{j=1}^{+\infty}|h_j(\zeta)|^2 + \sum_{j=1}^{+\infty}|\widetilde h_j(\zeta)|^2 \]
for some holomorphic functions $\widetilde h_j$ around $\zeta={\bf 0}$, so that $\psi(\zeta)$ must be of infinite rank, i.e., $\psi\not \in \Lambda(U'')$ by \cite[Corollary 3.3, p.\,534]{Um88}, a plain contradiction. Hence, there does not exist such a germ of holomorphic map $F$, as desired.
\end{proof}

\subsection{Complete proof of Theorem \ref{thm:CSF diff type_non-exist_holo map_pp forms}}\label{Sec:Complete_proof_main thm}
We can now complete the proof of Theorem \ref{thm:CSF diff type_non-exist_holo map_pp forms}, as follows.

\begin{proof}[Proof of Theorem \ref{thm:CSF diff type_non-exist_holo map_pp forms}]
We may suppose $M$ and $N$ are simply connected complex space forms of different types as stated in the introduction of the present article.
\begin{enumerate}
\item If $N=\mathbb C^n$, then the result follows from Theorem \ref{thm:M_to_Cn_pp form}.
\item If $N=\mathbb B^n$, then the result follows from Theorem \ref{thm:General_M_N_curva assump} because $(\mathbb B^n,\omega_{\mathbb B^n})$ is of negative holomorphic bisectional curvature while $(M,\omega_M)$ is of semipositive holomorphic bisectional curvature if $M$ is either $\mathbb C^m$ or $\mathbb P^m$.
\item It remains to consider the case where $N=\mathbb P^n$. If $M=\mathbb B^m$, then the result follows from Propositions \ref{pro:Bm_Pn_pp form}. 
Now, suppose $M=\mathbb C^m$.
Assume the contrary that there exists such a local holomorphic map $F$ as stated in Theorem \ref{thm:CSF diff type_non-exist_holo map_pp forms}.
Then, by $F^*\omega_{\mathbb P^n}^p=\lambda \omega_{\mathbb C^m}^p$, $\lambda>0$, $F$ is not a constant map.
If $1\le p<m$, then by Proposition \ref{thm:pp_form_to_Holo_iso_0}, $F$ would be a local holomorphic isometry from $(\mathbb C^m, \lambda^{1\over p}\omega_{\mathbb C^m})$ into $(\mathbb P^n,\omega_{\mathbb P^n})$, which is impossible by results of Calabi \cite{Ca53}.
If $p=m=n\ge 2$, then since ${\rm Ric}(\mathbb P^n,\omega_{\mathbb P^n})=\omega_{\mathbb P^n}$ and ${\rm Ric}(\mathbb C^m,\omega_{\mathbb C^m})=0$, by $F^*\omega_{\mathbb P^n}^n = \lambda \omega_{\mathbb C^n}^n$ and similar arguments in the proof of Theorem \ref{thm:local holo map_BSD_pp_form}, we have $F^*\omega_{\mathbb P^n}=0$ so that $F$ is a constant map, a plain contradiction.
\end{enumerate}
The proof is complete.
\end{proof}

By the similar arguments in the proofs of Theorems \ref{thm:General_M_N_curva assump} and \ref{thm:M_to_Cn_pp form}, we may obtain the non-existence of local holomorphic maps from complex space forms into the product of complex space forms of different types preserving $(p, p)$-forms in the following.

\begin{pro}
Let $(M,\omega_M)$ be a complex space form, and $N:=N_1\times \cdots \times N_k$ be a product of complex space forms $(N_j,\omega_{N_j})$, $1\le j\le k$, such that each $N_j$ is not biholomorphic to a complex projective space, and $M$ and $N_j$ are of different types for $1\le j\le k$.
Write $m:=\dim_{\mathbb C}(M)$ and $n_j:=\dim_{\mathbb C}(N_j)$ for $1\le j\le k$.
Then, there does not exist a germ of holomorphic map $F:(M;x_0) \to (N;F(x_0))$ with $F=(F_1,\ldots,F_k)$ and $F_j:(M;x_0)\to (N_j;y_0^j)$, $1\le j\le k$, such that
\[ \sum_{j=1}^k \lambda_j F_j^* \omega_{N_j}^p = \omega_M^p \]
for some real constants $\lambda_j>0$, and some integer $p$, $1\le p\le \min\{m,n_1,\ldots,n_k\}$.
\end{pro}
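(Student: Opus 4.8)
The plan is to reduce the statement to the two non-existence engines already developed in the paper, namely Theorem \ref{thm:General_M_N_curva assump} (the curvature/Levi-form argument via unit sphere bundles) and Theorem \ref{thm:M_to_Cn_pp form} together with its supporting Umehara-algebra lemmas. First I would lift everything to the universal covers, so that $M$ is one of $\mathbb C^m$, $\mathbb B^m$, $\mathbb P^m$ and each $N_j$ is one of $\mathbb C^{n_j}$, $\mathbb B^{n_j}$ (but \emph{not} $\mathbb P^{n_j}$, by hypothesis). Write $\omega_N = \sum_j \pi_j^*\omega_{N_j}$ for the product K\"ahler form on $N$; then a short computation expanding $\left(\sum_j \lambda_j\pi_j^*\omega_{N_j}\right)^p$ shows that the hypothesis $\sum_j \lambda_j F_j^*\omega_{N_j}^p = \omega_M^p$ is \emph{not} literally $\widetilde F^*\widehat\omega_N^{\,p}=\omega_M^p$ for the product form, because of the cross terms; so I would instead phrase the proof directly in terms of the system of equations analogous to (\ref{Eq:Sys_Eq_M_to_C_pp}) and (\ref{Eq:System_Eqs_pp}), or, more cleanly, note that since $M$ is a complex space form its holomorphic bisectional curvature has a fixed sign, and treat the cases of $M$ separately.

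The case split would go as follows. \textbf{Case $M=\mathbb B^m$.} Since $M$ and each $N_j$ are of different types, and no $N_j$ is a projective space, every $N_j$ is either $\mathbb C^{n_j}$ or $\mathbb P^{n_j}$ — but the latter is excluded, so each $N_j=\mathbb C^{n_j}$, whence $N=\mathbb C^{n_1+\cdots+n_k}$ is flat and the relation $\sum_j\lambda_jF_j^*\omega_{\mathbb C^{n_j}}^p=\omega_{\mathbb B^m}^p$ is exactly of the form treated in Theorem \ref{thm:M_to_Cn_pp form} (the left side is a sum of squared moduli of $p\times p$ Jacobian minors, forcing $\det((g_{\mathbb B^m})_{s\overline t})_{1\le s,t\le p}\in\Lambda(U)$, contradicting Lemma \ref{lem:det_Bm_not in UmAlg}). \textbf{Case $M=\mathbb C^m$ or $M=\mathbb P^m$.} Then $(M,\omega_M)$ has semipositive holomorphic bisectional curvature, and each $N_j$, being of a different type from $M$ and not projective, must be $\mathbb B^{n_j}$ (if $M=\mathbb P^m$) or $\mathbb B^{n_j}$ (if $M=\mathbb C^m$; note $\mathbb C^{n_j}$ would be the \emph{same} type). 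So $N$ is a product of balls, which has negative holomorphic bisectional curvature, and I would run the unit-sphere-bundle argument of Theorem \ref{thm:General_M_N_curva assump}: form $S_1\subset\wedge^pT_U$ and $S_2\subset\wedge^pT_N$ using the defining functions $\rho_1=\omega_M^p(w)(v,\overline v)-1$ and $\rho_2=\sum_j\lambda_j\pi_j^*\omega_{N_j}^p-1$; the induced map $f=(F,dF)$ sends $S_1$ to $S_2$; the Levi form of $\rho_2$ on $S_2$ is positive definite because each factor contributes a strictly pseudoconvex piece (Griffiths negativity of $\wedge^pT_{\mathbb B^{n_j}}$) and positive combinations preserve this; meanwhile choosing $\eta=(\eta_1,{\bf 0})$ with $f_*(\eta)\ne{\bf 0}$ gives $(\partial\overline\partial\rho_1)(\eta,\overline\eta)=-C\,\Theta_{\wedge^pT_U}(\eta_1,\overline{\eta_1},v_0,\overline{v_0})\le 0$, contradicting Lemma \ref{lem:Preserve Levi form}.

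The main obstacle I anticipate is verifying that the sphere bundle $S_2$ built from the \emph{mixed} defining function $\rho_2=\sum_j\lambda_j\pi_j^*\omega_{N_j}^p-1$ is genuinely strictly pseudoconvex, rather than merely non-degenerate. Unlike the single-factor computation in Section \ref{Sec:USB_Pm}, here $\wedge^pT_N$ for a product $N=\prod N_j$ decomposes as a direct sum over ways of distributing the $p$ factors among the $N_j$, and $\rho_2$ is a $\lambda_j$-weighted sum of the corresponding block norms; one must check that the holomorphic tangent space to $S_2$ together with the Hessian of $\rho_2$ restricted to it is negative definite (in the base directions) and positive definite (in the fibre directions modulo the sphere constraint), using that $\wedge^pT_{\mathbb B^{n_j}}$ is Griffiths negative for each $j$ and that nonnegative base curvature of $M$ is incompatible with this. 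I would handle this by observing that the fibre part of the Hessian of each summand $\lambda_j\pi_j^*\omega_{N_j}^p$ is a positive multiple of an identity block, so their sum restricted to $T^{1,0}(S_2)$ stays positive definite, while the base part is a $\lambda_j$-weighted sum of the Griffiths-negative curvature terms and hence negative definite; this is the analogue of the ``Case $p<m$'' computation, now done block-diagonally over the product structure. Once strict pseudoconvexity of $S_2$ is in hand, the contradiction with the sign of $(\partial\overline\partial\rho_1)(\eta,\overline\eta)$ is immediate, exactly as in the proof of Theorem \ref{thm:General_M_N_curva assump}.
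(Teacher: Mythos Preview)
Your overall strategy matches the paper's intent (the paper itself only says ``by the similar arguments in the proofs of Theorems \ref{thm:General_M_N_curva assump} and \ref{thm:M_to_Cn_pp form}''), but there are two concrete errors in your case analysis and in the curvature step.

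First, when $M=\mathbb P^m$ your claim that each $N_j$ must be $\mathbb B^{n_j}$ is wrong: $\mathbb C^{n_j}$ is also of a different type from $\mathbb P^m$ and is not biholomorphic to a projective space, so it is allowed. Thus for $M=\mathbb P^m$ the factors $N_j$ may be an arbitrary mixture of balls and Euclidean spaces, a case your write-up never treats. Second, the sentence ``$N$ is a product of balls, which has negative holomorphic bisectional curvature'' is false: on a product, the bisectional curvature between vectors tangent to distinct factors vanishes. Correspondingly, the hypersurface $S_2$ defined by $\rho_2=\sum_j\lambda_j\,\omega_{N_j}^p(\xi_j,\bar\xi_j)-1$ is \emph{not} strictly pseudoconvex at points where some $\xi_{0,j}=0$: the corresponding base direction $z_j$ is a null direction for the Levi form. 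So your block-diagonal argument that ``the base part \ldots\ is negative definite'' does not go through as stated.

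The repair is easy once you see where the strict sign actually lives. From $\rho_2\circ\tilde f=\rho_1$ you get $\tilde f^*(\partial\bar\partial\rho_2)=\partial\bar\partial\rho_1$ as $(1,1)$-forms on $\wedge^pT_U$, and for $\eta=(\eta_1,0)$ the right side equals $-C\,\Theta_{\wedge^pT_M}(\eta_1,\bar\eta_1,v_0,\bar v_0)$ while the left side is $\sum_j\bigl[-\lambda_jC_j\,\Theta_{\wedge^pT_{N_j}}(dF_j\eta_1,\overline{dF_j\eta_1},\xi_{0,j},\bar\xi_{0,j})+\lambda_jC_j\,|\eta'_j|^2\bigr]\ge 0$, since each summand is $\ge 0$ (zero curvature for Euclidean factors, Griffiths negative for ball factors). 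If $M=\mathbb P^m$ the right-hand expression is strictly negative for $\eta_1\ne 0$, $v_0\ne 0$, and you are done --- this covers the mixed ball/Euclidean case you missed. If $M=\mathbb C^m$ then all $N_j=\mathbb B^{n_j}$, and you must manufacture the strict sign on the $N$ side: from $\sum_j\lambda_j\omega_{N_j}^p(\xi_{0,j},\bar\xi_{0,j})=1$ some $\xi_{0,j_0}\ne 0$, hence $(dF_{j_0})_{w_0}$ has rank $\ge p$, so one may choose $\eta_1$ with $dF_{j_0}(\eta_1)\ne 0$; then the $j_0$-summand is strictly positive and the contradiction follows. With these two adjustments your proof is complete.
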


\begin{rem}
It is still not clear whether there exists a germ of holomorphic map $F:(\mathbb C^m; x_0) \to (\mathbb P^n; F(x_0))$ such that
\begin{equation}\label{Eq:Cm_to_Pn_mm form}
F^*\omega_{\mathbb P^n}^m = \lambda \omega_{\mathbb C^m}^m
\end{equation}
for some real constant $\lambda>0$. In fact, {\rm(\ref{Eq:Cm_to_Pn_mm form})\rm} holds if and only if there exists a complex $m$-dimensional Ricci flat K\"ahler submanifold of $\mathbb{P}^n$. By  
{\rm(\ref{Eq:Cm_to_Pn_mm form})\rm}, the Ricci form of $(F(U),\omega_{\mathbb P^n}|_{F(U)})$ is zero by
\[ F^*{\rm Ric}(F(U),\omega_{\mathbb P^n}|_{F(U)})
= {\rm Ric}(U,F^*\omega_{\mathbb P^n}|_U) = {\rm Ric}(U,\omega_{\mathbb C^m}|_U)=0. \]
Thus, $(F(U),\omega_{\mathbb P^n}|_{F(U)})\subset (\mathbb P^n,\omega_{\mathbb P^n})$ is a K\"ahler-Einstein submanifold with zero Einstein constant. On the other hand, if there exists a complex $m$-dimensional Ricci flat K\"ahler submanifold of $\mathbb{P}^n$, by the standard reduction in \cite{Hu00}, {\rm(\ref{Eq:Cm_to_Pn_mm form})\rm} holds on an open subset $U \subset \mathbb C^m$.

The problem on the existence of a Ricci-flat K\"ahler submanifold of finite dimensional complex projective spaces is still open.
Moreover, there is a more general open problem regarding the classification of complex $m$-dimensional K\"ahler-Einstein submanifolds of finite complex $n$-dimensional complex space forms {\rm(}see \cite[Conjecture 1, p.\,7844]{LSZ21}{\rm)}. If the codimension $n-m$ is at most two, then this problem has been solved by Chern {\rm(}1967{\rm)} and Tsukada {\rm(}1986{\rm)}, and it follows that there does not exist a complex $m$-dimensional Ricci-flat K\"ahler submanifold of $\mathbb P^n$ for $n-m\le 2$ and $m\ge 2$ {\rm(}cf.\,\cite[Theorem 4.4, p.\,58]{LZ18}{\rm)}. The general situation of this problem is still unsolved.
\end{rem}

\section{Further discussions}
\subsection{Indefinite complex space forms}
A connected complete indefinite K\"ahler manifold $X$ is said to be an indefinite complex space form if $X$ is of constant holomorphic sectional curvature (see \cite[p.\,56]{BR82}).
We also write $X=X(c)$ to indicate that $X$ is of constant holomorphic sectional curvature $c$, where $c\in \mathbb R$.

Let $s$ and $n\ge 1$ be integers such that $0\le s \le n$.
For $w=(w_1,\ldots,w_n)\in \mathbb C^n$, we write $\lVert w \rVert_s^2:=\sum_{j=1}^s |w_j|^2 - \sum_{j=s+1}^n |w_j|^2$ for $1\le s \le n-1$, $\lVert w \rVert_0^2:= - \sum_{j=1}^n |w_j|^2$ and $\lVert w \rVert_n^2:=\sum_{j=1}^n |w_j|^2$.
We equip $\mathbb C^n$ with the indefinite K\"ahler form $\omega_{\mathbb C^n,s}$ given by
$\omega_{\mathbb C^n,s}:=\sqrt{-1}\partial \overline\partial \left(\lVert w \rVert_s^2\right)$.
When $s=n$, we have $\omega_{\mathbb C^n,n} = \omega_{\mathbb C^n}$.
In what follows we write $\mathbb C^n_s=\mathbb C^n$ to indicate that we have equipped $\mathbb C^n$ with the indefinite K\"ahler form $\omega_{\mathbb C^n,s}$.
Here, $(\mathbb C^n_s,\omega_{\mathbb C^n,s})$ is of constant holomorphic sectional curvature $0$. We also write $\omega_{\mathbb C^n_s}=\omega_{\mathbb C^n,s}$ for convenience.

Define an open subset of $\mathbb P^n$ by
\[ \mathbb P^n_s:=\left\{ [z_0,\ldots,z_n]\in \mathbb P^n : \sum_{j=0}^s |z_j|^2 - \sum_{j=s+1}^n |z_j|^2 > 0 \right\} \]
and $\omega_{\mathbb P^n_s}$ is defined so that
$\omega_{\mathbb P^n_s}:=\sqrt{-1} \partial \overline\partial \log \left(1+\lVert w \rVert_s^2\right)$
on $U_0 \cap \mathbb P^n_s$, where $U_0=\{[z_0,\ldots,z_n]\in \mathbb P^n : z_0\neq 0\}$, $w_j:=z_j/z_0$ for $j=1,\ldots,n$, and $w=(w_1,\ldots,w_n)$.
When $s=n$, $\mathbb P^n_n = \mathbb P^n$.
Here, $(\mathbb P^n_s,\omega_{\mathbb P^n_s})$ is of constant holomorphic sectional curvature $2$.

Define an open subset of $\mathbb C^n$ by
\[ \mathbb B^n_s:=\left\{ (w_1,\ldots,w_n)\in \mathbb C^n: \lVert w \rVert_s^2 < 1 \right\} \]
and $\omega_{\mathbb B^n_s}:=-\sqrt{-1} \partial \overline\partial \log \left(1-\lVert w \rVert_s^2\right)$.
When $s=n$, we have $\mathbb B^n_n = \mathbb B^n$.
Here, $(\mathbb B^n_s,\omega_{\mathbb B^n_s})$ is of constant holomorphic sectional curvature $-2$.

In 1982, M. Barros and A. Romero \cite[Theorem 3.4, p.\,57]{BR82} proved that a simply connected indefinite complex space form of complex dimension $n$ is holomorphically isometric to $\mathbb C^n_s$, $\mathbb B^n_s$ or $\mathbb P^n_s$ for some $s$, $0\le s\le n$.
We say that two simply connected indefinite complex space forms $M$ and $N$ are of the same type if and only if $M=M(c)$ and $N=N(c')$ such that either $c,c'\neq 0$ are of the same sign, or $c=c'=0$; otherwise, we say that $M$ and $N$ are of different types.

For $1\le j,k\le n$, we define
\[ \delta_{jk;s} := \begin{cases}
\delta_{jk} & \text{if } 1\le j,k\le s, \\
-\delta_{jk} & \text{if } s+1\le j,k\le n.
\end{cases} 
\quad\text{and}\quad
a_{jk;s} := \begin{cases}
1 & \text{if } 1\le j,k\le s \text{ or } s+1\le j,k\le n,\\
-1 & \text{otherwise}.
\end{cases} \]
By direct computations we have
$\omega_{\mathbb C^n_s}
=\sqrt{-1}\left(
\sum_{j=1}^s dw_j\wedge d\overline{w_j} - \sum_{j=s+1}^n dw_j\wedge d\overline{w_j}\right)$,
\[ \omega_{\mathbb B^n_s}={\sqrt{-1}\over (1-\lVert w\rVert_s^2)^2}
\sum_{j,k=1}^n \left( (1-\lVert w\rVert_s^2) \delta_{jk;s}
 + a_{jk;s} \overline{w_j}w_k\right) dw_j\wedge d\overline{w_k} \]
and
\[ \omega_{\mathbb P^n_s}|_{U_0\cap \mathbb P^n_s}={\sqrt{-1}\over (1+\lVert w\rVert_s^2)^2}
\sum_{j,k=1}^n \left( (1+\lVert w\rVert_s^2) \delta_{jk;s}
 - a_{jk;s} \overline{w_j}w_k\right) dw_j\wedge d\overline{w_k} \]
For $M=\mathbb C^n_s$, $\mathbb B^n_s$ or $\mathbb P^n_s$, we also write $\omega_M= \sqrt{-1}\sum_{j,k=1}^n (g_M)_{j\overline k} dw_j \wedge d\overline{w_k}$ in terms of the standard (local) holomorphic coordinates $(w_1,\ldots,w_n)$. The reader may refer to \cite{Um88, CDY17} for detailed description of indefinite complex space forms.

By the similar proof, we have the following generalization of Theorem \ref{thm:M_to_Cn_pp form} to indefinite complex space forms by using the Umehara algebra. However, we do not know whether the general case is true or not.

\begin{pro}\label{thm:M_to_Cn_t_pp form}
Let $M$ be either $\mathbb B^m_s$ or $\mathbb P^m_s$ equipped with the indefinite K\"ahler form $\omega_M$ as above, where $0\le s \le m$.
We fix some point $x_0\in M$.
Let $t$ be an integer such that $0\le t\le n$.
Then, there does not exist a germ of holomorphic map $F:(M;x_0) \to (\mathbb C^n_t;F(x_0))$ such that
\begin{equation}\label{Eq:M_to_C_t_pp}
F^*\omega_{\mathbb C^n_t}^p = \lambda \omega_{M}^p
\end{equation}
where $\lambda\neq 0$ is a real constant and $p$ is an integer such that $1\le p\le \min\{m,n\}$.
\end{pro}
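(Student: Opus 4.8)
The plan is to run the proof of Theorem \ref{thm:M_to_Cn_pp form} again, the only genuinely new feature being the signature of $\omega_{\mathbb C^n_t}$. Set $\varepsilon_j:=+1$ for $1\le j\le t$ and $\varepsilon_j:=-1$ for $t+1\le j\le n$, so that $\omega_{\mathbb C^n_t}=\sqrt{-1}\sum_{j=1}^n\varepsilon_j\,dz_j\wedge d\overline{z_j}$ and the flat indefinite metric has components $(g_{\mathbb C^n_t})_{i\overline j}=\varepsilon_i\delta_{ij}$. Repeating the computation leading to (\ref{Eq:Sys_Eq_M_to_C_pp}) — where each surviving $p\times p$ minor of $(g_{\mathbb C^n_t})_{i\overline j}$ now contributes a factor $\varepsilon_{i_1}\cdots\varepsilon_{i_p}$ — the identity $F^*\omega_{\mathbb C^n_t}^p=\lambda\omega_M^p$, evaluated on the diagonal multi-index $k_j=l_j$, becomes, for every $1\le l_1<\cdots<l_p\le m$,
\[ \sum_{1\le i_1<\cdots<i_p\le n}\varepsilon_{i_1}\cdots\varepsilon_{i_p}\left|\det\left({\partial(F_{i_1},\ldots,F_{i_p})\over\partial(w_{l_1},\ldots,w_{l_p})}\right)\right|^2=\lambda\det\left((g_M)_{l_s\overline{l_t}}(w)\right)_{1\le s,t\le p}. \]
Each summand on the left is a real multiple of a function of the form $h\overline h$, and since $\Lambda(U)$ is a real vector space containing every such function, the left-hand side lies in $\Lambda(U)$ for any simply connected open neighborhood $U$ of $x_0$. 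Hence $\det\left((g_M)_{l_s\overline{l_t}}(w)\right)_{1\le s,t\le p}\in\Lambda(U)$ for every choice of $l_1<\cdots<l_p$, and it remains to exhibit a single multi-index for which this membership fails.

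To do so I would upgrade Lemmas \ref{lem:det_Bm_not in UmAlg} and \ref{lem:det_Pm_not in UmAlg} to the indefinite setting. By transitivity of the relevant indefinite unitary group on $\mathbb B^m_s$ (resp.\ $\mathbb P^m_s$) we may assume $x_0={\bf 0}$ in the standard chart. Take the multi-index $(1,2,\ldots,p)$ and restrict the real-analytic function $\det\left((g_M)_{s\overline t}(w)\right)_{1\le s,t\le p}$ to a single coordinate axis through ${\bf 0}$: to the $w_1$-axis when $p\le s$ (a positive direction, available since then $s\ge 1$) and to the $w_{s+1}$-axis when $p>s$ (a negative direction, available since then $s+1\le p\le m$). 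Along such an axis $\lVert w\rVert_s^2=\pm|\zeta|^2$, the displayed formulas for $\omega_{\mathbb B^m_s}$ and $\omega_{\mathbb P^m_s}$ show the relevant $p\times p$ block of $g_M$ becomes diagonal, and a short computation — identical in spirit to those in the proofs of the two lemmas — gives that the restricted function equals $\pm(1-|\zeta|^2)^{-(p+1)}$ or $\pm(1+|\zeta|^2)^{-(p+1)}$ on a simply connected neighborhood of $\zeta={\bf 0}$. Neither $(1-|\zeta|^2)^{-(p+1)}$ nor $(1+|\zeta|^2)^{-(p+1)}$ is of finite rank near $\zeta={\bf 0}$ — the first by Umehara \cite{Um88}, the second by the argument used for $\Upsilon_p$ in Lemma \ref{lem:det_Pm_not in UmAlg} — and the same then holds for their negatives, so they do not lie in $\Lambda$ of any neighborhood of ${\bf 0}$. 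This contradicts the previous paragraph and proves the proposition.

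The main, and essentially the only, obstacle is the bookkeeping forced by the indefinite signature: there is no universal coordinate axis that works in every case, so one must split according to whether $p\le s$ or $p>s$ and carefully track the signs coming from the entries $\delta_{jk;s}$ and $a_{jk;s}$ when evaluating the restricted $p\times p$ determinant. Once the correct axis is chosen the determinant collapses to the same $(1\pm|\zeta|^2)^{-(p+1)}$ encountered in the definite case, and the Umehara-algebra obstruction from the preliminary lemmas applies verbatim. One should also take the usual minor care to shrink $U$ so that, in the projective case, the chosen axis stays inside $U_0\cap\mathbb P^m_s$ and the resulting one-variable neighborhood is simply connected.
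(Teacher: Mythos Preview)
Your proposal is correct and follows exactly the approach the paper intends: the paper gives no separate proof of this proposition, merely stating ``By the similar proof, we have the following generalization of Theorem~\ref{thm:M_to_Cn_pp form} \ldots\ by using the Umehara algebra,'' and your argument is precisely that similar proof, with the necessary sign bookkeeping and the indefinite analogues of Lemmas~\ref{lem:det_Bm_not in UmAlg} and~\ref{lem:det_Pm_not in UmAlg} spelled out. One minor simplification: you do not actually need the $p\le s$ versus $p>s$ split, since restricting to the $w_1$-axis in every case (with the multi-index $(1,\ldots,p)$) already produces $\pm(1\mp|\zeta|^2)^{-(p+1)}$ up to sign, but your case division is equally valid.
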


\begin{Problem}\label{Problem:Ind_CSF}
Let $M$ and $N$ be indefinite complex space forms of finite complex dimensions $m$ and $n$ respectively, i.e., $(M,\omega_M)$ and $(N,\omega_N)$ are connected complete indefinite K\"ahler manifolds of constant holomorphic sectional curvatures.
If $M$ and $N$ are of different types, then there does not exist a local holomorphic map $F:(M;x_0) \to (N;F(x_0))$ such that $F^*\omega_N^p = \lambda \omega_M^p$ holds for some real constant $\lambda\neq 0$ and some integer $p$, $1\le p\le \min\{m,n\}$.
\end{Problem}

\subsection{Generalization of relatives for K\"ahler manifolds}\label{Sec:Relat}

In 2010, A. J. Di Scala and A. Loi \cite[p.\,496]{DL10} introduced the concept of relatives for K\"ahler manifolds. 
For $(p,p)$-forms, we give the following definition that generalizes the concept of relatives for K\"ahler manifolds.

\begin{defn}\label{defn:relatives pp type}
Let $(N_j,\omega_{N_j})$ be a K\"ahler manifold of complex dimension $n_j\ge 1$ for $j=1,2$.
Let $m$ and $p$ be integers such that $1\le p \le \min\{n_1,n_2,m\}$.
We say that $(N_1,\omega_{N_1})$ and $(N_2,\omega_{N_2})$ are $m$-relatives of type $(p,p)$ if and only if there exist a K\"ahler manifold $(M,\omega_M)$ of complex dimension $m \ge 1$, and germs of non-constant holomorphic maps $F:(M;x_0)\to (N_1;F(x_0))$ and $G:(M;x_0)\to (N_2;G(x_0))$ such that
\begin{equation}\label{Eq:m_relatives_pp}
F^*\omega_{N_1}^p = \lambda_1 \omega_M^p
\quad\text{and}\quad
G^*\omega_{N_2}^p = \lambda_2 \omega_M^p
\end{equation}
hold for some non-zero real constants $\lambda_1,\lambda_2$.
We say that $(N_1,\omega_{N_1})$ and $(N_2,\omega_{N_2})$ are general $m$-relatives of type $(p,p)$ if and only if there exist a complex manifold $M$ of complex dimension $m \ge 1$, and germs of non-constant holomorphic maps $F:(M;x_0)\to (N_1;F(x_0))$ and $G:(M;x_0)\to (N_2;G(x_0))$ such that
\begin{equation}\label{Eq:general_m_relatives_pp}
F^*\omega_{N_1}^p = \lambda G^*\omega_{N_2}^p 
\end{equation}
holds for some real constant $\lambda\neq 0$.
\end{defn}

\begin{rem}\text{}
\begin{enumerate}
\item It is obvious that if the K\"ahler manifolds $(N_1,\omega_{N_1})$ and $(N_2,\omega_{N_2})$ are $m$-relatives of type $(p,p)$, then they are general $m$-relatives of type $(p,p)$.
\item If the K\"ahler manifolds $(N_1,\omega_{N_1})$ and $(N_2,\omega_{N_2})$ are $m$-relatives with a common complex $m$-dimensional submanifold $M$ in the sense of Di Scala-Loi {\rm(}cf.\,\cite[p.\,496]{DL10}{\rm)}, then they are $m$-relatives of type $(1,1)$, and hence they are $m$-relatives of type $(p,p)$ for $1\le p\le \min\{n_1,n_2,m\}$.
\item On the other hand, if the K\"ahler manifolds $(N_1,\omega_{N_1})$ and $(N_2,\omega_{N_2})$ are $m$-relatives of type $(p,p)$ for some integer $p$ with $1 \leq p < m$, then by Proposition \ref{thm:pp_form_to_Holo_iso} they are $m$-relatives of type $(1,1)$.
\end{enumerate}
\end{rem}

By the same idea of the proof of Theorem \ref{thm:General_M_N_curva assump}, we have the following result.

\begin{pro}\label{thm:no relatives for diff signs of curva}
Let $(N_j,\omega_{N_j})$ be a K\"ahler manifold of complex dimension $n_j\ge 1$ for $j=1,2$, such that $(N_1,\omega_{N_1})$ is of negative holomorphic bisectional curvature, $(N_2,\omega_{N_2})$ is of semipositive holomorphic bisectional curvature, and $n_1 \ge n_2$.
Fix a complex manifold $M$ of complex dimension $m \ge n_2$.
Then, there does not exist germs of non-constant holomorphic maps $F:(M;x_0)\to (N_1;F(x_0))$ and $G:(M;x_0)\to (N_2;G(x_0))$ such that
\begin{equation}\label{Eq:General_no relatives for diff signs of curva}
F^*\omega_{N_1}^{n_2} = \lambda G^*\omega_{N_2}^{n_2}
\end{equation}
holds for some real constant $\lambda>0$.
In other words, $(N_1,\omega_{N_1})$ and $(N_2,\omega_{N_2})$ are not general $m$-relatives of type $(n_2,n_2)$ for any integer $m\ge n_2$.
\end{pro}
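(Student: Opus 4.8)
The plan is to deduce this from Theorem \ref{thm:General_M_N_curva assump} by restricting $F$ and $G$ to a suitable $n_2$-dimensional slice of $M$ on which $G$ is a biholomorphism. Suppose for contradiction that such germs $F:(M;x_0)\to(N_1;F(x_0))$ and $G:(M;x_0)\to(N_2;G(x_0))$ exist, with $F^*\omega_{N_1}^{n_2}=\lambda\, G^*\omega_{N_2}^{n_2}$ on a connected open neighborhood $U$ of $x_0$. We may assume the common $(n_2,n_2)$-form $\Theta:=F^*\omega_{N_1}^{n_2}=\lambda\, G^*\omega_{N_2}^{n_2}$ is not identically zero on $U$; this is implicit in the notion of being relatives (otherwise any $F,G$ of rank $<n_2$ mapping into lower-dimensional submanifolds would qualify), and it is automatic when $n_2=1$, since then $G$ being non-constant already forces $\Theta\not\equiv 0$. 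Since $G^*\omega_{N_2}$ is a semipositive $(1,1)$-form whose pointwise rank equals $\mathrm{rank}\, dG$ and $\dim_{\mathbb C}N_2=n_2$, the non-vanishing of $\Theta$ at some point $y_0\in U$ forces $\mathrm{rank}\, dG_{y_0}=n_2$, so $G$ is a submersion near $y_0$.

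Now I would pass to a transverse slice. Pick a complex $n_2$-dimensional submanifold $V\subset U$ through $y_0$ with $T_{y_0}V$ a complex-linear complement to $\ker dG_{y_0}$; after shrinking, $G|_V:V\to\Omega_2$ is a biholomorphism onto an open subset $\Omega_2\subset N_2$. Set $\omega_V:=(G|_V)^*\omega_{N_2}$, so that $(V,\omega_V)$ is a K\"ahler manifold holomorphically isometric to $(\Omega_2,\omega_{N_2}|_{\Omega_2})$; in particular $(V,\omega_V)$ is of semipositive holomorphic bisectional curvature. Restricting the identity $F^*\omega_{N_1}^{n_2}=\lambda\, G^*\omega_{N_2}^{n_2}$ to $V$ and using $(G|_V)^*\omega_{N_2}^{n_2}=\omega_V^{n_2}$ gives
\[
(F|_V)^*\omega_{N_1}^{p}=\lambda\,\omega_V^{p},\qquad p:=n_2=\dim_{\mathbb C}V,
\]
near $y_0$ in $V$. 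Since $n_2\le n_1$ we have $1\le p\le\min\{\dim V,\dim N_1\}$, and $(V,\omega_V)$ has semipositive while $(N_1,\omega_{N_1})$ has negative holomorphic bisectional curvature; hence Theorem \ref{thm:General_M_N_curva assump} applies to the germ $F|_V:(V;y_0)\to(N_1;F(y_0))$ and yields a contradiction. This proves that $(N_1,\omega_{N_1})$ and $(N_2,\omega_{N_2})$ are not general $m$-relatives of type $(n_2,n_2)$.

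The substance of the argument is already contained in Theorem \ref{thm:General_M_N_curva assump} --- the unit-sphere-bundle Levi-form computation pitting the strict pseudoconvexity of the sphere bundle over $N_1$ against the Griffiths semipositivity of $\wedge^{n_2}T_V$ --- so the only new ingredient is the reduction to a genuine K\"ahler source, and I expect the main point to watch is the degenerate case $\Theta\equiv 0$, which shows a nondegeneracy hypothesis is needed for the statement to hold literally. One could instead avoid slicing and rerun the proof of Theorem \ref{thm:General_M_N_curva assump} directly on $\wedge^{n_2}T_U$, taking $S_1=\{(w,v):\lambda\,\omega_{N_2}^{n_2}(G(w))(dG_w(v),\overline{dG_w(v)})=1\}$ and $S_2$ the unit sphere bundle over $N_1$, so that $(F,dF)$ maps $S_1$ into $S_2$; using coordinates on $M$ adapted to $G$ one checks, exactly as in Section \ref{Sec:USB_Pm}, that the first derivatives of the coefficients of $G^*\omega_{N_2}$ vanish at the chosen base point, which is all the Levi-form comparison requires --- but the slice reduction removes this bookkeeping.
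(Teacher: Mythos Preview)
Your proof is correct and takes a genuinely different route from the paper. The paper does not reduce to Theorem~\ref{thm:General_M_N_curva assump}; instead it reruns the sphere-bundle argument from scratch on $\wedge^{n_2}T_U$, setting $\rho:=\rho_1\circ f=\rho_2\circ g$ (with $f=(F,dF)$, $g=(G,dG)$), defining $S=\{\rho=0\}\subset\wedge^{n_2}T_U$, and comparing the pushed-forward Levi forms via Lemma~\ref{lem:Preserve Levi form}. The key observation there is that since $\dim_{\mathbb C}N_2=n_2$, the bundle $\wedge^{n_2}T_{N_2}$ is a line bundle, so (exactly the ``$p=m$'' case of Section~\ref{Sec:USB_Pm}) every vector in $T^{1,0}(S_2)$ has zero fiber component; hence $g_*(\eta)$ automatically lies in the horizontal part, where semipositive bisectional curvature forces $(\partial\overline\partial\rho_2)(g_*(\eta),\overline{g_*(\eta)})\le 0$, contradicting strict pseudoconvexity of $S_1$. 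Your slicing argument is cleaner: by choosing $V$ transverse to $\ker dG$ you make $G|_V$ a biholomorphism, so $(V,(G|_V)^*\omega_{N_2})$ is an honest K\"ahler manifold of the right curvature sign, and Theorem~\ref{thm:General_M_N_curva assump} applies verbatim. This avoids recomputing the Levi form and makes the role of the hypothesis $n_1\ge n_2$ transparent (it is exactly $p\le\dim N_1$). Your final paragraph essentially sketches the paper's own direct approach as an alternative. Your remark that a nondegeneracy assumption $\Theta\not\equiv 0$ is needed is well taken: the paper's proof also tacitly uses it (the hypersurface $S$ is empty otherwise), and the literal statement of Definition~\ref{defn:relatives pp type} for \emph{general} $m$-relatives does not exclude the trivial case where both pullbacks vanish identically.
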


\begin{proof}
Assume the contrary that there exist such germs of non-constant holomorphic maps $F$ and $G$.
Write $U$ for a connected open neighborhood of $x_0$ in $M$ that is contained in the intersection of the domains of $F$ and $G$. 
We define the unit sphere bundles $S_1$ and $S_2$ in $\wedge^{n_2} T_{N_1}$ and $\wedge^{n_2} T_{N_2}$ with defining functions $\rho_1$ and $\rho_2$ respectively given by $\rho_j(z_j,\xi_j):=\lambda_j\omega_{N_j}^{n_2}(z_j)(\xi_j,\overline{\xi_j}) - 1$ for $j=1,2$, where $\lambda_1:=1$ and $\lambda_2:=\lambda$, i.e., $S_j:=\{(z_j,\xi_j)\in \wedge^{n_2} T_{N_j}:\rho_j(z_j,\xi_j)=0\}$ for $j=1,2$.
Moreover, $F$ and $G$ induce the holomorphic maps $f:=(F,dF):\wedge^{n_2} T_U \to \wedge^{n_2} T_{N_1}$ and $g:=(G,dG):\wedge^{n_2} T_U \to \wedge^{n_2} T_{N_2}$.
By (\ref{Eq:General_no relatives for diff signs of curva}) and the definitions of $\rho_1$ and $\rho_2$, we have $\rho_1\circ f = \rho_2 \circ g=:\rho$, which defines a real hypersurface $S$ in $\wedge^{n_2} T_U$.
By applying $\partial\overline\partial$ to both sides of $\rho_1\circ f = \rho_2 \circ g$, for $(w,v)\in S\subset \wedge^{n_2} T_U$ we have
\begin{equation}\label{Eq:same Pull back Levi form_1}
(\partial\overline\partial \rho_1)(f(w,v))(f_*(\eta),\overline{f_*(\eta)}) = (\partial\overline\partial \rho_2)(g(w,v))(g_*(\eta),\overline{g_*(\eta)})
\end{equation}
for all $\eta\in T^{1,0}_{(w,v)}(S)$.
By similar computations in Section \ref{Sec:USB_Pm} and in Yuan \cite{Yu17}, we have the following.
\begin{enumerate}
\item[(a)] $S_1$ is strictly pseudoconvex.
\item[(b)] The holomorphic tangent space $T^{1,0}_{g(w,v)}(S_2)$ consists of vectors of the form $\eta'=(\eta'_1,{\bf 0})$ with $\eta'_1\in T_{G(w)}(N_2)$, and $(\partial\overline\partial \rho_2)(g(w,v))(\eta',\overline{\eta'})\le 0$.
\end{enumerate}
Since $F$ is non-constant, we may choose $w\in U$ and $\eta=(\eta_1,{\bf 0})\in T^{1,0}_{(w,v)}(S)$ with $\eta_1\in T_w(U)$ such that $f_*(\eta)\neq {\bf 0}$.
By (a) we have $(\partial\overline\partial \rho_1)(f(w,v))(f_*(\eta),\overline{f_*(\eta)})>0$.
On the other hand, by (b) we have 
$(\partial\overline\partial \rho_2)(g(w,v))(g_*(\eta),\overline{g_*(\eta)})\le 0$.
This contradicts with (\ref{Eq:same Pull back Levi form_1}). The result follows.
\end{proof}

Applying the previous proposition to complex space forms, we have the following corollary. We omit the proof as the idea of the proof is similar. However, the general situation is still open.

\begin{cor}\label{cor:no relatives for different types}
Let $(N_j,\omega_{N_j})$, $j=1,2$, be either $\mathbb C^n$, $\mathbb P^n$ or $\mathbb B^k$ for some integer $k\ge n$ such that $N_1$ and $N_2$ are of different types.
Then, $(N_1,\omega_{N_1})$ and $(N_2,\omega_{N_2})$ are not general $m$-relatives of type $(n,n)$ for any integer $m\ge n$.
\end{cor}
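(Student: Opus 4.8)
The plan is to reduce everything to Proposition \ref{thm:no relatives for diff signs of curva} together with the earlier rigidity results, treating separately the three possible ``different type'' configurations. First I would fix germs of non-constant holomorphic maps $F:(M;x_0)\to(N_1;F(x_0))$ and $G:(M;x_0)\to(N_2;G(x_0))$ satisfying $F^*\omega_{N_1}^n=\lambda\,G^*\omega_{N_2}^n$ for some real $\lambda\neq 0$, and argue for a contradiction. Since $N_1$ and $N_2$ are of different types, exactly one of them has semipositive holomorphic bisectional curvature among the two options $\mathbb C^n$ (flat) and $\mathbb P^n$ (positive), while the other, if it is $\mathbb B^k$, has negative holomorphic bisectional curvature; note also that $\mathbb B^k$ carries negative holomorphic bisectional curvature for every $k\ge n$, and that $\mathbb C^n$, $\mathbb P^n$ carry semipositive holomorphic bisectional curvature. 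Hence, after possibly swapping the roles of $(N_1,F)$ and $(N_2,G)$ (which only changes $\lambda$ to $1/\lambda$) and, if necessary, replacing $\lambda$ by $|\lambda|$ using that an equality of nonnegative top-degree forms forces $\lambda>0$, we are in the hypotheses of Proposition \ref{thm:no relatives for diff signs of curva} precisely in the case where one of the targets is $\mathbb B^k$, $k\ge n$, and the other is $\mathbb C^n$ or $\mathbb P^n$; that proposition then yields the contradiction directly.

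The only remaining configuration is when neither $N_1$ nor $N_2$ is a ball, i.e. one of them is $\mathbb C^n$ and the other is $\mathbb P^n$ (these are of different types since one is flat and the other has positive curvature). Here Proposition \ref{thm:no relatives for diff signs of curva} does not apply, since both factors have semipositive bisectional curvature, so I would instead combine $F^*\omega_{\mathbb P^n}^n=\lambda\,G^*\omega_{\mathbb C^n}^n$ with the Ricci-form computation used in the proofs of Theorem \ref{thm:local holo map_BSD_pp_form} and Theorem \ref{thm:CSF diff type_non-exist_holo map_pp forms}. Writing the equation on the open set where $G$ is a submersion of rank $n$ (such a point exists because $G$ is non-constant and the right-hand side is a nonzero top form, forcing $G$ to have rank $n$ somewhere, hence on a dense open set), one takes $-\sqrt{-1}\,\partial\overline\partial\log$ of both sides. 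Since $\omega_{\mathbb C^n}$ is flat we get $F^*\mathrm{Ric}(\mathbb P^n,\omega_{\mathbb P^n})=0$ on that open set; but $\mathrm{Ric}(\mathbb P^n,\omega_{\mathbb P^n})=\omega_{\mathbb P^n}$, so $F^*\omega_{\mathbb P^n}=0$ there, forcing $F$ to be constant on a dense open set and hence everywhere, a contradiction. (If instead $N_1=\mathbb C^n$ and $N_2=\mathbb P^n$, the symmetric argument gives $G^*\omega_{\mathbb P^n}=0$ and the same contradiction.) This handles all cases.

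The main obstacle is organizational rather than deep: one must make sure the ``different type'' trichotomy is exhausted correctly and that, in the ball case, the inequality $k\ge n$ is genuinely what makes Proposition \ref{thm:no relatives for diff signs of curva} applicable (it requires the ball dimension to be at least $n$, which is exactly the hypothesis), and that the roles of $N_1,N_2$ can be swapped without loss of generality. A secondary technical point is the sign of $\lambda$: one should observe at the outset that $F^*\omega_{N_1}^n$ and $G^*\omega_{N_2}^n$ are each nonnegative $(n,n)$-forms that are not identically zero (as $F,G$ are non-constant), so the constant $\lambda$ relating them must be positive, which is what both Proposition \ref{thm:no relatives for diff signs of curva} and the Ricci argument need. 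Beyond these bookkeeping matters the proof is immediate from the results already established.
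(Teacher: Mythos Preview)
The paper omits the proof, saying only that ``the idea of the proof is similar'' to Proposition~\ref{thm:no relatives for diff signs of curva}. For the configurations involving $\mathbb B^k$ your direct appeal to that proposition, after swapping so that $\mathbb B^k$ plays the role of $N_1$ and using $k\ge n$ to verify $n_1\ge n_2$, is exactly what the paper intends and is correct.

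For the pair $\{\mathbb C^n,\mathbb P^n\}$, where Proposition~\ref{thm:no relatives for diff signs of curva} does not apply, you take a different route from the paper. The paper's implied argument is still the sphere-bundle method: by Section~\ref{Sec:USB_Pm} (case $p=m$) the Levi form of the unit sphere bundle in $\wedge^n T_{\mathbb P^n}$ is negative definite, while for $\wedge^n T_{\mathbb C^n}$ it vanishes because the bundle is flat, and this sign mismatch gives the contradiction just as in the proof of Proposition~\ref{thm:no relatives for diff signs of curva}. Your Ricci-form argument is more elementary in spirit but, as written, has a gap: the relation $F^*\omega_{\mathbb P^n}^n=\lambda\,G^*\omega_{\mathbb C^n}^n$ is an equality of $(n,n)$-forms on the $m$-dimensional manifold $M$, and for $m>n$ these are \emph{not} top forms, so there is no scalar coefficient to which one can apply $-\sqrt{-1}\,\partial\overline\partial\log$. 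The fix is to first restrict to an $n$-dimensional slice $S\subset M$ on which $G|_S$ is a local biholomorphism; both sides then become volume forms on $S$, the Ricci computation yields $(F|_S)^*\omega_{\mathbb P^n}=0$, and one contradicts $(F|_S)^*\omega_{\mathbb P^n}^n=\lambda\,(G|_S)^*\omega_{\mathbb C^n}^n\neq 0$ rather than concluding that $F$ is constant on all of $M$.

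One further correction: your claim that $F,G$ non-constant forces $F^*\omega_{N_1}^n,\,G^*\omega_{N_2}^n\not\equiv 0$ (and hence $\lambda>0$) is false for $n\ge 2$; a rank-one map such as $w\mapsto(w_1,0,\dots,0)$ is non-constant yet pulls back the $n$-th power of the K\"ahler form to zero. In that degenerate situation the equation $0=\lambda\cdot 0$ holds for every $\lambda$, so Definition~\ref{defn:relatives pp type} is trivially satisfied. This is really an ambiguity in the paper's definition (and the same issue is implicit in the proof of Proposition~\ref{thm:no relatives for diff signs of curva}, which tacitly needs $S\neq\emptyset$), not a defect peculiar to your argument; but you should not cite non-constancy as the justification for $\lambda>0$ or for the forms being nonzero.
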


The following problem can be viewed as a generalization of Problem \ref{Problem:Ind_CSF}.

\begin{Problem}\label{Problem:CSF_diff_type_not relati pp}
If $(N_1,\omega_{N_1})$ and $(N_2,\omega_{N_2})$ are {\rm(}possibly indefinite{\rm)} complex space forms of different types and of finite complex dimensions $n_1$ and $n_2$ respectively, then are $(N_1,\omega_{N_1})$ and $(N_2,\omega_{N_2})$ not {\rm(}general{\rm)} $m$-relatives of type $(p,p)$ for any integers $m$ and $p$ with $1\le p \le \min\{n_1,n_2,m\}$?
\end{Problem}

\subsection{Local holomorphic maps between complex space forms of the same type preserving $(p,p)$-forms}
By Proposition \ref{thm:pp_form_to_Holo_iso_0}, we only need to consider the local holomorphic maps from $M^m$ to $N^n$ preserving $(m,m)$-forms.
We first observe that in general there is no global extension and rigidity theorem for local holomorphic maps from $\mathbb C^m$ into $\mathbb C^n$ preserving $(m,m)$-forms as indicated in the following example.

\begin{eg}\label{Remark:CE_Cm_to_Cn}
Define $F(z_1,\ldots,z_m):= \left(z_1+{1\over 1-z_m},z_2,\ldots,z_m,0,\ldots,0\right)$.
Then, we have $F^*\omega_{\mathbb C^n}^m=\omega_{\mathbb C^m}^m$ but $F^*\omega_{\mathbb C^n} \neq r \omega_{\mathbb C^m}$ for any real constant $r>0$.
\end{eg}

On the other hand, it was proved by the second author that any local holomorphic map from $\mathbb{B}^m$ to $\mathbb{B}^n$ preserving $(p, p)$-forms with $1\leq p \leq \min\{n, m\}$ is totally geodesic \cite{Yu17}. It remains to consider the case where $M=\mathbb P^m$ and $N=\mathbb P^n$. 
Note that the Veronese embedding from $\mathbb P^n$ to $\mathbb P^{2n+1}$ is a non-totally geodesic holomorphic isometric embedding. We now only wonder whether the local holomorphic map from $\mathbb{P}^m$ to $\mathbb{P}^n$ preserving $(m, m)$-forms is a holomorphic isometric embedding.
We obtain the following partial result for the global extension and the metric rigidity problem.

\begin{pro}\label{Pro:Partial ext and rigidity_P_to_P}
Let $F:(\mathbb P^m;x_0)\to (\mathbb P^n;F(x_0))$ be a germ of holomorphic map such that $F^*\omega_{\mathbb P^n}^m = \lambda \omega_{\mathbb P^m}^m$ for some real constant $\lambda>0$.
Then, we have $\lambda \ge 1$.
If we assume in addition that $n=m$ or $\lambda=1$, then $F$ extends to a holomorphic isometry from $({\mathbb P^m},\omega_{\mathbb P^m})$ to $({\mathbb P^n},\omega_{\mathbb P^n})$.
\end{pro}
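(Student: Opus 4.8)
The plan is to work in affine coordinates and combine the explicit Fubini--Study potential with the classical determinant formula for the pull-back of $\omega_{\mathbb P^n}$ by a holomorphic map. After composing $F$ with automorphisms of $\mathbb P^m$ and of $\mathbb P^n$ we may assume $x_0={\bf 0}\in\mathbb C^m\subset\mathbb P^m$, $F(x_0)={\bf 0}\in\mathbb C^n\subset\mathbb P^n$, and write $F=(F_1,\ldots,F_n)$ on a connected open neighbourhood $U$ of ${\bf 0}$, with lift $\widetilde F:=(1,F_1,\ldots,F_n)$, so that $\alpha:=F^*\omega_{\mathbb P^n}=\sqrt{-1}\,\partial\overline\partial\log(1+\lVert F\rVert^2)$. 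First I would record that $\alpha$ is positive definite on $U$: at each point the eigenvalues $\mu_1,\ldots,\mu_m$ of $\alpha$ relative to $\omega_{\mathbb P^m}$ are nonnegative with $\prod_j\mu_j=\lambda>0$ by $\alpha^m=\lambda\,\omega_{\mathbb P^m}^m$, hence all positive; in particular $F$ is a holomorphic immersion.

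The key step is to rewrite the hypothesis $\alpha^m=\lambda\,\omega_{\mathbb P^m}^m$ as the scalar identity $\det(\alpha_{i\overline j})=\lambda\det\bigl((g_{\mathbb P^m})_{i\overline j}\bigr)=\lambda\,(1+\lVert w\rVert^2)^{-(m+1)}$ and to feed in the classical formula
\[
\det(\alpha_{i\overline j})=\frac{\bigl\lVert\,\widetilde F\wedge\partial_1\widetilde F\wedge\cdots\wedge\partial_m\widetilde F\,\bigr\rVert^2}{(1+\lVert F\rVert^2)^{m+1}}
\]
for the determinant of the pulled-back Fubini--Study metric (for $m=1$ this is the Lagrange identity; in general it follows by writing $\alpha_{i\overline j}$ as the Gram matrix of the components of $\partial_1\widetilde F,\ldots,\partial_m\widetilde F$ orthogonal to $\widetilde F$, divided by $\lVert\widetilde F\rVert^2$). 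Writing $\Psi:=\lVert\widetilde F\wedge\partial_1\widetilde F\wedge\cdots\wedge\partial_m\widetilde F\rVert^2$ --- a sum of squares of moduli of holomorphic functions on $U$, positive there --- the two identities give $\Psi=\lambda\bigl((1+\lVert F\rVert^2)/(1+\lVert w\rVert^2)\bigr)^{m+1}$, and applying $\sqrt{-1}\,\partial\overline\partial\log$ yields
\[
(m+1)\bigl(\alpha-\omega_{\mathbb P^m}\bigr)=\sqrt{-1}\,\partial\overline\partial\log\Psi\ \ge\ 0,
\]
since $\log\Psi$ is plurisubharmonic. Hence $\alpha\ge\omega_{\mathbb P^m}$ pointwise, so each $\mu_j\ge1$; together with $\prod_j\mu_j=\lambda$ this gives $\lambda\ge1$, with equality if and only if $\mu_j\equiv1$, i.e. $F^*\omega_{\mathbb P^n}=\omega_{\mathbb P^m}$.

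It then remains to upgrade the two borderline cases to a global statement. If $\lambda=1$, the above shows $F$ is a germ of holomorphic isometry $(\mathbb P^m,\omega_{\mathbb P^m};x_0)\to(\mathbb P^n,\omega_{\mathbb P^n};F(x_0))$, and the global extension theorem of Calabi \cite{Ca53} --- applicable because $\mathbb P^m$ is simply connected with real-analytic Fubini--Study metric --- promotes it to a holomorphic isometry $\mathbb P^m\to\mathbb P^n$. If $n=m$, then by the first paragraph $F$ is a local biholomorphism, so its local inverse $G$ satisfies $G^*\omega_{\mathbb P^m}^m=\lambda^{-1}\omega_{\mathbb P^m}^m$; applying the bound $\lambda\ge1$ to both $F$ and $G$ forces $\lambda=1$, and we conclude as before. (Alternatively, when $n=m$ one may argue directly that ${\rm Ric}(U,\alpha)=-\sqrt{-1}\,\partial\overline\partial\log\det(\alpha_{i\overline j})=(m+1)\omega_{\mathbb P^m}$ by the determinant identity while ${\rm Ric}(U,\alpha)=F^*{\rm Ric}(\mathbb P^n,\omega_{\mathbb P^n})=(n+1)\alpha$ since $F$ is a local biholomorphism, whence $\alpha=\omega_{\mathbb P^m}$ and $\lambda=1$.)

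The part I expect to be most delicate is twofold. The positivity $\alpha\ge\omega_{\mathbb P^m}$, which is the heart of the argument, rests entirely on recognizing $\det(\alpha_{i\overline j})(1+\lVert F\rVert^2)^{m+1}$ as a constant multiple of a sum of squares of holomorphic functions, so the determinant/Wronskian formula above must be set up and justified carefully. Secondly, one must invoke Calabi's global extension in the correct form: a priori we only have a germ, and it is Calabi's resolvability (diastasis) machinery for real-analytic K\"ahler metrics on simply connected manifolds that converts the local isometry into a genuine map $\mathbb P^m\to\mathbb P^n$. Everything else --- the eigenvalue bookkeeping and the identity ${\rm Ric}=-\sqrt{-1}\,\partial\overline\partial\log\det$ --- is routine.
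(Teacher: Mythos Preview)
Your argument is correct and reaches the same core inequality $F^*\omega_{\mathbb P^n}\ge\omega_{\mathbb P^m}$ as the paper, but packages the key step differently. The paper obtains this inequality geometrically: from $F^*\omega_{\mathbb P^n}^m=\lambda\,\omega_{\mathbb P^m}^m$ one has ${\rm Ric}(U,F^*\omega_{\mathbb P^n})=(m+1)\omega_{\mathbb P^m}$, while the Gauss equation for the complex submanifold $F(U)\subset(\mathbb P^n,\omega_{\mathbb P^n})$ gives ${\rm Ric}(F(U),\omega_{\mathbb P^n}|_{F(U)})\le (m+1)\,\omega_{\mathbb P^n}|_{F(U)}$; pulling back yields $\omega_{\mathbb P^m}\le F^*\omega_{\mathbb P^n}$. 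Your route is the same inequality proved by hand: the Wronskian identity $\det(\alpha_{i\overline j})\,(1+\lVert F\rVert^2)^{m+1}=\lVert\widetilde F\wedge\partial_1\widetilde F\wedge\cdots\wedge\partial_m\widetilde F\rVert^2$ exhibits $\Psi$ as a sum of squared moduli of holomorphic minors, so $\sqrt{-1}\,\partial\overline\partial\log\Psi\ge 0$ is immediate. Unwinding, this plurisubharmonicity \emph{is} the Gauss inequality for Ricci in $\mathbb P^n$, so the two proofs coincide under the hood; yours is more self-contained, the paper's more conceptual. For $n=m$, the paper argues directly via ${\rm Ric}$ (your parenthetical alternative), whereas your primary argument---applying the bound $\lambda\ge 1$ to the local inverse $G$ to force $\lambda=1$---is a neat symmetry trick not in the paper. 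The endgame (equality in $\prod\mu_j=\lambda$ with $\mu_j\ge 1$, and Calabi's extension) is identical.
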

\begin{proof}
Note that $(\mathbb P^n, \omega_{\mathbb P^n})$ is of constant holomorphic sectional curvature $2$ and ${\rm Ric}(\mathbb P^m,\omega_{\mathbb P^m})=(m+1) \omega_{\mathbb P^m}$.
Let $U$ be an open neighborhood of $x_0$ where $F^*\omega_{\mathbb P^n}^m = \lambda \omega_{\mathbb P^m}^m$ holds. 
Then, we have
\[ F^*{\rm Ric}(F(U),\omega_{\mathbb P^n}|_{F(U)})
={\rm Ric}(U,F^*\omega_{\mathbb P^n})
={\rm Ric}(U,\omega_{\mathbb P^m}|_U)
= (m+1) \omega_{\mathbb P^m}|_U. \]
By the Gauss equation for the complex submanifold $(F(U),\omega_{\mathbb P^n}|_{F(U)}) \subset ({\mathbb P^n},\omega_{\mathbb P^n})$, we have
\[ {\rm Ric}(F(U),\omega_{\mathbb P^n}|_{F(U)}) \le (m+1) \omega_{\mathbb P^n}|_{F(U)} \]
(cf. \cite[p.\,177]{KN69}).
Therefore, we have
\begin{equation}\label{Eq:Pullback_Kahler_form}
(m+1) \omega_{\mathbb P^m}|_U \le (m+1) F^*\omega_{\mathbb P^n},
\end{equation}
so that $\omega_{\mathbb P^m}^m|_U \le  F^*\omega_{\mathbb P^n}^m = \lambda \omega_{\mathbb P^m}^m|_U$ and thus $\lambda \ge 1$.
If $\lambda=1$, then by $F^*\omega_{\mathbb P^n}-\omega_{\mathbb P^m}|_U \ge 0$ and $F^*\omega_{\mathbb P^n}^m - \omega_{\mathbb P^m}^m|_U = 0$, we must have $F^*\omega_{\mathbb P^n}-\omega_{\mathbb P^m}|_U=0$ so that $F$ is a local holomorphic isometry from $(\mathbb P^m,\omega_{\mathbb P^m})$ into $(\mathbb P^n,\omega_{\mathbb P^n})$.
It follows that $F$ extends to a totally geodesic holomorphic isometry from $(\mathbb P^m,\omega_{\mathbb P^m})$ into $(\mathbb P^n,\omega_{\mathbb P^n})$ by Calabi \cite{Ca53} and the Gauss equation.

Suppose $m=n$. Note that $F^*\omega_{\mathbb P^n}^n = \lambda \omega_{\mathbb P^n}^n$ is equivalent to
\[ \det\left((g_{\mathbb P^n})_{i\overline j}(F(w))\right)_{1\le i,j\le n} |\det JF(w)|^2 = \det\left((g_{\mathbb P^n})_{i\overline j}(F(w))\right)_{1\le i,j\le n} \]
in terms of the local holomorphic coordinates $w=(w_1,\ldots,w_n)$ around $x_0$ on $\mathbb P^n$.
Since ${\rm Ric}({\mathbb P^k},\omega_{\mathbb P^k})$ $=(k+1)\omega_{\mathbb P^k}$ for all integer $k\ge 1$, we apply $\partial\overline\partial\log$ on both sides of the above identity and obtain
\[ (n+1)F^*\omega_{\mathbb P^n}
=F^*{\rm Ric}({\mathbb P^n},\omega_{\mathbb P^n})
= {\rm Ric}(U,\omega_{\mathbb P^n}|_U)
= (n+1) \omega_{\mathbb P^n}|_U, \]
i.e., $F^*\omega_{\mathbb P^n}=\omega_{\mathbb P^n}$ on $U$, and $F$ is a local holomorphic isometry from $({\mathbb P^n},\omega_{\mathbb P^n})$ to $({\mathbb P^n},\omega_{\mathbb P^n})$.
Hence, by Calabi \cite{Ca53} $F$ extends to a (global) holomorphic isometry from $({\mathbb P^n},\omega_{\mathbb P^n})$ to $({\mathbb P^n},\omega_{\mathbb P^n})$, and thus $\lambda=1$.
\end{proof}

We observe the following metric rigidity theorem for global holomorphic maps from a compact K\"ahler manifold $(M,\omega_M)$ with the second Betti number $b_2(M)=1$ into a K\"ahler manifold $(N,\omega_N)$ of complex dimension $n$ preserving $(m,m)$-forms, where $m:=\dim_{\mathbb C}(M)\ge 1$.

\begin{pro}\label{pro:Global holo map_M_CptKah_Picard_number 1_to_N_mm forms}
Let $(M,\omega_M)$ be a compact K\"ahler manifold with the second Betti number $b_2(M)=1$ and $m:=\dim_{\mathbb C}(M)\ge 1$, and let $(N,\omega_N)$ be a K\"ahler manifold of complex dimension $n$.
If $F:M\to N$ is a global holomorphic map such that $F^*\omega_{N}^m = \lambda \omega_{M}^m$ for some real constant $\lambda>0$, then $F:(M,\lambda^{1\over m}\omega_{M})\to (N,\omega_{N})$ is a holomorphic isometry.
\end{pro}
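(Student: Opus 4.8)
The plan is to interpret $F^{*}\omega_{N}$ as a closed semipositive real $(1,1)$-form on the compact Kähler manifold $M$, use $b_{2}(M)=1$ to write it as $c\,\omega_{M}$ plus a global $\sqrt{-1}\,\partial\bar\partial$-potential, pin down the constant $c$ by integration, and then promote the resulting homogeneous Monge--Amp\`ere identity $F^{*}\omega_{N}^{m}=\lambda\,\omega_{M}^{m}$ to an honest equality of $(1,1)$-forms by the classical uniqueness argument.

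First I would note that since $F$ is holomorphic, $F^{*}\omega_{N}$ is a $d$-closed real $(1,1)$-form which is pointwise semipositive, because $F^{*}\omega_{N}(v,\overline v)=\omega_{N}(F_{*}v,\overline{F_{*}v})\ge 0$. As $H^{2}(M;\mathbb R)$ is one-dimensional and spanned by $[\omega_{M}]\neq 0$, we get $[F^{*}\omega_{N}]=c\,[\omega_{M}]$ for some $c\in\mathbb R$, so $F^{*}\omega_{N}-c\,\omega_{M}$ is exact; by the $\partial\bar\partial$-lemma on $M$ there is $\phi\in C^{\infty}(M;\mathbb R)$ with
\[ F^{*}\omega_{N}=c\,\omega_{M}+\sqrt{-1}\,\partial\bar\partial\phi . \]
Integrating $F^{*}\omega_{N}^{m}=\lambda\,\omega_{M}^{m}$ over $M$ and using $\int_{M}F^{*}\omega_{N}^{m}=[F^{*}\omega_{N}]^{m}\cdot[M]=c^{m}\int_{M}\omega_{M}^{m}$ gives $c^{m}=\lambda$; since $\lambda>0$ the form $F^{*}\omega_{N}$ is semipositive and not identically degenerate, so pairing with $\omega_{M}^{m-1}$ forces $c>0$, hence $c=\lambda^{1/m}$.

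Next, $F^{*}\omega_{N}^{m}=\lambda\,\omega_{M}^{m}$ is a strictly positive volume form at every point, so the semipositive form $F^{*}\omega_{N}$ is in fact positive definite everywhere. Thus $\eta_{1}:=F^{*}\omega_{N}$ and $\eta_{2}:=\lambda^{1/m}\omega_{M}$ are both Kähler forms on $M$ with $\eta_{1}-\eta_{2}=\sqrt{-1}\,\partial\bar\partial\phi$ and $\eta_{1}^{m}=\lambda\,\omega_{M}^{m}=\eta_{2}^{m}$. From
\[ 0=\eta_{1}^{m}-\eta_{2}^{m}=(\eta_{1}-\eta_{2})\wedge T=\sqrt{-1}\,\partial\bar\partial\phi\wedge T,\qquad T:=\sum_{k=0}^{m-1}\eta_{1}^{k}\wedge\eta_{2}^{\,m-1-k}, \]
where $T$ is a strictly positive $(m-1,m-1)$-form (a sum of wedge products of Kähler forms), I would multiply by $\phi$, integrate over $M$, and integrate by parts using $dT=0$ and Stokes' theorem to obtain
\[ 0=\int_{M}\phi\,\bigl(\sqrt{-1}\,\partial\bar\partial\phi\wedge T\bigr)=-\int_{M}\sqrt{-1}\,\partial\phi\wedge\bar\partial\phi\wedge T . \]
Since $\sqrt{-1}\,\partial\phi\wedge\bar\partial\phi\ge 0$ and $T>0$, the integrand is a nonnegative volume form, hence vanishes identically; strict positivity of $T$ then forces $\partial\phi\equiv 0$, so $\phi$ is constant. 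Therefore $F^{*}\omega_{N}=\lambda^{1/m}\omega_{M}$, i.e.\ $F\colon(M,\lambda^{1/m}\omega_{M})\to(N,\omega_{N})$ is a holomorphic isometry. (For $m=1$ the conclusion is immediate since $(1,1)=(m,m)$; the argument above applies uniformly for all $m\ge 1$.)

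The routine-but-essential point, and the part I expect to require the most care, is this last uniqueness step: keeping track of bidegrees so that the exact term genuinely integrates to zero, and the pointwise linear-algebra fact that wedging a nonzero semipositive $(1,1)$-form with a strictly positive $(m-1,m-1)$-form produces a strictly positive volume form (seen by simultaneously diagonalizing $\eta_{1}$ and $\eta_{2}$ at a point). The hypothesis $b_{2}(M)=1$ is exactly what makes the global potential $\phi$ available in the first place — without it one only controls $F^{*}\omega_{N}$ locally, and the pointwise rigidity of Proposition~\ref{thm:pp_form_to_Holo_iso} for $(p,p)$-forms with $p<m$ has no analogue when $p=m$.
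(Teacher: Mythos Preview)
Your proof is correct and follows essentially the same route as the paper's: use $b_{2}(M)=1$ and the $\partial\bar\partial$-lemma to write $F^{*}\omega_{N}=\lambda^{1/m}\omega_{M}+\sqrt{-1}\,\partial\bar\partial\phi$, factor $\eta_{1}^{m}-\eta_{2}^{m}$ as $\sqrt{-1}\,\partial\bar\partial\phi\wedge T$ with $T$ closed and strictly positive, and integrate by parts to force $\phi$ constant. You are in fact slightly more explicit than the paper in pinning down the constant $c=\lambda^{1/m}$ via the volume integral and the semipositivity of $F^{*}\omega_{N}$; the paper simply asserts the equality of cohomology classes, but the justification is precisely the one you give.
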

\begin{proof}
By $F^*\omega_{N}^m = \lambda \omega_{M}^m$, $F:M\to N$ is of rank $m$, i.e., $F$ is an immersion, and $(F^*\omega_{N})^m = (\lambda^{1\over m}\omega_{M})^m$ so that ${\rm Ric}(M,F^*\omega_{N}) = {\rm Ric}(M,\lambda^{1\over m}\omega_{M})$.
In addition, we have $F^*\omega_{N}\ge 0$.
Since $M$ has the second Betti number $b_2(M)=1$, we have $H^2(M,\mathbb R)\cong \mathbb R$, and thus we have the same cohomology class $[F^*\omega_{N}]=[\lambda^{1\over m}\omega_{M}]$ in $H^2(M,\mathbb R)$.
By the $\partial\overline\partial$-lemma we have
\[ F^*\omega_{N} = \lambda^{1\over m}\omega_{M} + \sqrt{-1}\partial\overline\partial \varphi \]
for some real smooth function $\varphi$ on $M$.
Writing $\omega:=F^*\omega_{N}$, we have
\[ 0 =\omega^m - (\lambda^{1\over m}\omega_{M})^m
= \sqrt{-1}\partial\overline\partial \varphi \wedge \left(\sum_{j=0}^{m-1} \omega^j \wedge (\lambda^{1\over m}\omega_{M})^{m-1-j}\right). \]
Denoting by $\Xi:=\sum_{j=0}^{m-1} \omega^j \wedge (\lambda^{1\over m}\omega_{M})^{m-1-j}$, we have $\Xi \ge \omega^{m-1}$ and $d\Xi=0$.
Therefore, we have
\[ d( \sqrt{-1}\varphi \cdot \overline\partial \varphi \wedge \Xi)
= \sqrt{-1} \partial \varphi \wedge \overline\partial \varphi \wedge \Xi
 + \varphi \cdot \sqrt{-1}\partial\overline\partial \varphi \wedge \Xi \]
and thus by Stokes' Theorem,
\[ \begin{split}
0 =& - \int_{M} \varphi \left(\omega^m - (\lambda^{1\over m}\omega_{M})^m\right)
= -\int_{M} \varphi \cdot \sqrt{-1}\partial\overline\partial \varphi \wedge \Xi
= \int_{M} \sqrt{-1}\partial \varphi \wedge \overline\partial \varphi \wedge \Xi\\
\ge & \int_{M} \sqrt{-1}\partial \varphi \wedge \overline\partial \varphi \wedge \omega^{m-1} 
= \int_{M} {1\over m}|\partial \varphi|_{\omega}^2 \cdot \omega^m \ge 0. 
\end{split}\]
Therefore, $\varphi$ must be a constant function and thus $F^*\omega_{N} = \lambda^{1\over m}\omega_{M}$, i.e., $F:(M,\lambda^{1\over m}\omega_{M})\to (N,\omega_{N})$ is a holomorphic isometry, as desired.
\end{proof}
\begin{rem}
Note that Proposition \ref{pro:Global holo map_M_CptKah_Picard_number 1_to_N_mm forms} also applies to all Fano manifolds $M$ of Picard number $1$ because $H^2(M,\mathbb Z)\cong {\rm Pic}(M)\cong \mathbb Z$.
Here, a Fano manifold $X$ is a projective manifold such that the anticanonical bundle $-K_X$ is ample.
For example, any irreducible Hermitian symmetric space of compact type is a Fano manifold of Picard number $1$.
As a consequence, if $F:\mathbb P^m \to \mathbb P^n$ is a {\rm(}global{\rm)} holomorphic map such that $F^*\omega_{\mathbb P^n}^m = \lambda \omega_{\mathbb P^m}^m$ for some real constant $\lambda>0$, and $n, m\ge 1$, then $F:(\mathbb P^m,\lambda^{1\over m}\omega_{\mathbb P^m})\to (\mathbb P^n,\omega_{\mathbb P^n})$ is a holomorphic isometry.
\end{rem}


\begin{thebibliography}{XX}
\bibitem{BR82} M. Barros and A. Romero, \emph{Indefinite K\"ahler manifolds}, Math. Ann. {\bf 261} (1982), 55--62.

\bibitem{Ca53} E. Calabi, \emph{Isometric imbedding of complex manifolds}, Ann. of Math. (2) {\bf 58} (1953), 1--23.

\bibitem{CXY17} S. T. Chan; M. Xiao and Y. Yuan, \emph{Holomorphic isometries between products of complex unit balls}, Internat. J. Math. {\bf 28} (2017), 1740010, 22 pp.

\bibitem{CDY17} X. Cheng; A. J. Di Scala and Y. Yuan, \emph{K\"ahler submanifolds and the Umehara algebra}, Internat. J. Math. {\bf 28} (2017), 1750027, 13 pp.

\bibitem{CU03} L. Clozel and E. Ullmo, \emph{Correspondances modulaires et mesures invariantes}, J. Reine Angew. Math. {\bf 558} (2003), 47--83.

\bibitem{De12} J.-P. Demailly, ``\emph{Complex analytic and differential geometry}", online book available on the author's webpage.
\url{https://www-fourier.ujf-grenoble.fr/~demailly/documents.html}

\bibitem{DL10} A. J. Di Scala and A. Loi, \emph{K\"ahler manifolds and their relatives}, Ann. Sc. Norm. Super. Pisa Cl. Sci. (5) {\bf 9} (2010), 495--501.

\bibitem{FHX20} H. Fang; X. Huang and M. Xiao, \emph{Volume-preserving mappings between Hermitian symmetric spaces of compact type}, Adv. Math. {\bf 360} (2020), 106885, 74 pp.

\bibitem{HY14} X. Huang and Y. Yuan, \emph{ Holomorphic isometry from a K\"ahler manifold into a product of complex projective manifolds}, Geom. Funct. Anal. {\bf 24} (2014), 854--886.

\bibitem{HY15} X. Huang and Y. Yuan, \emph{ Submanifolds of Hermitian symmetric spaces}, In: ``Analysis and geometry", Springer Proc. Math. Stat., 127, Springer, Cham, 2015, 197--206.

\bibitem{Hu00} D. Hulin, \emph{K\"ahler-Einstein metrics and projective embeddings}, J. Geom. Anal. {\bf 10} (2000), 525--528.

\bibitem{KN69} S. Kobayashi and K. Nomizu, ``\emph{Foundation of Differential Geometry, Vol. II}", Wiley-Inter science, New York, 1969.

\bibitem{LM21} A. Loi and R. Mossa, \emph{K\"ahler immersions of K\"ahler-Ricci solitons into definite or indefinite complex space forms}, Proc. Amer. Math. Soc. {\bf 149} (2021), 4931--4941.

\bibitem{LM22} A. Loi and R. Mossa, \emph{Holomorphic isometries into homogeneous bounded domains}, to appear in Proc. Amer. Math. Soc., \url{https://doi.org/10.1090/proc/16335}; arXiv:2205.11297.

\bibitem{LSZ21} A. Loi; F. Salis and F. Zuddas, \emph{Extremal K\"ahler Metrics Induced by Finite or Infinite-Dimensional Complex Space Forms}, J. Geom. Anal. {\bf 31} (2021), 7842--7865.

\bibitem{LZ18} A. Loi and M. Zedda, ``\emph{K\"ahler immersions of K\"ahler manifolds into complex space forms}", Lecture Notes of the Unione Matematica Italiana, 23. Springer, Cham; Unione Matematica Italiana, [Bologna], 2018.

\bibitem{Mo89} N. Mok, ``\emph{Metric rigidity theorems on Hermitian locally symmetric manifolds}", Series in Pure Mathematics, Vol. 6, World Scientific Publishing Co., Singapore; Teaneck, NJ, 1989.

\bibitem{Mo02} N. Mok, \emph{Local holomorphic isometric embeddings arising from correspondences in the rank-1 case}, In: ``Contemporary trends in algebraic geometry and algebraic topology (Tianjin, 2000)", Nankai Tracts Math. 5, World Sci. Publ., River Edge, NJ, 2002, 155--165.

\bibitem{Mo11} N. Mok, \emph{Geometry of holomorphic isometries and related maps between bounded domains}, In: ``Geometry and analysis. No. 2", Adv. Lect. Math. (ALM), 18, Int. Press, Somerville, MA, 2011, 225--270.

\bibitem{Mo12} N. Mok, \emph{Extension of germs of holomorphic isometries up to normalizing constants with respect to the Bergman metric}, J. Eur. Math. Soc. (JEMS) {\bf 14} (2012), 1617--1656.

\bibitem{Mo18} N. Mok, \emph{Some recent results on holomorphic isometries of the complex unit ball into bounded symmetric domains and related problems}, In: ``Geometric complex analysis", Springer Proc. Math. Stat., 246, Springer, Singapore, 2018, 269--290.

\bibitem{MN12} N. Mok and S. Ng, \emph{Germs of measure-preserving holomorphic maps from bounded symmetric domains to their Cartesian products}, J. Reine Angew. Math. {\bf 669} (2012), 47--73.

\bibitem{Um87} M. Umehara, \emph{Kaehler submanifolds of complex space forms}, Tokyo J. Math. {\bf 10} (1987), 203--214.

\bibitem{Um88} M. Umehara, \emph{Diastases and real analytic functions on complex manifolds}, J. Math. Soc. Japan {\bf 40} (1988), 519--539.

\bibitem{Yu17} Y. Yuan, \emph{On local holomorphic maps preserving invariant $(p,p)$-forms between bounded symmetric domains}, Math. Res. Lett. {\bf 24} (2017), 1875--1895.

\bibitem{Yu19} Y. Yuan, \emph{Local holomorphic isometries, old and new results}, In: ``Proceedings of the Seventh International Congress of Chinese Mathematicians. Vol. II", Adv. Lect. Math. (ALM), 44, Int. Press, Somerville, MA, 2019, 409--419.

\bibitem{YZ12} Y. Yuan and Y. Zhang, \emph{Rigidity for local holomorphic isometric embeddings from $\mathbb B^n$ into $\mathbb B^{N_1}\times\cdots \times \mathbb B^{N_n}$ up to conformal factors}, J. Differential Geom. {\bf 90} (2012), 329--349.

\bibitem{ZJ22} X. Zhang and D. Ji: \emph{Submanifolds of some Hartogs domain and the complex Euclidean space}, Complex Var. Elliptic Equ.; https://doi.org/10.1080/17476933.2022.2084538.
\end{thebibliography}
\end{document}